\documentclass[11pt]{article}
\usepackage{amsfonts}
\usepackage[square, numbers]{natbib}

\usepackage[left=0.8in,top=0.8in,right=0.8in,bottom=1.0in]{geometry}
\usepackage{graphics,epsfig}
\usepackage{color}
\usepackage{amsmath,amsthm,amssymb,amsfonts, mathrsfs, epsfig,latexsym,graphicx, enumerate}

\usepackage{macro}
\usepackage{appendix}

\newtheorem{theorem}{Theorem}[section]

\theoremstyle{definition}

\theoremstyle{remark}

\newtheorem{lemma}[theorem]{Lemma}

\newtheorem{proposition}[theorem]{Proposition}
\numberwithin{equation}{section}
\newtheorem{condition}[theorem]{Condition}

\def\O{\Omega}
\def\o{{\omega}}
\def\bA{{\bf A}}
\def\bB{{\bf  B}}

\def\l{\lambda}
\def\p{\partial}
\def\t{\times}
\def\e{\epsilon}
\def\s{^{\star}}
\def\f{{\mathbf f}}

\def\bu{\mathbf u}
\def\ubar{\bar{\mathbf u}}
\def\ue{{\bar{\mathbf u}}^{\epsilon}}

\def\bv{\mathbf v}
\def\bw{\mathbf w}
\def\bz{\mathbf z}
\def\by{\mathbf y}

\def\n{{\mathbf n}}
\def\a{\alpha}
\def\d{\delta}
\def\k{\kappa}
\def\th{\theta}
\def\E{\mathbb{E}}
\def\R{{\mathbb{R}}}

\def\sni{\smallskip\noindent}
\def\ni{\noindent}
\def\pr{^\prime}

\providecommand{\norm}[1]{\lVert#1\rVert}
\date{}
\begin{document}
\title{A Class of Stochastic Partial Differential Equations with Jumps in Fluid Dynamics: Large and Moderate Deviation Asymptotics}
\author{
\begin{tabular}{c@{\hspace{2cm}}c}
Arnab Ganguly\footnote{\textit{Email:} aganguly@math.lsu.edu. Research supported in part by NSF grants DMS-1855788 and DMS-2246815, and by the Simons Foundation (Travel Support for Mathematicians program). } &
Padmanabhan Sundar\footnote{\textit{Email:} psundar@lsu.edu.}
\end{tabular}\\[1ex]
Department of Mathematics\\
Louisiana State University
}

\maketitle
\begin{abstract}
We establish large and moderate deviation principles for a broad class of nonlinear stochastic partial differential equations driven by multiplicative Poisson random measures in the small-noise regime. Our abstract framework is formulated for locally monotone evolution equations over a Hilbert triple and encompasses several important models arising in fluid dynamics and turbulence theory, including the two-dimensional Navier--Stokes equations, magnetohydrodynamics (MHD), the GOY shell model of turbulence, the two-dimensional tidal equations, and Navier--Stokes equations with nonlinear viscosities. A key analytical ingredient is the well-posedness of the associated controlled equations, established using the method of local monotonicity. The large and moderate deviation principles are then proved via the weak convergence approach.

\end{abstract}

\textit{Keywords}: Stochastic partial differential equations, fluid dynamics, Poisson random measures, local monotonicity, large deviations,  moderate deviations, variational representation

\textit{MSC 2020 Classification:} 60H15, 60F10, 35Q30, 76D05, 76W05

\section{Introduction}
\setcounter{equation}{0}
The asymptotic analysis of stochastic systems under small random perturbations has been a central topic in probability theory for several decades. In many applications, deterministic models capture only the average behavior of the underlying physical system, while random perturbations, though small, may produce rare but significant events. Quantifying the probabilities of such events is the objective of the theories of large and moderate deviations. In the context of stochastic partial differential equations (SPDEs), these questions are particularly important for mathematical models arising in fluid dynamics, turbulence, climate science, and other infinite-dimensional dynamical systems, where random perturbations naturally occur and rare events often correspond to physically meaningful phenomena.

Large deviation principles (LDPs) provide precise exponential estimates for the probabilities of deviations of stochastic trajectories from their deterministic limits. Such estimates play an important role in the study of metastability, exit problems, stochastic control, uncertainty quantification, and rare-event simulation. Moderate deviation principles (MDPs), on the other hand, describe deviations on scales intermediate between the central limit theorem (CLT) and the large deviation regime. Consequently, they furnish a refined asymptotic theory that captures fluctuations beyond the Gaussian approximation while retaining the tractable quadratic structure of the associated rate function. MDPs have found applications in asymptotic statistics, hypothesis testing, importance sampling, stochastic optimization, and statistical inference for stochastic systems.

In the present work, we establish both large and moderate deviation principles for a broad class of nonlinear SPDEs driven by multiplicative jump noise modeled by compensated Poisson random measures (PRMs). Our framework encompasses several important equations arising in fluid dynamics and turbulence theory, including the two-dimensional Navier--Stokes equations, magnetohydrodynamics (MHD), the GOY shell model of turbulence, the two-dimensional tidal equations, and related locally monotone systems.  The results provide a unified treatment of both large and moderate deviations for these models within a common abstract framework and, in particular, cover several equations for which such results are not currently available in the literature, including Navier-Stokes equations with nonlinear viscosities.


The theory of large deviations has a rich history, beginning with the pioneering works of Cram\'er \cite{Cr}, Freidlin and Wentzell \cite{FW}, Donsker and Varadhan \cite{DV75I, DV75II, Va} (see also \cite{DeZe10} for a general introduction to the subject). In the presence of small noise perturbations, finite-dimensional stochastic differential equations driven by Brownian motion of the form
\begin{equation}
dX^{\e}(t)=b(X^{\e}(t))\,dt+\sqrt{\e}\sigma(X^{\e}(t))\,dW(t),
\label{syst1}
\end{equation}
with $X^{\e}(0)=x_{\e}$, were studied extensively by Freidlin and Wentzell under suitable assumptions on the coefficients $b$ and $\sigma$. If $x_{\e}\to x_0$ as $\e\to0$, then $X^{\e}$ converges in probability to the solution $X^0$ of the deterministic ordinary differential equation
$
\frac{dX^0}{dt}=b(X^0(t)),  
$
with $X^{0}(0)=x_0$.
The Freidlin-Wentzell theory provides sharp exponential estimates for the probabilities that the stochastic trajectories deviate from their deterministic counterpart. Since its inception, the theory has been extended in numerous directions, including infinite-dimensional systems, stochastic evolution equations, delay equations, and systems driven by jump noise.

While the theory of LDP has been extensively developed over the past several decades, MDPs are of comparatively more recent origin. Moderate deviations for diffusion processes and, more generally, Markov processes were first investigated in \cite{LS, Gu, HS}. For a stochastic system $X^\ep$ (e.g. \eqref{syst1}) converging to a deterministic limit $X^0$, the MDP concerns the asymptotic behavior of the scaled process
$$
Y^{\e}(t)=\frac{X^{\e}(t)-X^0(t)}{a(\e)}, \qquad t\in[0,T],
$$
where the scaling function $a(\e)$ satisfies
$
\sqrt{\e}\ll a(\e)\ll1.
$
The choice $a(\e)=\sqrt{\e}$ corresponds to the central limit theorem, whereas $a(\e)=1$ yields the standard large deviation regime. Intermediate scalings, for example $a(\e)=\e^{1/3}$, lead to MDP, where the goal is to show that for a suitable rate function $I$,
$$\mathbb{P}(Y^\ep \in A) \approx e^{-\inf_{y \in A} I(y) / b(\ep)}.$$
Here
$
b(\e)\equiv \ep / a^2(\e),
$
and we refer to $1/b(\e)$ as the speed of the MDP.

A major advance in the modern theory of large deviations was the development of the variational representation and weak convergence approach by Budhiraja, Dupuis, and Ellis \cite{DE, BuDu19}. Classical proofs of LDPs often rely on discretization or approximation arguments together with the contraction principle. In contrast, the weak convergence approach exploits variational representations arising from stochastic control to establish the Laplace principle directly. For random elements taking values in Polish spaces, the Laplace principle is equivalent to the large deviation principle. Moreover, the same variational framework has proved equally effective in the study of moderate deviations.

The weak convergence approach provides a general set of sufficient conditions for establishing the Laplace principle for families of random elements taking values in Polish spaces. Since its introduction in the context of stochastic Navier-Stokes equation by by Sritharan and Sundar \cite{SS}, it has led to a substantial body of work on large deviation principles for infinite-dimensional stochastic systems driven by both Brownian and Poisson noise; see, for example, \cite{RZ, BDM08, BDM09, MSS, XZ, BCD, CD} for a short representative sample of this literature.
The corresponding weak convergence framework for moderate deviations of infinite-dimensional stochastic systems driven by Poisson random measures (PRMs) was developed by Budhiraja, Dupuis, and Ganguly \cite{BDG}. Compared with large deviations, moderate deviations require a different asymptotic scaling and substantially different estimates, and have consequently received much less attention in the literature particularly in the nonlinear settings. Among the few available results in this area are the works of Dong \emph{et al.} \cite{DXZ} and Wang \emph{et al.} \cite{WZZ}, which establish moderate deviation principles for the two-dimensional Navier--Stokes equations under suitable assumptions.

The present work employs the weak convergence approach to provide a unified framework for establishing both large and moderate deviation principles for nonlinear SPDEs driven by compensated PRMs. The equations are formulated in an abstract evolution setting over a Hilbert triple, allowing a unified treatment of a broad class of locally monotone systems arising in fluid dynamics, including the two-dimensional Navier--Stokes equations, MHD equations, Boussinesq systems, tidal equations, the GOY shell model of turbulence, and Navier-Stokes equations with nonlinear viscosities. A key analytical ingredient is the well-posedness of the associated controlled equations. We establish existence and uniqueness of these controlled systems by the method of local monotonicity, which provides the foundation for verifying the hypotheses of the weak convergence approach and thereby proving both the large and moderate deviation principles. The LDP and MDP results are presented in Theorem \ref{th:ldpu} and Theorem \ref{th:mdpu}.

From the viewpoint of fluid dynamics, rare events are closely related to the onset of extreme phenomena and the transition to turbulent behavior. One example is the occurrence of exceptionally large vorticity over a finite time interval,
\[
\left\{\omega:\sup_{0\le t\le T}\|\mathbf{u}(t,\omega)\|_V\ge n\right\},
\]
for large values of $n$. It is well known (cf.~\cite{BM2}) that the growth of the vorticity plays a fundamental role in the possible breakdown of smooth solutions to the three-dimensional Euler equations. Motivated by the work of Birnir \cite{Bi}, we model the random perturbations using jump noise. Although Brownian perturbations can be incorporated into our framework with relatively minor modifications, doing so introduces additional technicalities without yielding essentially new mathematical ideas.

Finally, we remark that while the controlled equations are naturally analyzed in a Lusin space, our main large and moderate deviation results are formulated under a Polish-space topology so that the Laplace principle is equivalent to the corresponding large or moderate deviation principle. The remainder of the paper is organized as follows. Section~2 introduces the abstract framework together with the formulation of the LDP and MDP. In Section~3 we establish the existence and uniqueness of solutions to the controlled SPDEs using the method of local monotonicity. Sections~4 and~5 are devoted to the proofs of the LDP and the MDP, respectively, via the weak convergence approach. The appendix contains several examples of nonlinear PDEs which when perturbed by a small jump noise 
(driven by a compensated PRM) fall within the scope of our general framework.

\section{Mathematical Framework}
\setcounter{equation}{0}

In this work, LDP and MDP will be established for a class of nonlinear stochastic differential equations driven by Poisson random measures. We present an abstract framework in order to study a class of stochastic partial differential equations  (SPDEs) that arise in fluid dynamics. 

 Suppose that $H$ and $V$    are two real separable Hilbert spaces such that the embedding $V \subset_{\to} H$ is  dense and compact. It enables one to write the following Hilbert triple (cf. Brezis \cite{Br})
\[
V \subset_{\to} H \simeq H^{*} \subset_{\to} V^{\prime}
\]
where $V\pr$ is the dual of $V$. 
We introduce an unbounded self-adjoint operator $\bA$  on $H$,  and a bilinear operator $\bB: V \times V \to V\pr$  which satisfy the conditions listed below. We will use the notation $| \cdot |$ and $(\cdot , \cdot)$ for the norm and inner 
product on the space $H$, and $\norm{ \cdot }$ and $((\cdot, \cdot))$ for the norm and inner 
product on the space $V$. The dual pairing between $V^{\prime}$ and $ V$ will be denoted by the sharp bracket $\langle\,,\,\rangle$. 

{\bf Model:} Consider a stochastic basis $(\Omega, \mathcal{F}, \{\mathcal{F}_t: t \in [0, T]\}, P)$ with the filtration 
$\{\mathcal{F}_t: t \in [0, T]\}$ satisfying the usual hypotheses.  Let $\mathbb Z$ be a locally compact Polish space, and for each $\ep>0$, let $N^{1/\e}$ be an $\mathcal{F}_t$-adapted  PRM defined on $\mathbb Z \times [0, T]$ with compensator measure $\e^{-1}\nu(dz) ds$ (cf. Appendix B).  We assume that the measure $\nu$ on $(\mathbb Z, \mathcal{B}(\mathbb Z))$ is Radon. Denoting the 
compensated PRM by $\tilde{N}$,   for each $\ep>0$, we consider a general class of SPDEs,
which in  integral form are given by 
\begin{align}
{\bu^{\e}}(t)  + \int_0^t \{\bA \bu^{\e} (s) + \bB(\bu (s))\}ds = {\bu}(0) + \int_0^t {\f}(s)  + {\e}\int_{\mathbb{Z}\times[0,t]}  g(s, {\bu}^{\e}(s), z)\,\tilde{N}^{\e^{-1}}(ds, dz). \label{epsiloneqn}
\end{align}
where $\bu(0)$ is in $H$, and $f \in L^2([0, T]; V\pr)$. Our primary goal in this paper is to analyze the asymptotics of deviation of $u^\ep$ from its limit $u^0$ given by 
\begin{align}\label{eq:limiteqn}
d\bu^0(t) + \big[\bA\bu^0(t) + \bB(\bu^0(t), \bu^0(t))\big] dt = \f(t) dt.
\end{align}

\smallskip
\noindent
{\bf Hypotheses H:}
\begin{enumerate}
\item[{\bf H.1}] The map $\bA: V \to V\pr$ is a linear operator such that for all $u, v \in V$, $\langle \bA u, v \rangle =  ((u, v))$. The operator $\mathbf{A}$ has eigenvalues
 $0<\lambda_{1}<\lambda_{2}\leq\cdots$ with the corresponding
eigenfunctions denoted by $e_{1},e_{2},\cdots$. The eigenfunctions form a complete
orthonormal system for ${H}$. 

\item[{\bf H.2}] The map $\bB: V \times V \to V\pr$ is a bilinear operator, and there exists a constant $c_1 > 0$ such that 
\[
|\bB (u, v)|^2_{V\pr} \le c_1 |u|\,\norm{u}\,|v| \,\norm{v}\,\,\text{for all}\,\, u, v \in V.
\]
\item[{\bf H.3}] The operator $\bB$ satisfies the condition 
\[ \langle \bB(u, v), w \rangle = - \langle \bB(u, w), v\rangle \,\,\text{for all}\,\, u, v, w \in V.
\]
\item[{\bf H.4}] Let $\bB(u, u)$ be denoted by $\bB(u)$. For every $r > 0$, there exists a constant $L_r$ such that 
\[
|\bB(u) - \bB(v)|_{V\pr} \le L_r\norm{u - v}\,\,\text{for all} \,\,u, v \in V,\,\,\text{and} \,\,\norm{u}, \norm{v} \le r.
\]
 \item[\bf H.5] The function $g$, called the noise coefficient,  maps $ [0, T]\times H \times \mathbb Z \to H$ and is
  jointly measurable. In addition, there exists an $L^2(\nu)$ function  $M_g(z) > 0$ such that 
\begin{equation}
 |g(t, u, z)| \le M_g(z) (1 + |u|) \,\,\text{for all} \,\,t \in [0, T]\,\, u \in H, \,\text{and}\,\, z \in \mathbb Z. \label{growth}
\end{equation}
\item[{\bf H.6}] For all $u_1, u_2 \in H$, and $t \in [0, T]$, there exists an $L^2(\nu)$ function $L_g(z) > 0$ such that 
\begin{equation}
 |g(t, u_1, z) - g(t, u_2, z)|  \le L_g(z) |u_1 - u_2| \,\,\text{for all}\,\, z \in \mathbb Z.\label{lip}
\end{equation}
\item[{\bf H.7}]  (Local Monotonicity) Define  a nonlinear operator $F$ on $V$ by
$F(\bu):= -\bA u - \bB(u) $.  There exists a weaker norm $\norm{\cdot}_0$  on $V$ (i.e.,  there exists a constant $C_0$ such that $\norm{\cdot}_0 \le C_0 \norm{\cdot}_V$),  a non-negative continuous function $h$, and an $\e > 0$ such that 
 for any $r > 0$, $\bu\in V$ and $\bv \in B_r$, viz. the $r$-ball in $(V, \norm{\cdot}_0)$,   such that 
 the pair $(F, \e g)$
is monotone, i.e.,
\begin{equation}
\langle F(\bu) - F(\bv), \bw \rangle  - h(r)\,|\bw|^2 + \e\int_{\mathbb Z}  |g(t, \bu, z) - g(t, \bv, z)|^2\nu(dz) \le 0\label{mono}
\end{equation}
for all $t \in [0, T]$, where $\bw :=  \bu - \bv$. 
\item[{\bf H.8}] There exists a $\d > 0$ such that for all $B \in \mathcal{B}(\mathbb Z)$ satisfying $\nu(B) < \infty$ the following holds:
\[
 \int_B \exp\{\d (M_g^2(z) + L_g^2(z))\} \nu(dz) < \infty.
\]
\end{enumerate}
{\sc Notation:} Throughout the paper, we use the notation  $M_k := \norm{M_g}_{L^k(\nu)}$, and $L_k := \norm{L_g}_{L^k(\nu)}$ for any $k \ge 1$, whenever they are finite.


{\bf Large and moderate deviations and Laplace Principle:} Let $\{Z^\ep\}$ be a family of random variables taking values in a Polish space $\mathbb{U}$. $\{Z^\ep\}$ satisfies an LDP with rate function $I: \mathbb U \rt [0,\infty]$ and speed $1/b(\ep)$, where $b(\ep) \stackrel{\ep \rt 0}\Rt 0$, if for any Borel set $A \subset \mathbb{U}$
\begin{equation}
\label{eq:ldp_exponential_rates:old}
\begin{aligned}	
	-I(A^\circ)~& \leq \liminf_{\ep\to 0}~b(\ep) \log \mathbb{P}\lf( Z^\ep \in A \ri)  \leq \limsup_{\ep\to 0}~b(\ep) \log \mathbb{P}\lf( Z^\ep \in A \ri) \leq -  I(\bar{A}).
\end{aligned}
\end{equation}
Here for $A \subset \mathbb{U}$, $A^\circ$ denotes its interior, $\bar A$ its closure, and $I(A) = \inf_{u \in A} I(u)$.

The LDP is equivalent to the {\bf Laplace principle} which is said to hold for $\{Z^\ep\}$ with rate function $I: \mathbb S \rt [0,\infty]$ and speed $1/b(\ep)$ if for any $h \in C_b(\mathbb{U}, \R)$
 \begin{equation}
 \lim_{\e \to 0} b(\e)\log\mathbb{E}\lf[\exp\lf(-h(Z^{\e}) / b(\e)\ri)\ri] = - \inf_{u \in \mathbb{S}}\{h(u) + I(u)\},\label{introineq1}
 \end{equation}

Suppose now $\mathbb{U}$ is also a vector-space, $X^\ep$ are $\mathbb{U}$-valued random variables with  $X^\ep \stackrel{\mathbb{P}}\Rt X$ where the limit $X$ is deterministic. Then establishing MDP of $X^\ep$ with scaling factor $\sqrt \ep \ll a(\ep) \ll 1$ refers to proving that an LDP holds for $Z^\ep \equiv (X^\ep-X)/a(\ep)$ with speed $1/b(\ep) \equiv a^2(\ep)/ \ep$ and a rate function $I$.

 We now state the main results of the paper.

{\bf LDP of $\bu^\ep$:} 
 Define $\ell :[0,\infty )\rightarrow \lbrack 0,\infty )$ by 
\begin{equation}
\ell (r)=r\log r-r+1,\quad r\in \lbrack 0,\infty ).  \label{eq:eq121}
\end{equation}%
For $\varphi: \mathbb Z \times [0,T] \rt [0,\infty)$, define
\begin{align}\label{Ltdef}
    L_t(\vphi) \doteq \int_{\mathbb Z \times [0,t]} \ell(\phi(z,s)) \nu(dz) ds
\end{align}
when $\ell\circ \vphi$ is integrable with respect to $\nu(du) ds$ and $\infty$ otherwise. Define 
\begin{align}
    \label{eq:defS}
    \mathbb{S} := \{\varphi: \mathbb{Z}\times [0,T] \to [0, \infty): L_T(\varphi) < \infty\}.
\end{align}

\begin{theorem}\label{th:ldpu}
 Let $\bu^\ep$ be the solution to \eqref{epsiloneqn}  with $\bu^\ep(0) = \xi \in H$. Then $\{\bu^\ep\}$ satisfies an  LDP in  
$D([0, T]; V^{\prime})$ with rate function $I^L: D([0, T]; V^{\prime}) \rt [0,\infty] $ given by
\begin{align}\label{rate-ldp}
I^{\mathrm{LD}}(\ubar) = \inf_{\varphi \in \mathbb{S}^{\mathrm{LD}}_{\ubar}} L_T(\varphi)
\end{align}
where  $\mathbb{S}^{\mathrm{LD}}_{\ubar} \subset \mathbb{S}$ is the class of all controls $\vphi \in \mathbb{S}$ such that
\begin{align*}
d\ubar(t) + [\ A\ubar(t) + B(\ubar(t), \ubar(t))] dt = [\f(t) + \int_{\mathbb Z} g(t, \ubar(t), z)(\vphi(t,z) - 1) \nu(dz)]dt,
\end{align*}
with the functional $L_T$ defined in \eqref{Ltdef}. 
\end{theorem}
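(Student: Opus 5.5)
The plan is to apply the weak convergence approach for Poisson random measures of Budhiraja, Dupuis and Maroulas \cite{BDM08, BDM09}. By the Laplace principle–LDP equivalence in Polish spaces, it suffices to verify the sufficient condition in \cite{BDM09}. First I record unique strong solvability of \eqref{epsiloneqn} (by local monotonicity, Section~3). The Yamada--Watanabe argument then gives a measurable map $\mathcal G^\ep$ with $\bu^\ep=\mathcal G^\ep(\e N^{\e^{-1}})$. Next I define $\mathcal G^0(\vphi)$ as the unique solution $\ubar$ of the controlled skeleton equation in the statement, with $\ubar(0)=\xi$. With this map, $I^{\mathrm{LD}}(\ubar)=\inf\{L_T(\vphi):\ubar=\mathcal G^0(\vphi)\}$ is exactly \eqref{rate-ldp}. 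Let $S^M=\{\vphi\in\mathbb S: L_T(\vphi)\le M\}$, viewed as a compact metrizable space under the vague topology on the corresponding measures. Two conditions must then be checked: (a) for each $M$, if $\vphi_n\to\vphi$ in $S^M$ then $\mathcal G^0(\vphi_n)\to\mathcal G^0(\vphi)$ in $D([0,T];V')$; (b) if $\vphi_\ep$ are predictable $S^M$-valued controls converging in distribution to $\vphi$, then $\mathcal G^\ep(\e N^{\e^{-1}\vphi_\ep})\Rightarrow \mathcal G^0(\vphi)$.

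Well-posedness of the skeleton equation comes first. The key tool is the elementary inequality $\int_{\mathbb Z\times[0,T]} M_g(z)|\vphi-1|\,\nu(dz)ds\le C(M)$ uniformly on $S^M$, which follows from H.8 together with the superlinear growth of $\ell$ (split into $\{\vphi\le\beta\}$ and $\{\vphi>\beta\}$ and use $ab\le e^{\d a^2}+\ell(b)/\d$-type bounds). The same estimate holds for $L_g$. The control term is then $\int_{\mathbb Z}g(t,\ubar,z)(\vphi-1)\nu(dz)=:\psi(t)\,(1+|\ubar|)$-bounded, with $\psi\in L^1[0,T]$. Testing the equation with $\ubar$ and using H.1, H.3, Gronwall gives a priori bounds $\sup_t|\ubar(t)|^2+\int_0^T\|\ubar\|^2ds\le C(M,\xi,\f)$. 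Existence follows by Galerkin approximation plus the local monotonicity H.7, following the Minty-type argument of Section~3 with the $L^1$-in-time Lipschitz control term absorbed via Gronwall. Uniqueness follows from H.6 and the monotonicity estimate with the weight $\exp(-\int_0^t h(\|\bv\|_0)+2\psi_L ds)$.

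For (a), the uniform energy bounds give tightness of $\ubar_n=\mathcal G^0(\vphi_n)$ in $L^2(0,T;H)\cap C([0,T];V')$, via compactness of $V\subset H$ and equicontinuity in $V'$ from H.2. Passing to the limit in $\int g(s,\ubar_n,z)(\vphi_n-1)\nu(dz)ds$ uses H.6, strong $L^2(H)$ convergence, and the uniform integrability furnished by H.8. Identification of the nonlinear term uses H.4 and H.2, and uniqueness then pins the limit as $\mathcal G^0(\vphi)$.

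For (b), Girsanov's theorem shows $\bv^\ep=\mathcal G^\ep(\e N^{\e^{-1}\vphi_\ep})$ solves \eqref{epsiloneqn} with the extra drift $\int g(s,\bv^\ep,z)(\vphi_\ep-1)\nu(dz)$ and compensated noise $\e\tilde N^{\e^{-1}\vphi_\ep}$. Itô's formula with a stopping argument gives $\sup_\ep E[\sup_t|\bv^\ep|^2+\int_0^T\|\bv^\ep\|^2]<\infty$. The martingale term has quadratic variation of order $\e\int M_g^2\vphi_\ep(1+|\bv^\ep|)^2$, which tends to $0$, again by the $\ell$-superlinearity estimate. By Skorokhod representation, we then compare $\bv^\ep$ with $\ubar^\ep=\mathcal G^0(\vphi_\ep)$ and estimate $\bw=\bv^\ep-\ubar^\ep$ pathwise with the exponential weight from H.7. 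I expect this step to be the main obstacle: the local monotonicity constant $h(r)$ depends on $\|\ubar^\ep\|_0$, which is only $L^2$ in time, and the jump martingale must be controlled inside the weighted Itô estimate. Using (a) and continuity in $\vphi$, the conclusion is $\bv^\ep-\ubar^\ep\to0$ in probability in $D([0,T];V')$ (indeed in $L^\infty(H)$), and hence $\bv^\ep\Rightarrow\mathcal G^0(\vphi)$.
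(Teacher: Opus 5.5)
Your proposal is correct. For well-posedness of the skeleton and for condition (a) it follows the paper (the skeleton well-posedness lemma and Theorem \ref{ThmLDP1}): uniform bounds over $S^M$, compactness in $L^2(0,T;H)$, passage to the limit in the control drift via H.6 and the H.8-based estimates of Lemmas \ref{MLlemma} and \ref{hlemma}, and identification by uniqueness. One small point: equicontinuity in $V'$ also needs part (iii) of Lemma \ref{MLlemma} for the control drift, not only H.2.

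\textbf{Where you differ: condition (b).} The paper's route is as follows:
\begin{enumerate}
\item[1.] It writes the controlled solution as $\bw^{\epsilon}+\bv^{\epsilon}$, where $\bv^{\epsilon}$ is the linear stochastic convolution $d\bv^{\epsilon}+\bA\bv^{\epsilon}dt=\epsilon\int_{\mathbb Z} g\,\tilde N^{\epsilon^{-1}\varphi_\epsilon}(dt,dz)$.
\item[2.] It proves tightness in $D([0,T];V')$ by the Aldous criterion.
\item[3.] It passes to a Skorokhod representation.
\item[4.] It reruns the compactness and identification argument of Theorem \ref{ThmLDP1} pathwise for the random PDE satisfied by $\bw^{\epsilon}$.
\end{enumerate}
You instead compare, on the original space, the controlled solution with the skeleton $\mathcal G^0(\varphi_\epsilon)$ driven by the same random control. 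A weighted It\^o estimate gives $E\sup_t|\cdot|^2=O(\sqrt{\epsilon})$. You then transfer $\varphi_\epsilon\Rightarrow\varphi$ through the continuity of $\mathcal G^0$ on the compact space $S^M$ (continuous mapping theorem).

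Your route is shorter and quantitative. It makes the Aldous step, the re-identification of the limit and Skorokhod unnecessary. You can drop your Skorokhod sentence, which also spares you re-deriving the equation for the represented processes on the new space. In exchange you need two things. First, $\mathcal G^0(\varphi_\epsilon)$ must be adapted. It is, because its value at time $t$ depends only on $\varphi_\epsilon$ on $[0,t]$, and (a) on $[0,t]$ gives measurability. Second, the exponential weight must be controlled.

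\textbf{The obstacle you flag.} Do not use H.7 there. For a general continuous $h$ there is no reason for $\int_0^T h(\|\bar{\mathbf u}(s)\|_0)\,ds$ to be finite. Use H.2--H.3 instead. With $\bw=\bu-\bv$, H.3 gives $\langle\bB(\bu)-\bB(\bv),\bw\rangle=\langle\bB(\bw,\bv),\bw\rangle$, and H.2 with Young's inequality gives
\[
|\langle\bB(\bw,\bv),\bw\rangle|\le \tfrac12\|\bw\|^2+C|\bw|^2|\bv|^2\|\bv\|^2 .
\]
Take $\bv=\mathcal G^0(\varphi_\epsilon)$. By \eqref{sconteqnest1} and Lemma \ref{MLlemma}(ii), the weight $\exp\{-\int_0^t(2C|\bv|^2\|\bv\|^2+2\int_{\mathbb Z}L_g|\varphi_\epsilon-1|\,\nu(dz))\,ds\}$ is bounded below by a deterministic constant, uniformly in $\epsilon$. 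The same bound also yields uniqueness for the skeleton equation. The jump martingale and the $\epsilon\int|g|^2\varphi_\epsilon\,d\nu$ term are then handled by BDG and the uniform moment bounds, as in Lemma \ref{scontrol}.
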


{\bf MDP of $\bu^\ep$:} For $0 \le t \le T$ and $\e > 0$, define the process 
\begin{equation}\label{yepsilon}
Y^{\e}(t) = (\bu^\ep(t) - \bu^0(t)) / a(\e), 
\end{equation}
where $\bu^0$ is given by \eqref{eq:limiteqn}.

\begin{theorem}\label{th:mdpu}
The family $\{\bu^\ep\}$ satisfies a MDP with scaling factor $a(\ep)$ in  
$D([0, T]; V^{\prime})$ with speed $1/b(\e) \equiv a^2(\ep)/\ep$  and rate function $I^M: D([0, T]; V^{\prime}) \to [0,\infty]$ defined by
\begin{align*}
    I^\mathrm{MD}(\bar y) = \frac{1}{2}\inf_{\psi \in \mathbb S^{\mathrm{MD}}_{\bar\by}}\int_{\mathbb Z \times [0,T]} |\psi(z,t)|^2 dz ds,
\end{align*}
where for $\bar \by \in D([0, T]; V^{\prime})$, $\mathbb{S}^{\mathrm{MD}}_{\bar \by}$ is the set of all controls $\psi \in L^2(\mathbb{Z}\times [0,T], \nu\ot \leb)$ such that 
\begin{equation}
d\bar \by(t) + \bA \bar \by(t)dt + \left[ \bB(\bar \by(t), \bu^0) + \bB(\bu^0, \bar \by(t))\right]dt =   \int_{\mathbb Z} g(t, \bu^0(t), z)\psi(t, z) \nu(dz)dt. \label{uzeroeqn}
\end{equation}
In other words, $\{Y^\ep\}$, defined by \eqref{yepsilon}, satisfies an LDP in  
$D([0, T]; V^{\prime})$ with speed $1/b(\e)$  and the rate function $I^{\mathrm{MD}}: D([0, T]; V^{\prime}) \to [0,\infty]$.
\end{theorem}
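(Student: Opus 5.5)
We prove Theorem \ref{th:mdpu} with the weak convergence framework for moderate deviations of PRM-driven systems of Budhiraja, Dupuis and Ganguly \cite{BDG}. Write $Y^\e=\mathcal G^\e(\e N^{\e^{-1}})$ for a measurable solution map $\mathcal G^\e$; this is justified by strong well-posedness of \eqref{epsiloneqn} and Yamada--Watanabe. Let $\mathcal G^0:L^2(\nu\otimes\mathrm{Leb})\to D([0,T];V')$ send $\psi$ to the unique solution $\bar\by$ of \eqref{uzeroeqn} with $\bar\by(0)=0$. Then $I^{\mathrm{MD}}(\bar\by)=\frac12\inf\{\|\psi\|^2_{L^2}:\bar\by=\mathcal G^0(\psi)\}$, and by \cite{BDG} it suffices to verify two conditions.

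\textbf{(a)} For every $M$, the set $\{\mathcal G^0(\psi):\|\psi\|_{L^2}\le M\}$ is compact.

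\textbf{(b)} Take controls $\varphi^\e=1+a(\e)\psi^\e$ with $\psi^\e\mathbf 1_{\{|\psi^\e|\le \beta/a(\e)\}}$ in the $L^2$-ball of radius $M\sqrt{\kappa}$ and $L_T(\varphi^\e)\le Ma^2(\e)$. If $\psi^\e\mathbf 1_{\{|\psi^\e|\le\beta/a(\e)\}}\Rightarrow\psi$ weakly in $L^2$, then $\mathcal G^\e(\e N^{\e^{-1}\varphi^\e})\Rightarrow\mathcal G^0(\psi)$.

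\emph{Linear skeleton.} Equation \eqref{uzeroeqn} is linear in $\bar\by$, with a forcing that is linear in $\psi$. By H.5 and Cauchy--Schwarz,
\[
\Big|\int g(t,\bu^0,z)\psi\,\nu(dz)\Big|\le (1+|\bu^0(t)|)M_2\|\psi(t,\cdot)\|_{L^2(\nu)} ,
\]
and this is in $L^2(0,T;H)$. Testing with $\bar\by$ and using H.3 kills $\langle\bB(\bu^0,\bar\by),\bar\by\rangle$. By H.2, $|\langle \bB(\bar\by,\bu^0),\bar\by\rangle|\le \frac12\|\bar\by\|^2+C|\bar\by|^2|\bu^0|\|\bu^0\|$. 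Since $\bu^0\in L^\infty(H)\cap L^2(V)$, Gronwall gives bounds in $C([0,T];H)\cap L^2(V)$ uniform over $\|\psi\|\le M$, and uniqueness follows by the same estimate applied to a difference. For (a), a weakly convergent $\psi_n\to\psi$ gives forcing terms $\int g(\bu^0)\psi_n\,d\nu$ converging weakly in $L^2(H)$. Subtract the equations and test with the difference. The cross term $\int_0^t(\int g(\bu^0)(\psi_n-\psi)d\nu,\,\bar\by_n-\bar\by)\,ds$ tends to $0$: the differences $\bar\by_n-\bar\by$ are bounded in $L^2(V)$ with time derivatives bounded in $L^2(V')$, hence precompact in $L^2(H)$ by Aubin--Lions and the compact embedding $V\subset H$. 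Gronwall then yields convergence in $C([0,T];H)$, hence in $D([0,T];V')$.

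\emph{Controlled process.} Let $\bar\bu^\e$ solve the controlled equation driven by $\e N^{\e^{-1}\varphi^\e}$; its well-posedness is the result of Section 3, via local monotonicity H.7. Set $\bar Y^\e=(\bar\bu^\e-\bu^0)/a(\e)$. Then
\[
d\bar Y^\e+\big[\bA\bar Y^\e+\bB(\bar Y^\e,\bu^0)+\bB(\bu^0,\bar Y^\e)+a(\e)\bB(\bar Y^\e)\big]dt=\int g(\bar\bu^\e)\psi^\e\,d\nu\,dt+\frac{\e}{a(\e)}\int g(\bar\bu^\e)\,\tilde N^{\e^{-1}\varphi^\e}.
\]
The first step is an a priori estimate $\sup_\e E[\sup_t|\bar Y^\e|^2+\int_0^T\|\bar Y^\e\|^2]<\infty$. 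To get it, apply It\^o to $|\bar Y^\e|^2$. The term $a(\e)\langle\bB(\bar Y^\e),\bar Y^\e\rangle$ vanishes by H.3. The martingale has quadratic variation of order $\frac{\e}{a^2(\e)}\int M_g^2(1+|\bar\bu^\e|)^2\varphi^\e\,d\nu$, and this is controlled by the standard BDG inequalities $\int_{|\psi|>\beta/a}|\psi|\le C(M)a(\e)$ and $\int M_g^2\varphi^\e\le C(1+\cdots)$; H.8 is used here to handle $\varphi^\e\ell$-type bounds. These give a vanishing contribution since $\e/a^2(\e)\to0$.

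The control term is split over $\{|\psi^\e|\le\beta/a\}$ and its complement. On the first set we write $g(\bar\bu^\e)=g(\bu^0)+[g(\bar\bu^\e)-g(\bu^0)]$. By H.6 the bracket is $\le L_g a(\e)|\bar Y^\e|$, and it vanishes after multiplying by $\psi^\e$ once we know $\bar\bu^\e\to\bu^0$ in probability in $C(H)$, which follows from the same estimate. On the complement we use the $L^1$ bound above.

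Next we need tightness of $\bar Y^\e$ in $D([0,T];V')$. Because the jumps of $\bar Y^\e$ are of size $\e/a(\e)\to0$, Aldous' criterion together with the $H$-bound and compactness of $H\subset V'$ should suffice. For identification, we pass to the limit in the weak formulation tested against $e_k$, using H.2 for $a(\e)\bB(\bar Y^\e)\to0$ in $L^1(V')$ and the weak convergence of $\psi^\e\mathbf 1$ against the fixed function $g(\bu^0)e_k$. Uniqueness for \eqref{uzeroeqn} then gives (b).

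The main obstacle is the a priori/convergence estimate for $\bar Y^\e$. The nonlinearity is only locally Lipschitz (H.4), so we expect to localize with stopping times $\tau_R=\inf\{t:|\bar\bu^\e(t)|^2+\int_0^t\|\bar\bu^\e\|^2>R\}$, obtain bounds up to $\tau_R$, and show that $P(\tau_R<T)\to0$ uniformly in $\e$. Controlling the large-$\psi^\e$ region without a square-integrability bound is handled by the \cite{BDG} inequalities combined with H.8.
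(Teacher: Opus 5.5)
Your proposal is correct and follows essentially the same route as the paper. Both verify the Budhiraja--Dupuis--Ganguly sufficient conditions: for the linearized skeleton through an energy estimate, $L^2(0,T;H)$-compactness, a weak--strong pairing and Gronwall; and for the controlled processes through uniform a priori bounds on $\bar Y^\e$ (with H.8 entering via the weighted BDG inequalities), Aldous tightness in $D([0,T];V')$, and identification of the limit.

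The differences are minor. You use Aubin--Lions in place of the M\'etivier criterion, and you identify the limit directly from the weak formulation plus uniqueness of the linearized equation, rather than subtracting the stochastic convolution $\eta_\e$. For the terms $\bB(\bar Y^\e,\bu^0)$ and $\bB(\bu^0,\bar Y^\e)$ you should carry $L^2(0,T;V)$-weak or $L^2(0,T;H)$ convergence through the Skorokhod step, since $D([0,T];V')$ convergence alone does not suffice there. Also, your Young bound from H.2 should read $C|\bar\by|^2|\bu^0|^2\|\bu^0\|^2$, which is still integrable in time.
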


\section{Existence and Uniqueness of Solutions to the SPDE}
\setcounter{equation}{0}
Our aim in this section is to establish {\it a priori} bounds for 
solutions to \eqref{epsiloneqn}. Since the results in this section will be stated for a fixed $\ep$, for notational convenience, we denote $\bu^\ep$ simply by $\bu$. 

We start by stating the following useful result on relative compactness that underpins much of the theoretical development. Suppose that $V, H$ and $V^{\prime}$ are Hilbert spaces with $V^{\prime}$ being the dual of $V$. Let 
$V \subset_{\to} H$ be a compact, dense imbedding so that one obtains the Hilbert triple   $V \subset_{\to} H\subset_{\to} V^{\pr}$.  The following result is taken from Metivier ( see \cite{Met}, page 113) which uses the ideas of a result due to Dubinsky given in Lions ( see \cite{Li}). 
 
\begin{theorem}\label{tightness}
Let $q \ge 1$. Every subset of $L^q(0, T; H) \cap D([0, T]; V^{\prime})$ which is relatively compact in $D([0, T]; V^{\prime})$ 
and bounded in $L^q(0, T; V)$ is relatively compact in $L^q(0, T; H)$.
\end{theorem}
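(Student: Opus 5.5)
The plan is the classical Dubinsky/Aubin--Lions argument: reduce to a sequence, interpolate the $H$-norm between the $V$- and $V'$-norms via compactness, and use Skorokhod convergence to control the $V'$ part. Let $K$ be such a subset and $(u_n)\subset K$ a sequence. It suffices to extract a subsequence that is Cauchy in $L^q(0,T;H)$. Relative compactness in $D([0,T];V')$ gives a subsequence, still denoted $u_n$, converging in the Skorokhod $J_1$ topology to some $u\in D([0,T];V')$. The second ingredient is the Lions interpolation inequality. Since $V\subset_\to H$ is compact and $H\subset_\to V'$ is continuous, for every $\eta>0$ there is $C_\eta$ with
\[
|v|\le \eta\norm{v}+C_\eta|v|_{V'}\qquad\text{for all } v\in V.
\]
This is proved by contradiction. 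If it failed, there would be $v_k$ with $\norm{v_k}=1$ and $|v_k|>\eta+k|v_k|_{V'}$. By compactness, $v_k\to v$ in $H$, and $|v_k|_{V'}\to0$ forces $v=0$. This contradicts $|v|\ge\eta$.

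Applying the inequality to $u_n(t)-u_m(t)$ and integrating gives
\[
\|u_n-u_m\|_{L^q(H)}\le \eta\|u_n-u_m\|_{L^q(V)}+C_\eta\|u_n-u_m\|_{L^q(V')}\le 2\eta R+C_\eta\|u_n-u_m\|_{L^q(V')},
\]
where $R$ is the $L^q(0,T;V)$ bound on $K$. It then remains to show that $(u_n)$ is Cauchy in $L^q(0,T;V')$. The inequality then follows because $\eta$ is arbitrary.

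For this step I use the standard properties of $J_1$ convergence. First, a $J_1$-convergent sequence is uniformly bounded in sup norm, $\sup_n\sup_t|u_n(t)|_{V'}<\infty$. Second, $u_n(t)\to u(t)$ in $V'$ at every continuity point $t$ of $u$. The discontinuity set of $u$ is at most countable, so this convergence holds for Lebesgue-a.e. $t$. Dominated convergence then gives $u_n\to u$ in $L^q(0,T;V')$, so the sequence is Cauchy there. Combined with the previous display, $(u_n)$ is Cauchy in $L^q(0,T;H)$. Its $L^q(H)$ limit coincides a.e. with $u$, which shows that $u\in L^q(0,T;H)$.

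I expect the only delicate point to be the passage from Skorokhod convergence to a.e. pointwise convergence. This needs the time changes $\lambda_n\to\mathrm{id}$ uniformly together with right-continuity with left limits. It is standard, so I would cite it from Billingsley or Ethier--Kurtz rather than reprove it.
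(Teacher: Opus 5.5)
Your proof is correct and is the classical Dubinsky--Lions argument that the paper does not reproduce but invokes by citing M\'etivier. It combines Ehrling's inequality $|v|\le \eta\|v\|+C_\eta|v|_{V'}$ with the facts that $J_1$-convergence gives uniform sup-norm bounds and convergence at every continuity point, hence convergence in $L^q(0,T;V')$ by dominated convergence. The only implicit point is that your contradiction step, which concludes $v=0$ from $|v_k|_{V'}\to 0$, uses injectivity of $H\hookrightarrow V'$; this holds because $V$ is dense in $H$.
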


To solve \eqref{epsiloneqn}, we form the sequence of Galerkin approximations to it as follows. 
Let 

$H_n := {\rm Span} \{e_1, e_2, \cdots , e_n\}$ where $\{e_j\}$ is taken as the orthonormal basis in $H$ consisting of eigenfunctions of the operator ${\bA}$  so that each $e_j \in D(\bA)$. Let $P_n$ denote the orthogonal projection of $H$ to $H_n$.
Define ${\bu}^\ep_n$ as the solution of the following stochastic differential equation: For each $\bv \in H_n$,
\begin{align}
&d({\bu}_n(t) , {\bv}) + \{\langle \bA {\bu}_n(t) + \bB({\bu}_n(t)), \bv \rangle\}dt \notag\\
& = \langle {\f}(t), {\bv} \rangle  + {\e}\int_{\mathbb Z} ( g({\bu}_n(t), z), \bv)\,\tilde{N}^{\e^{-1}}(dt, dz) 
\label{eqn3.2}
\end{align}
with $({\bu}_n(0), \bv)= ({\bu}(0), \bv)$. The coefficients are locally Lipschitz, and hence we need  {\it{a priori}} estimate 
to claim global existence of a solution $\bu_n$ as an adapted process in $C(0, T; H_n)$. Indeed, \eqref{eqn3.2} implies that 
\begin{align}
&d{\bu}p_n(t) + \{\bA_n {\bu}_n(t) + \bB_n({\bu}_n(t))\}dt \notag\\
& = {\f}_n(t) + {\e}\int_{\mathbb Z} g_n({\bu}_n(t), z) \,\tilde{N}^{\e^{-1}}(dt, dz)
\label{eqn3.3}
\end{align}
where $\bA_n, \bB_n, \f_n$ and $g_n$ are the orthogonal projections of $\bA, \bB, \f$ and $g$ to 
the finite dimensional space $H_n$.

\begin{proposition}\label{prop3.1}
 Let  $E|{\bu}_0|^2 < \infty$ and ${\f} \in L^2(0, T; V\pr)$. Then, under hypotheses {\bf{H}}, we have the following estimates:
\begin{enumerate}
 \item[(i)] There exists a positive constant $C_1$ that depends on $E|\bu_0|^2, \int_0^T \mathbb{E}|\f (s)|_{V\pr}^2ds$, and $T$ such that
\begin{equation}
 \sup_{0 \le t \le T} \mathbb{E}\left(|{\bu}_n(t)|^2\right) + \int_0^T \mathbb{E} \left(\norm{{\bu}_n(s)}^2\right)ds \le 
 C_1 \label{est1}
\end{equation}
and
\item[(ii)] there exists a positive constant $C_2$ that depends on $E|\bu_0|^2, \int_0^T \mathbb{E}|\f (s)|_{V\pr}^2ds$, and $T$ such that 
\begin{equation}
 \mathbb{E}[\sup_{0 \le t \le T} |\bu_n (t)|^2] \le C_2 \label{est2}
\end{equation}
\end{enumerate}
\end{proposition}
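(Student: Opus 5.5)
We apply It\^o's formula for jump semimartingales to $|\bu_n(t)|^2$ in the finite-dimensional equation \eqref{eqn3.3}. Since $H_n$ is finite dimensional and the coefficients are locally Lipschitz, we first work up to the stopping time $\tau_N:=\inf\{t: |\bu_n(t)|>N\}\wedge T$, derive bounds uniform in $N$, and then let $N\to\infty$. This also gives global existence of $\bu_n$. It\^o's formula yields
\begin{align*}
|\bu_n(t)|^2 + 2\int_0^t \norm{\bu_n(s)}^2 ds &= |P_n\bu_0|^2 - 2\int_0^t\langle \bB(\bu_n(s)),\bu_n(s)\rangle ds + 2\int_0^t \langle \f(s),\bu_n(s)\rangle ds \\
&\quad + \e^2\int_{\mathbb Z\times[0,t]} |g_n(\bu_n(s-),z)|^2 N^{\e^{-1}}(ds,dz) + M_n(t),
\end{align*}
where
$$M_n(t)=2\e\int_{\mathbb Z\times[0,t]}(g_n(\bu_n(s-),z),\bu_n(s-))\,\tilde N^{\e^{-1}}(ds,dz).$$
By {\bf H.3}, $\langle \bB(u,u),u\rangle = -\langle \bB(u,u),u\rangle$, so the trilinear term vanishes. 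The forcing term is handled by Young's inequality: $2\langle \f,\bu_n\rangle \le \norm{\bu_n}^2 + |\f|_{V'}^2$.

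\textbf{Part (i).} Take expectations; $M_n$ stopped at $\tau_N$ is a martingale. The jump correction has compensator $\e^{-1}\nu(dz)ds$, so by {\bf H.5} its expectation equals
$$\e\,\mathbb{E}\int_0^t\int_{\mathbb Z}|g_n|^2\nu(dz)ds \le 2\e M_2^2\int_0^t (1+\mathbb{E}|\bu_n(s)|^2)ds.$$
This gives
$$\mathbb{E}|\bu_n(t)|^2+\mathbb{E}\int_0^t\norm{\bu_n}^2ds \le \mathbb{E}|\bu_0|^2+\int_0^T\mathbb{E}|\f|_{V'}^2ds + 2\e M_2^2T + 2\e M_2^2\int_0^t \mathbb{E}|\bu_n(s)|^2ds.$$
Gronwall's inequality (first for the stopped process, then letting $N\to\infty$ by Fatou) gives \eqref{est1}, with $C_1$ independent of $n$. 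Taking $\e\le 1$ makes $C_1$ uniform in $\e$ as well.

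\textbf{Part (ii).} Take $\sup_{t\le T}$ before the expectation. The compensated jump term is bounded in expectation by the same quantity as in (i), so it is controlled by $C_1$. The main obstacle is the martingale term $\mathbb{E}\sup_t|M_n(t)|$. For this we use the Burkholder--Davis--Gundy inequality with $p=1$:
\begin{align*}
\mathbb{E}\sup_{t\le T}|M_n(t)| &\le C\e\,\mathbb{E}\Big(\int_{\mathbb Z\times[0,T]}|g_n(\bu_n(s-),z)|^2|\bu_n(s-)|^2 N^{\e^{-1}}(ds,dz)\Big)^{1/2}\\
&\le \tfrac12\mathbb{E}\sup_t|\bu_n(t)|^2 + C\e^2\,\mathbb{E}\int_{\mathbb Z\times[0,T]}|g_n|^2 N^{\e^{-1}}(ds,dz).
\end{align*}
The last expectation equals $\e^{-1}\mathbb{E}\int\int|g_n|^2\nu\,ds$, which is bounded by $C\e M_2^2(T+TC_1)$ using part (i). Absorbing the factor $\tfrac12\mathbb{E}\sup_t|\bu_n(t)|^2$ into the left-hand side yields \eqref{est2}. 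This absorption is legitimate because we work up to $\tau_N$, where the quantity is finite; the bound is then uniform in $N$, and monotone convergence removes the localization.
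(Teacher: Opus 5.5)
Your argument is correct, but it is organized differently from the paper's. The paper omits this proof and points to Lemma~\ref{scontrol}. That proof, specialized to $\varphi\equiv 1$, takes the supremum in time pathwise first and runs Gronwall pathwise with $\sup_t \bar M^\e_{t\wedge\tau_R}$ still inside the bound. Only then does it take expectations, estimating the stochastic integral by the Lenglart--L\'epingle--Pratelli inequality as $\sqrt{\e}\,C(1+\mathbb{E}\sup_t|\bu_n(t)|^2)$. Because this term is multiplied by the Gronwall exponential, it can be absorbed only for $\e$ small, and the $V$-norm bound is read off afterwards.

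You instead split the work into two steps. In (i) the martingale disappears under expectation, and Gronwall is applied to $\mathbb{E}|\bu_n(t\wedge\tau_N)|^2$. In (ii) the Davis/BDG inequality with the quadratic variation $[M_n]$, together with Young's inequality, peels off $\tfrac12\mathbb{E}\sup_t|\bu_n(t)|^2$. The remaining term $\e\,\mathbb{E}\int_0^T\int_{\mathbb Z}|g_n|^2\nu(dz)ds$ is already controlled by (i). This gives you estimates with no smallness condition on $\e$ and constants uniform in $\e\in(0,1]$.

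What the paper's pathwise route buys is applicability to the controlled equation \eqref{11}. There the extra drift $\int_{\mathbb Z}(g_n,\bu_n)(\varphi-1)\nu(dz)$ carries the random time weight $\int_{\mathbb Z}M_g(z)|\varphi(s,z)-1|\nu(dz)$. Taking expectations before Gronwall, as in your step (i), therefore does not close directly in that setting.

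One small point should be stated explicitly. Since $\tau_N$ can be reached by a jump, the absorption in (ii) needs $\mathbb{E}\sup_{t\le\tau_N}|\bu_n(t)|^2<\infty$. This holds because the overshoot at $\tau_N$ is a single jump of size at most $\e M_g(z)(1+N)$. That jump is square integrable, since $\mathbb{E}\int_{\mathbb Z\times[0,T]}M_g^2(z)N^{\e^{-1}}(ds,dz)=\e^{-1}TM_2^2<\infty$ by {\bf H.5}.
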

The proof of the Proposition is omitted since an analogous result will be proved for more general equations in Section 4 (see Proposition \ref{scontrol}). 


We proceed to  obtain  higher order a priori estimates in the following proposition. 
\begin{proposition}\label{prop3.2}
Assume Conditions ${\bf H.1}$ - ${\bf H. 6}$. In addition, assume that $E|\bu (0)|^4 < \infty$, $\f$ be in $L^4( [0, T] : V^{\pr})$, and the function $M_g$ in Hypothesis {\bf H.5} be in $L^i(\nu)$ for $i = 2, 3, 4$.  With the notation 
 $M_i = \norm{M_g}_{L^i(\nu)}$, for all $0 \le t \le T$
and $\d > 0$, there exists a positive constant $C$ that depends on $\mathbb{E}|\bu(0)|^4, \int_0^{T}e^{-\d s}\,\norm{\f(s)}_{V\pr}^4ds, \d, M_2, M_3, M_4$ and $T$ such that
\begin{align}
&\mathbb{E} \left(e^{-\d t} |\bu_n(t-)|^4\right)  +  \mathbb{E} \left(\int_0^{t} e^{-\d s}|\bu_n(s)|^2 \norm{\bu_n(s)}^2 ds\right)\notag\\
& \le C. \label{three}
\end{align}
\end{proposition}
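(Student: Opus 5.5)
We apply Itô's formula for jump semimartingales to $F(t,x)=e^{-\d t}|x|^4$ evaluated along the finite-dimensional process $\bu_n$ solving \eqref{eqn3.3}. Write $\Delta_z := \e\, g_n(\bu_n(s-),z)$ for the jump size. The decomposition is
\begin{align*}
e^{-\d t}|\bu_n(t)|^4 &= |\bu_n(0)|^4 - \d\int_0^t e^{-\d s}|\bu_n(s)|^4ds - 4\int_0^t e^{-\d s}|\bu_n(s)|^2\langle \bA\bu_n+\bB(\bu_n)-\f,\bu_n\rangle ds\\
&\quad + \int_0^t\!\!\int_{\mathbb Z} e^{-\d s}\big(|\bu_n(s-)+\Delta_z|^4-|\bu_n(s-)|^4\big)\tilde N^{\e^{-1}}(ds,dz)\\
&\quad + \e^{-1}\int_0^t\!\!\int_{\mathbb Z} e^{-\d s}\big(|\bu_n+\Delta_z|^4-|\bu_n|^4-4|\bu_n|^2(\bu_n,\Delta_z)\big)\nu(dz)ds.
\end{align*}
By {\bf H.3}, $\langle\bB(\bu_n),\bu_n\rangle=0$, and by {\bf H.1}, $\langle \bA\bu_n,\bu_n\rangle=\norm{\bu_n}^2$. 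The drift therefore yields the good term $-4\int e^{-\d s}|\bu_n|^2\norm{\bu_n}^2$. The forcing term is handled by
\[
4|\bu_n|^2\,|\f|_{V\pr}\norm{\bu_n}\le 2|\bu_n|^2\norm{\bu_n}^2+2|\bu_n|^2|\f|^2_{V\pr},
\]
followed by $|\bu_n|^2|\f|^2_{V\pr}\le \tfrac12|\bu_n|^4+\tfrac12|\f|^4_{V\pr}$. This consumes half the dissipation and leaves $\int e^{-\d s}\norm{\f}^4_{V\pr}$ together with a term of size $|\bu_n|^4$.

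The main work is the compensator term. Set $x=\bu_n$ and $y=\Delta_z$. Expanding $|x+y|^2=|x|^2+2(x,y)+|y|^2$ gives
\[
|x+y|^4-|x|^4-4|x|^2(x,y)=4(x,y)^2+2|x|^2|y|^2+4(x,y)|y|^2+|y|^4 \le C\big(|x|^2|y|^2+|x||y|^3+|y|^4\big).
\]
By {\bf H.5}, $|y|\le \e M_g(z)(1+|x|)$, and the projection $P_n$ does not increase norms. Integrating against $\e^{-1}\nu$ therefore gives a bound
\[
C\big(\e M_2^2+\e^2M_3^3+\e^3M_4^4\big)(1+|x|^4).
\]
This is exactly where the hypothesis $M_g\in L^i(\nu)$, $i=2,3,4$, is used. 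For $\e\le 1$ the constants are uniform in $\e$.

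Collecting terms, we choose $\d$ large compared with $C(M_2,M_3,M_4)$ so that the $|\bu_n|^4$ terms are absorbed by $-\d\int e^{-\d s}|\bu_n|^4$. Alternatively, we can keep $\d$ arbitrary, as in the statement, and close with Gronwall's inequality on $\mathbb{E}\,e^{-\d t}|\bu_n(t)|^4$; the constant then depends on $T$ and $\d$. To take expectations we need the stochastic integral against $\tilde N$ to have mean zero. For this we localize with stopping times $\tau_R=\inf\{t:|\bu_n(t)|>R\}$. On $[0,\tau_R]$ the integrand is bounded by $C(R)(M_g+M_g^4)$, which is square-integrable, or at least integrable against $\nu\otimes ds$, so the stopped integral is a true martingale. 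Taking expectations, applying Gronwall, and then letting $R\to\infty$ via Fatou's lemma gives \eqref{three}. The left limit $\bu_n(t-)$ in \eqref{three} causes no trouble: for fixed $t$ the process has no jump at $t$ almost surely, or one can simply apply Fatou along $s\uparrow t$.

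I expect the main obstacle to be the careful control of the jump compensator term, namely the fourth-order expansion and the bound on the cross term $|x||y|^3$, together with the justification of the martingale property through localization. The deterministic parts follow routinely from {\bf H.1}--{\bf H.3}.
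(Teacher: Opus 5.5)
Your argument is correct and is essentially the paper's proof. Both apply It\^o's formula to $e^{-\d t}|x|^4$ up to a localizing time, use {\bf H.1} and {\bf H.3} on the drift, Young's inequality on the forcing, and the growth bound {\bf H.5} with $M_2,M_3,M_4$ on the compensator, then close with Gronwall and let $R\to\infty$; you also supply the fourth-order expansion that the paper only asserts.

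One small correction to your martingale step: with only $M_g\in L^2\cap L^3\cap L^4(\nu)$ and $\nu$ possibly infinite, the bound $C(R)(M_g+M_g^4)$ need not lie in $L^1(\nu)$ or $L^2(\nu)$. Instead, split the jump integrand into the linear part $4e^{-\d s}|\bu_n|^2(\bu_n,\Delta_z)$, bounded by $C(R)M_g\in L^2(\nu)$, and the nonnegative convexity remainder, bounded by $C(R)(M_g^2+M_g^3+M_g^4)\in L^1(\nu)$; the stopped compensated integral of each piece then has mean zero.
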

\begin{proof}
Define $\tau_R = \inf\{t: |\bu_n(t)|^2 + \int_0^t\,||\bu_n(s)||^2\,ds  > R\}$. 
Applying the It\^o formula to the function $\varphi(t, x) = e^{-\d t}|x|^4$
and then taking expectation, we obtain
\begin{align}
& \mathbb{E} \left(e^{-\d (t\wedge \tau_R)} |\bu_n(t \wedge \tau_R)|^4\right) +  \mathbb{E}\left(\int_0^{t\wedge \tau_R}e^{-\d s} \left[\d |\bu_n(s)|^4 + 4 |\bu_n(s)|^2 \norm{\bu_n(s)}^2
\right]ds\right) \notag\\
&= \mathbb{E}\left(|\bu_n(0)|^4\right) + 4 \mathbb{E}\left(\int_0^{t\wedge \tau_R}e^{-\d s} |\bu_n(s)|^2 \langle \f(s), \bu_n(s)\rangle ds\right) \notag\\
&\;\;\;+ \mathbb{E}\Big(\int_0^{t\wedge \tau_R}\int_{\mathbb Z} e^{-\d s} [|\bu_n(s-) + \e g_n(s, \bu_n(s-), z)|^4 - |\bu_n(s-)|^4 \notag\\
&\;\;\;\;\;\;\;\;\;\;\;\;\;\;\;\;\;\;- 4|\bu_n(s)|^2 (\bu_n(s-), \e g_n(s,  \bu_n(s-), z))]\frac{\nu (dz)}{\e}ds\Big)\label{threepr}
\end{align}
The second term on the right is dominated by 
\begin{align}
&  2\mathbb{E}\left(\int_0^{t\wedge \tau_R}e^{-\d s} |\bu_n(s)|^2\{ \norm{\bu_n(s)}^2 + \norm{\f(s)}_{V\pr}^2\}ds\right)\notag\\
&\le 2 \mathbb{E}\left(\int_0^{t\wedge \tau_R}e^{-\d s} |\bu_n(s)|^2\norm{\bu_n(s)}^2ds\right) + \frac{1}{\d}\mathbb{E}\left(\int_0^{t\wedge \tau_R}e^{-\d s}\,\norm{\f(s)}_{V\pr}^4ds\right)\notag\\
&\;\;\;\;\; + \frac{\d}{2} \mathbb{E}\left(\int_0^{t\wedge \tau_R}e^{-\d s} |\bu_n(s)|^4ds\right).\label{threedpr}
\end{align}
The third term on the right side of \eqref{threepr} admits for a suitable constant $\tilde{C}_1 = C(T, \d, M_2, M_3, M_4)$ the upper bound 
\begin{equation}
   \tilde{C}_1 \e\left(\mathbb{E}\int_0^{t \wedge \tau_R} e^{- \d s} (1 + |\bu_n(s)|^4) ds\right)\label{threetpr}
 \end{equation}
 by the growth condition on $g$, and the finiteness of $M_g$ in $L^i(\nu)$ for $i = 2, 3, 4$.
 
 The  a priori estimates obtained in Proposition \ref{prop3.1} and the bounds \eqref{threedpr} and \eqref{threetpr} when used in  \eqref{threepr}, yield by a Gronwall argument that  there exists a constant $C_3 > 0$ such that
\begin{align}
& \mathbb{E} \left(e^{-\d (t\wedge \tau_R)} |\bu_n(t \wedge \tau_R)|^4\right)  +  \mathbb{E} \left(\int_0^{t \wedge \tau_R} e^{-\d s}|\bu_n(s)|^2 \norm{\bu_n}^2 ds\right) \notag\\
&\le C_3 \label{threedone}
\end{align}
The proof is over by observing that $\tau_R \to \infty$ a.s. upon letting $R \to \infty$.
\end{proof}
\begin{proposition}
 Under the hypotheses of Proposition \ref{prop3.2}, there exists a constant $C_4 > 0$ such that the following estimate holds for any $T > 0$:
\begin{align}
 &\mathbb{E}\left( \sup_{0 \le t \le T}e^{-\d t} |\bu_n(t-)|^4  +  \mathbb{E} \int_0^{T} e^{-\d s}|\bu_n(s)|^2 \norm{\bu_n(s)}^2 ds\right) \le C_4.\label{four}
\end{align}
\end{proposition}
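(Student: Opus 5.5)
The plan is to upgrade the pointwise-in-time bound of Proposition \ref{prop3.2} to a bound on the expected supremum. As in that proof, I would use the It\^o formula for $\varphi(t,x)=e^{-\d t}|x|^4$ applied to $\bu_n$ up to the stopping time $\tau_R$, but now keep the martingale term rather than discarding it by taking expectations. For $t\le T$ this gives
\begin{align*}
e^{-\d (t\wedge\tau_R)}|\bu_n(t\wedge\tau_R)|^4 + \int_0^{t\wedge\tau_R} e^{-\d s}\bigl[\d|\bu_n|^4 + 4|\bu_n|^2\norm{\bu_n}^2\bigr]ds = |\bu_n(0)|^4 + I_1(t) + I_2(t) + M(t),
\end{align*}
where $I_1$ is the forcing term $4\int e^{-\d s}|\bu_n|^2\langle \f,\bu_n\rangle ds$, and $I_2$ is the compensator part of the jump term (the integrand in \eqref{threepr} against $\e^{-1}\nu(dz)ds$). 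The remaining term $M$ is the local martingale $\int\int e^{-\d s}\bigl[|\bu_n(s-)+\e g_n|^4-|\bu_n(s-)|^4\bigr]\tilde N^{\e^{-1}}(ds,dz)$. Note that $\langle \bB(\bu_n),\bu_n\rangle=0$ by H.3, so the bilinear term drops out exactly as before. I take $\sup_{0\le t\le T}$ and then expectations.

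The drift terms are handled exactly as in \eqref{threedpr} and \eqref{threetpr}. The term $\sup_t|I_1|$ is bounded by $2\int_0^{T}e^{-\d s}|\bu_n|^2\norm{\bu_n}^2ds + \d^{-1}\int e^{-\d s}\norm{\f}_{V\pr}^4ds + \frac{\d}{2}\int e^{-\d s}|\bu_n|^4ds$. Half of the dissipative integral absorbs the first piece, and the last piece is controlled by Proposition \ref{prop3.2}. Similarly $\mathbb{E}\sup_t|I_2|\le \tilde C_1\e\,\mathbb{E}\int_0^T e^{-\d s}(1+|\bu_n|^4)ds$, which is finite by Proposition \ref{prop3.2}.

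The main obstacle is the martingale $M$. I would apply the Burkholder--Davis--Gundy inequality (the $p=1$ version for purely discontinuous martingales) to get
\begin{align*}
\mathbb{E}\sup_{t\le T}|M(t\wedge\tau_R)| \le C\,\mathbb{E}\Bigl(\int_0^{T\wedge\tau_R}\int_{\mathbb Z} e^{-2\d s}\bigl||\bu_n(s-)+\e g_n|^4-|\bu_n(s-)|^4\bigr|^2 N^{\e^{-1}}(ds,dz)\Bigr)^{1/2}.
\end{align*}
The increment expands as $4|\bu_n|^2(\bu_n,\e g_n)$ plus terms of order $\e^2|g|^2|\bu_n|^2$, $\e^3|g|^3|\bu_n|$ and $\e^4|g|^4$. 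For the leading term, I factor out $\sup_s e^{-\d s/2}|\bu_n(s-)|^2$ and use Young's inequality to write the bound as
\[
\tfrac12\,\mathbb{E}\sup_{s}e^{-\d s}|\bu_n(s-)|^4 + C\e^2\,\mathbb{E}\int\int e^{-\d s}M_g^2(1+|\bu_n|)^2|\bu_n|^2\,\e^{-1}\nu(dz)ds,
\]
where the $N$-integral of a nonnegative integrand has been replaced by its compensator in expectation. The higher-order terms are treated similarly: I use $(\sum a_i)^{1/2}\le\sum a_i^{1/2}$, or the bound $(\int\cdot\, dN)^{1/2}\le\int\cdot^{1/2}dN$ for pure jumps. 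This is where $M_g\in L^2\cap L^3\cap L^4(\nu)$ is needed, and it yields terms of the form $C\,\mathbb{E}\int_0^T e^{-\d s}(1+|\bu_n|^4)ds$, again bounded by Proposition \ref{prop3.2}. A subtle point is that the supremum absorbed on the left must be finite. This holds for each fixed $R$, since before $\tau_R$ the process is bounded, and the jump at $\tau_R$ is controlled by the linear growth H.5 in $L^4$.

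Combining the three estimates and absorbing $\frac12\mathbb{E}\sup$ gives a bound
\[
\mathbb{E}\sup_{t\le T}e^{-\d (t\wedge\tau_R)}|\bu_n(t\wedge\tau_R)|^4 + \mathbb{E}\int_0^{T\wedge\tau_R}e^{-\d s}|\bu_n|^2\norm{\bu_n}^2ds\le C_4,
\]
with $C_4$ independent of $n$ and $R$. Letting $R\to\infty$ and using monotone convergence together with $\tau_R\to\infty$ a.s. yields \eqref{four}, with left limits handled since $\sup_t|\bu_n(t-)|\le\sup_t|\bu_n(t)|$.
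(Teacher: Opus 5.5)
Your proposal is correct and follows the same route as the paper. The paper's proof says only that the bound follows from Proposition \ref{prop3.2} together with the Burkholder--Davis--Gundy inequality; you make this explicit by keeping the martingale term in the It\^o formula for $e^{-\d t}|x|^4$ and controlling it with the $p=1$ BDG inequality and Young's inequality, absorbing a fraction of $\mathbb{E}\sup_t e^{-\d t}|\bu_n(t)|^4$, then bounding the remaining drift and quadratic-variation terms by Proposition \ref{prop3.2}. You also supply a detail the paper omits: finiteness of the stopped supremum, obtained from the compensator bound on the jump at $\tau_R$ using $M_g\in L^4(\nu)$, which is what makes the absorption step legitimate.
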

\begin{proof}
 The proof follows by the estimates given in the previous proposition and Burkholder-Davis-Gundy inequality.
\end{proof}

With the above estimates, we prove the existence and uniqueness of solutions to the stochastic equation.
\begin{theorem}
In addition to the hypotheses of Proposition 3.2, let $\e > 0$ such that hypothesis {\bf H.7} holds. Then there exists a unique adapted process $\bu$ taking values in 
$D([0, T]; H) \cap L^2(0, T; V)$ that satisfies the a priori bounds given in the previous propositions and solves the stochastic equation \eqref{epsiloneqn} with parameter $\e$.
\end{theorem}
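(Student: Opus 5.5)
We follow the Galerkin/local monotonicity scheme in the spirit of Menaldi--Sritharan and Manna--Sritharan--Sundar. By Proposition \ref{prop3.1} and the fourth-moment bounds of Proposition \ref{prop3.2} and \eqref{four}, the Galerkin solutions $\bu_n$ are bounded uniformly in $n$ in $L^2(\Omega; L^\infty(0,T;H)) \cap L^2(\Omega; L^2(0,T;V))$. Hypothesis {\bf H.2} gives $|\bB(\bu_n)|_{V\pr}^2 \le c_1 |\bu_n|^2\norm{\bu_n}^2$, so \eqref{four} makes $\bB(\bu_n)$ bounded in $L^2(\Omega\times[0,T];V\pr)$; $\bA\bu_n$ is likewise bounded there, and $g(\cdot,\bu_n,\cdot)$ is bounded in $L^2(\Omega\times[0,T];L^2(\nu;H))$ by {\bf H.5}. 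Extracting a subsequence, we obtain weak (weak-$*$) limits $\bu_n \rightharpoonup \bu$ in $L^2(\Omega\times[0,T];V)$, $F(\bu_n) \rightharpoonup F_0$ in $L^2(\Omega\times[0,T];V\pr)$, and $g(\bu_n) \rightharpoonup S$ in the $L^2(\nu)$-valued space. Passing to the limit in \eqref{eqn3.3} tested against $e_j$ then gives
\[
\bu(t) = \bu(0) + \int_0^t [F_0(s) + \f(s)]\,ds + \e\int_{\mathbb Z\times[0,t]} S(s,z)\,\tilde N^{\e^{-1}}(ds,dz),
\]
and the It\^o formula in the Gelfand triple (Gy\"ongy--Krylov) gives a c\`adl\`ag $H$-valued version of $\bu$.

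The main step, and the main obstacle, is identifying $F_0 = F(\bu)$ and $S = g(\cdot,\bu,\cdot)$. Only a local monotonicity is available, so we use the weighted Minty trick. Fix $\bv \in L^\infty(\Omega\times[0,T];H_m)$ adapted, with $\norm{\bv}_0 \le r$, and set $r(t) = \int_0^t h(\norm{\bv(s)}_0)\,ds$. It is cleaner to let $h$ depend on $\norm{\bv}_0$ pointwise in time, so we take $h(r(s))$ in that sense. Apply It\^o's formula to $e^{-r(t)}|\bu_n(t)|^2$ and to $e^{-r(t)}|\bu(t)|^2$. Weak lower semicontinuity of $\mathbb{E}e^{-r(T)}|\bu_n(T)|^2$ gives
\[
\liminf_n \mathbb{E}\int_0^T e^{-r}\Big(2\langle F(\bu_n),\bu_n\rangle - h|\bu_n|^2 + \e\int_{\mathbb Z}|g(\bu_n)|^2\nu(dz)\Big)ds \ge \mathbb{E}\int_0^T e^{-r}\Big(2\langle F_0,\bu\rangle - h|\bu|^2 + \e\int_{\mathbb Z}|S|^2\nu(dz)\Big)ds.
\]
Combine this with \eqref{mono} applied to the pair $(\bu_n,\bv)$, where the monotone inequality is written with the cross terms expanded. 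Here the $|\bw|^2$ weight $h$ is absorbed exactly by the exponential factor, and the factor $2$ versus $1$ in front of $\langle F\cdot,\cdot\rangle$ has to be reconciled via the quadratic normalization used in the Itô formula. We obtain
\[
\mathbb{E}\int_0^T e^{-r}\Big(2\langle F_0 - F(\bv), \bu-\bv\rangle - h|\bu-\bv|^2 + \e\int_{\mathbb Z}|S-g(\bv)|^2\nu\Big)ds \le 0.
\]
Taking $\bv = \bu$, which is legitimate after a density and localization argument in $\norm{\cdot}_0$ using $\norm{\cdot}_0 \le C_0\norm{\cdot}$ and stopping times, yields $S = g(\bu)$. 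Then taking $\bv = \bu - \lambda \bw$ with $\lambda \downarrow 0$ and using hemicontinuity of $F$ (from bilinearity and {\bf H.4}) gives $F_0 = F(\bu)$. The delicate point is making the test functions $\bv$ range over a sufficiently rich class bounded in $\norm{\cdot}_0$ while $\bu$ itself is only in $L^2(0,T;V)$. For this we plan to localize with $\tau_R$ and use the continuity of $h$.

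Uniqueness follows from the same monotonicity. For two solutions $\bu_1, \bu_2$, set $\bw = \bu_1 - \bu_2$ and apply It\^o's formula to $e^{-r(t)}|\bw(t)|^2$ with $r(t) = \int_0^t h(\norm{\bu_2(s)}_0)\,ds$, stopped at $\tau_R = \inf\{t: \int_0^t\norm{\bu_2}^2 ds > R\}$. By \eqref{mono} the drift is nonpositive. The martingale term has zero mean, and its quadratic variation contributes $\e\int|g(\bu_1)-g(\bu_2)|^2\nu$, which is already absorbed in \eqref{mono}. Hence $\mathbb{E}e^{-r(t\wedge\tau_R)}|\bw(t\wedge\tau_R)|^2 = 0$, and letting $R\to\infty$ gives $\bu_1 = \bu_2$. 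The a priori bounds pass to $\bu$ by lower semicontinuity.
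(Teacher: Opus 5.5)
Your proposal is correct and follows essentially the same route as the paper. The shared steps are: Galerkin approximations with weak limits $F_0$ and $G$ (your $S$); the weighted Minty argument with $r(t)=\int_0^t h(\norm{\bv(s)}_0)\,ds$, which combines the It\^o formula for $e^{-r(t)}|\bu_n(t)|^2$, lower semicontinuity and {\bf H.7}; then the test $\bv=\bu$ to identify the noise coefficient and $\bv=\bu-\lambda\bw$, $\lambda\downarrow 0$, to identify the drift; and uniqueness from the It\^o formula for $e^{-r(t)}|\bw(t)|^2$.

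The factor-$2$ mismatch you flag is settled by taking $r(t)=2\int_0^t h(\norm{\bv(s)}_0)\,ds$: double \eqref{mono} and drop half of the nonnegative noise term. Also, like the paper, you tacitly need $\int_0^T h(\norm{\bu(s)}_0)\,ds<\infty$ a.s., so that $e^{-r}>0$ when $\bv=\bu$ and in the uniqueness step. This holds in the examples, e.g.\ $h(r)=r^4$ with $\norm{\cdot}_0$ the $L^4$ norm for 2D Navier--Stokes.
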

\begin{proof}
 For the stochastic equation with parameter $\e$, let us call the 
$n$th Galerkin approximation as $\bu_n$,  omitting the dependence on $\e$ since the latter is fixed throughout the proof. 

Let $F(\bu): = - \bA(\bu) - \bB(\bu)$.
Using the a priori estimates, it follows from the Banach-Alaoglu theorem
that along a subsequence, the Galerkin approximations
$\{\bu_n\}$ have the following limits as $n \rt \infty$:
\begin{align}
&\bu_n\longrightarrow \bu \quad  \text {weakly in}\, \mathrm{L}^4(\Omega ;
  \mathrm{L}^{\infty}(0,T; H)) \cap\mathrm{L}^{2}
  (\Omega;\mathrm{L}^{2}(0,T; V)),\nonumber\\
& F(\bu_n)\longrightarrow F_0\quad \text{weakly in}\
\mathrm{L}^{2}(\Omega;\mathrm{L}^{2}(0,T; V^{\prime})),\nonumber\\
& g_n(\cdot, \bu_n, \cdot) \longrightarrow G\quad \text{weakly in} \,\mathrm{L}^{2}(\Omega;\mathrm{L}^{2}([0,T]\times Z; H)).
\end{align}
The assertion of the second statement holds since $F(\bu_n)$ is bounded in $\mathrm{L}^{2}(\Omega;\mathrm{L}^{2}(0,T; V^{\prime}))$. Likewise since noise coefficient has linear growth and $\bu_n$ is bounded in $\mathrm{L}^2(\Omega; L^2(0,T; H))$ uniformly in $n$, the last statement holds. Then $\bu$ has the It\^{o} differential
\begin{align}
d\bu(t) = F_0(t) dt + {\e} \int_{\mathbb Z} G(t, z)  \tilde{N}^{\e^{-1}}(dt, dz)\quad \text{weakly in}\
\mathrm{L}^{2}(\Omega;\mathrm{L}^{2}(0,T; V^{\prime})).\label{limit}
\end{align}
Fix an arbitrary adapted process $\bv$  in $L^{\infty}(\Omega \times (0, T); H_m)$ for $m \le n$, and define  
\begin{equation}\label{9}
r(t) \doteq  \int_0^t h(\|\bv(s)\|_0)  ds
\end{equation}
where  the function $h$ is as in {\bf H.7}.
 Then applying the It\^{o} Lemma to the function $e^{-r(t)} |\bu_n(t)|^2$, 
\begin{align*}
d\big[e^{-r(t)} |\bu_n(t)|^2\big] &= e^{-r(t)}\big(2F(\bu_n(t)) - \dot{r}(t) \bu_n(t), \bu_n(t)\big) dt + \e^2 e^{-r(t)}\int_{\mathbb Z}  |g_n(t, \bu_n(t), z)|^2 N^{\e^{-1}}(dt, dz)  \\
& \quad + 2 {\e} e^{-r(t)}\int_{\mathbb Z} \big(g_n(t, \bu_n(t-), z), \bu_n(t-)\big)  \tilde{N}^{\e^{-1}}(dt, dz).
\end{align*}
Integrating between $0 \leq t\leq T$ and taking expectation,
\begin{align*}
\E\big[e^{-r(T)} |\bu_n(T)|^2 - |\bu_n(0)|^2\big] =& \E\big[\int_0^T e^{-r(t)}\big(2F(\bu_n(t)) - \dot{r}(t) \bu_n(t), \bu_n(t)\big)dt\big]\\& +\e \E\int_0^T e^{-r(t)}\int_{\mathbb Z}  |g_n(t, \bu_n(t), z)|^2 \nu(dz)dt.
\end{align*}
 By lower semicontinuity, weak convergence and \eqref{limit},
\begin{align}
&\liminf_n \E\big[\int_0^T e^{-r(t)}\big(2F(\bu_n(t)) - \dot{r}(t) \bu_n(t), \bu_n(t)\big) dt  +\e \int_0^T e^{-r(t)} \int_{\mathbb Z} |g_n(t, \bu_n(t), z)|^2 \nu(dz) dt\big]\nonumber\\
& = \liminf_n E\big[e^{-r(T)} |\bu_n(T)|^2 - |\bu_n(0)|^2\big]\nonumber\\
& \geq \E\big[e^{-r(T)} |\bu(T)|^2 - |\bu(0)|^2\big]\nonumber\\
& = \E\big[\int_0^T e^{-r(t)}\big(2F_0(t) - \dot{r}(t) \bu(t), \bu(t)\big) dt +\e \int_0^T e^{-r(t)} |G(t, z)|^2 \nu(dz) dt \big].\label{lsc}
\end{align}
By local monotonicity,
\begin{align*}
& 2\E\big[\int_0^T e^{-r(t)} \big(F(\bu_n(t)) - F(\bv(t)), \bu_n(t) - \bv(t)\big) dt\big] - \E\big[\int_0^T e^{-r(t)} \dot{r}(t)|\bu_n(t) - \bv(t)|^2 dt\big]  \\
&\quad + \e\, \E\big[\int_0^T e^{-r(t)} \int_{\mathbb Z} |g_n(t, \bu_n(t), z) - g_n(t, \bv(t), z)|^2 \nu(dz)dt\big] \leq 0.
\end{align*}
Rearranging the terms,
\begin{align*}
& \E\big[\int_0^T e^{-r(t)}\big(2F(\bu_n(t)) - \dot{r}(t) \bu_n(t), \bu_n(t)\big) dt +\e \int_0^T e^{-r(t)} \int_{\mathbb Z} | g_n(t, \bu_n(t), z)|^2 \nu(dz) dt\big]\\
&\leq \E\big[\int_0^T e^{-r(t)}\big(2F(\bu_n(t))-\dot{r}(t)(2\bu_n(t) - \bv(t)), \bv(t)\big) dt\big] \\
& \,\,\,+ \E\big[\int_0^T e^{-r(t)}\big(2F(\bv(t)), \bu_n(t)-\bv(t)\big) dt\big] \\
&\,\,\,+ \e\, \E\big[\int_0^T e^{-r(t)}\int_{\mathbb Z} \big(2 g_n(t, \bu_n(t), z)- g_n(t, \bv(t), z), g_n(t, \bv(t), z)\big) \nu(dz)dt\big].
\end{align*}
Taking limit in $n$, using the result from \eqref{lsc} and rearranging, we obtain
\begin{align}
&E\big[\int_0^T e^{-r(t)}\big(2F_0(t)-2F(\bv(t)), \bu(t)-\bv(t)\big) dt\big] - E\big[\int_0^T e^{-r(t)}\dot{r}(t) |\bu(t) - \bv(t)|^2 dt\big] \notag\\
&\quad + \e \E\big[\int_0^T e^{-r(t)} \int_{\mathbb Z}  |G(t, z)- g(t, \bv(t), z)|^2 \nu(dz)dt\big] \le 0.\label{existence}
\end{align}
The above inequality holds by a density argument for any adapted process $\bv$ in $L^6(\Omega; L^{\infty}(0, T: H)) \cap L^2(\Omega; L^2(0, T; V))$ with 
\[
\mathbb{E} \{\int_0^T \norm{v(t)}_{L^4(G)}^4|v(t)|^2dt\} < \infty.
\]
Thus $\bu$ satisfies \eqref{existence}.

Notice that for $\bv=\bu$, $G(t)= g(t, \bu(t))$. Take $\bv = \bu - \lambda\bw$ with $\lambda >0$ and $\bw$ is an adapted process in $\mathrm{L}^2(\Omega; C(0, T; H) \cap \mathrm{L}^2(0, T; V))$ Then,
\begin{align*}
&\lambda E\big[\int_0^T e^{-r(t)}\big(2F_0(t)-2F(\bu - \lambda\bw)(t), \bw(t)\big) dt + \lambda\int_0^T e^{-r(t)}\dot{r}(t)|\bw(t)|^2 dt\big] \leq 0.
\end{align*}
Dividing by $\lambda$ on both sides of the inequality above and letting $\lambda$ go to $0$, one obtains
\begin{align*}
E\big[\int_0^T e^{-r(t)}\big(F_0(t)-F(\bu(t)), \bw(t)\big) dt\big] \leq 0.
\end{align*}
Since $\bw$ is arbitrary, we conclude that $F_0(t)=F(\bu(t))$. Thus the existence of a strong solution of the stochastic equation  has been proved.

If $\tilde\bu\in \mathrm{L}^2(\Omega; C(0, T; H) \cap \mathrm{L}^2(0, T; V))$ be another solution of the equation,  then $\bw=\bu -\tilde\bu$ solves the stochastic differential equation in $\mathrm{L}^2(\Omega;\mathrm{L}^2(0, T; V^{\prime}))$,
\begin{equation}\label{8}
d\bw(t) = (F(\bu(t)) - F(\tilde\bu(t)))dt + {\e}( g(t, \bu(t)) - g(t, \tilde\bu(t))) \tilde{N}^{\e^{-1}}(dt, dz).
\end{equation}
We denote $g_d = g(t, \bu(t), z) - g(t, \tilde\bu(t), z)$.

We now apply It\^{o} Lemma to the function $e^{-r(t)} |\bw(t)|^2$
and using  local monotonicity, we
get
\begin{align}
e^{-r(t)} d|\bw(t)|^2  &\leq  h(r) e^{-r(t)} |\bw(t)|^2  dt +  e^{-r(t)} \int_{\mathbb Z} \{2\e(g_d, \bw(t)) + \e^2|g_d|^2\}\tilde{N}^{\e^{-1}}(dt, dz) .
\end{align}
Thus we have
\begin{equation*}
d(e^{-r(t)}|\bw(t)|^2) \leq  h(r(t)) e^{-r(t)} |\bw(t)|^2 dt + \, e^{-r(t)} \int_{\mathbb Z} \{2\e (g_d, \bw(t)) + \e^2|g_d|^2\}\tilde{N}^{\e^{-1}}(dt, dz).
\end{equation*}
Finally integrating in $0\leq t\leq T$, taking expectation on both sides, we obtain P-a.s.
\begin{align*}
E\big[e^{-r(t)}|\bw(t)|^2\big] \leq C E|\bw(0)|^2,
\end{align*}
which assures the uniqueness of the strong solution.
\end{proof}

\section{Proof of LDP}
 \setcounter{equation}{0}
The aim of this section is to prove the LDP for $\{\bu^\ep : \e>0\}$ in
$D([0, T]; V^{\prime})$. To this end, we first summarize the general result from \cite{BDM09} that forms the backbone of the proof.

{\bf A general LDP result for functionals of PRM \cite{BDM09}}: Let $\mathbb{X}_T := \mathbb{Z} \times [0,T]$. For $M>0$, define $S^M := \{\varphi: \mathbb{X}_T \to [0, \infty): L_T(\varphi) \le M\}$. A function $\varphi \in S^M$ can be identified with a measure $\nu_T^{\varphi} \in \mathbb{M}$, defined by
\[
 \nu_T^{\varphi}(B) = \int_B \varphi(s, x) \nu(dx) ds, \;\;\forall \;\;B \in \mathcal{B}(\mathbb{X}_T). 
\]
Thus the usual topology of weak convergence of measures induces a topology on $S^M$ through this identification  under which $S^{M}$ is a compact space.  Notice that $\mathbb{S} = \cup_{M \ge 1}S^M$ (c.f. \eqref{eq:defS}).

Now let $\mathcal{P}$ be the predictable $\sigma$-field on $\Omega \times [0,T]$, that is, the $\sigma$-field generated by left-continuous adapted processes. Define $\bar{\mathcal{A}}_{b}$ as the set  controls $\vphi: \Omega \times \mathbb{X}_T \to [0,\infty)$ which are $\mathcal{P} \times \mathcal{B}(\mathbb Z) / \mathcal B([0,\infty))$-measurable and bounded. 

 Observe that the strong solution $\bu^\ep$  of \eqref{epsiloneqn} admits a representation $\bu^\ep = \mathcal{G}^{\e}\lf(\e N^{\e^{-1}}\ri)$
for a suitable Borel-measurable function $\mathcal{G}^{\e}: 
\mathbb{M} \to D([0, T]; V^{\prime})$. By the Girsanov's theorem it follows that for any finite $M > 0$ and $\varphi \in \mathcal{U}^M$, $\ue_{\varphi}  \doteq  \mathcal{G}^{\e}\lf(\e N^{\ep^{-1}\varphi}\ri)$
 is the unique strong solution of 
\begin{align}\label{11}
&d\ue_{\varphi} + \!\!\big[\nu \bA\ue_{\varphi} + \bB(\ue_{\varphi}, \ue_{\varphi})\big] dt = \!\{\f(t) + \!\!\!\int_{\mathbb Z} g(t, \ue_{\varphi}, z)(\varphi(t, z) - 1)\nu(dz)\}dt + \e\!\!\int_{\mathbb Z} g(t, \ue_{\varphi}, z) \tilde{N}^{\e^{-1}\varphi}(dt, dz)\\
& \text{with}\,\,\ue(0) = \xi \,\,\text{in}\,\, \mathrm{L}^2(\Omega; X), \,\text{where}\,\, X=D(0, T; V^{\prime}) \cap \mathrm{L}^2(0, T; V).\nonumber
\end{align}

%
%
%

For fixed $\e$ and $\varphi$ fixed, let $\ue_{\varphi,n}$ denote the $n$th Galerkin approximation of the solution, $\ue_{\varphi}$, of \eqref{11}.  Then we have the following estimate.

\begin{lemma}\label{scontrol}
There exists a positive constant $C_1(E|\bu_0|^2, \int_0^T \mathbb{E}|\f (s)|_{V\pr}^2ds, T)$ such that
\begin{equation}
  \sup_{0\leq\ep\leq 1}\sup_{\varphi \in \mathcal{U}^M}\lf\{\mathbb{E}\left(\sup_{0 \le t \le T}|\ue_{\varphi,n}(t)|^2\right) + \int_0^T \mathbb{E} \left(\norm{\ue_{\varphi,n}(s)}^2\right)ds\ri\} \le  C_1\label{sconteqnest}
\end{equation}
\end{lemma}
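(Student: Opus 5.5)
We apply It\^o's formula to $|\ue_{\varphi,n}(t)|^2$ for the Galerkin system associated with \eqref{11}. We localize with the stopping time $\tau_R=\inf\{t: |\ue_{\varphi,n}(t)|^2+\int_0^t\norm{\ue_{\varphi,n}(s)}^2ds>R\}$, which tends to $T$ as $R\to\infty$ because the finite-dimensional system is nonexplosive. By {\bf H.1} and {\bf H.3} we have $\langle \bA u,u\rangle=\norm{u}^2$ and $\langle \bB(u),u\rangle=0$, so the bilinear term drops out. This leaves
\begin{align*}
|\ue_{\varphi,n}(t)|^2+2\int_0^t\norm{\ue_{\varphi,n}}^2ds
&= |P_n\xi|^2+2\int_0^t\langle \f,\ue_{\varphi,n}\rangle ds
+2\int_0^t\!\!\int_{\mathbb Z}\big(g(s,\ue_{\varphi,n},z),\ue_{\varphi,n}\big)(\varphi-1)\nu(dz)ds\\
&\quad+\e\int_0^t\!\!\int_{\mathbb Z}|g_n(s,\ue_{\varphi,n}(s-),z)|^2 N^{\e^{-1}\varphi}(ds,dz)+\mathcal M_t,
\end{align*}
where $\mathcal M_t=2\e\int_0^t\int_{\mathbb Z}(g_n(s,\ue_{\varphi,n}(s-),z),\ue_{\varphi,n}(s-))\tilde N^{\e^{-1}\varphi}(ds,dz)$. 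We bound the forcing term by Young's inequality, $2\langle \f,u\rangle\le \norm{u}^2+|\f|_{V'}^2$, and absorb the $\norm{u}^2$ part into the left side.

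The main obstacle is the control term. It must be bounded uniformly over $\varphi\in\mathcal U^M$, where only $L_T(\varphi)\le M$ is known, so $\varphi$ itself need not be in $L^2$. By {\bf H.5}, the integrand is at most $2M_g(z)(1+|u|)^2|\varphi-1|$. We therefore need the standard estimate from \cite{BDM09}, which we would prove from {\bf H.8}:
\[
\sup_{\varphi\in S^M}\int_{\mathbb Z\times[0,T]}M_g(z)|\varphi(s,z)-1|\nu(dz)ds\le C_M<\infty .
\]
To prove it, split according to $\{|\varphi-1|\le\beta\}$, $\{\varphi>\beta\}$, and the set where $M_g$ is large. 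The first piece is handled by Cauchy--Schwarz together with $(\varphi-1)^2\le \kappa(\beta)\ell(\varphi)$. The second uses the Young-type inequality $ab\le e^{\d a}+\frac1\d \ell(b)$, and the exponential integrability in {\bf H.8} controls the resulting exponential integrals. The same computation shows that for every $\eta>0$ the interval $[0,T]$ can be cut into finitely many pieces, with their number depending only on $M,\eta$, on each of which this integral is $\le\eta$. We then use the time-dependent function $a_\varphi(s)=\int_{\mathbb Z} M_g|\varphi-1|\nu(dz)$, which has $\int_0^Ta_\varphi\le C_M$, and apply Gronwall with this integrable weight. This yields the bound $\exp(2C_M)$, uniform in $\varphi$.

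For the jump terms, the quadratic variation term has expectation $\E\int\int|g_n|^2\varphi\,\nu\,ds\le \E\int(1+|u|)^2(M_2^2+\int M_g^2|\varphi-1|\nu)ds$, which is of the same form. For $\E\sup_t|\mathcal M_t|$ we use Burkholder--Davis--Gundy:
\[
\E\sup_{t}|\mathcal M_t|\le C\e^{1/2}\,\E\Big(\sup_t|u|^2\int_0^T\!\!\int M_g^2(1+|u|)^2\varphi\,\nu\, ds\Big)^{1/2}\le \tfrac14\E\sup_t|u|^2+C\,\E\int_0^T(1+|u|^2)\Big(M_2^2+\int M_g^2|\varphi-1|\nu\Big)ds.
\]
We absorb $\tfrac14\E\sup_t|u|^2$ into the left side, using the localization to keep it finite, and use $\e\le1$. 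This gives
\[
\E\sup_{s\le t\wedge\tau_R}|u|^2+\E\int_0^{t\wedge\tau_R}\norm{u}^2\le C\Big(1+\E|\xi|^2+\int_0^T|\f|_{V'}^2\Big)+C\,\E\int_0^{t\wedge\tau_R}\big(1+\sup_{r\le s}|u(r)|^2\big)b_\varphi(s)\,ds,
\]
with $b_\varphi=M_2^2+\int(M_g+M_g^2)|\varphi-1|\nu$ and $\int_0^Tb_\varphi\le C_M$. The weight is deterministic when $\varphi$ is; for random $\varphi\in\mathcal U^M$ its total integral is a.s. bounded by $C_M$. The pathwise form of Gronwall therefore applies when we work on the subintervals of length making $\int b_\varphi\le \eta$ small and iterate over their finitely many pieces. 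Letting $R\to\infty$ by monotone convergence gives \eqref{sconteqnest}, with constants depending only on $M$, $M_2$, $\E|\xi|^2$, $\int_0^T|\f|_{V'}^2$ and $T$, and independent of $n$, $\e\le1$ and $\varphi$.
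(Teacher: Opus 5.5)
\textbf{There is a gap in your last step.} Once you take expectations, the right-hand side contains $\mathbb{E}\int_0^{t\wedge\tau_R}(1+\sup_{r\le s}|\bar{\mathbf u}(r)|^2)\,b_\varphi(s)\,ds$ with a random weight. Gronwall is therefore no longer available ``pathwise''. Your remedy is a deterministic partition of $[0,T]$ into short intervals on which $\int b_\varphi\,ds\le\eta$ uniformly over $\varphi\in S^M$. That needs a uniform small-time bound which \textbf{H.8} does not give for the $M_g^2$-part of $b_\varphi$.

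Lemma \ref{MLlemma}(iii) holds for the linear weight $M_g|\varphi-1|$ for a specific reason. On $\{\varphi>\beta\}$ one uses $ab\le\sigma^{-1}(e^{\sigma a}+\ell(b))$ with $\sigma\to\infty$, and $e^{\sigma M_g}$ is locally integrable for every $\sigma$ by \textbf{H.8}. For $a=M_g^2$ only $\sigma\le\delta$ is allowed. The entropy term $\sigma^{-1}L_T(\varphi)$, which can be as large as $M/\sigma\ge M/\delta$, then cannot be made small.

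Here is a concrete example. Take $\nu$ Lebesgue on $(0,1)$ and $M_g^2(z)=1+\delta_0^{-1}\log(1/z)$, so \textbf{H.8} holds with $\delta=\delta_0/2$. Let $\varphi=K$ on $[0,a_K]\times(0,1/K)$ and $\varphi=1$ elsewhere, with $a_K=MK/\ell(K)$. Then $L_T(\varphi)=M$ and $a_K\to0$, yet $\int_0^{a_K}\int M_g^2|\varphi-1|\,d\nu\,ds\to M/\delta_0$ as $K\to\infty$. This part of the weight enters with a factor of order $\varepsilon$, coming from the compensator and from your Young splitting of the martingale. That factor is not small for $\varepsilon$ near $1$, so absorption fails on every short deterministic interval once $M$ is large.

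Separately, nonexplosion of the Galerkin system is not known in advance. It follows from the $R$-uniform bound through $P(\tau_R<T)\le C/R$, as in the paper.

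\textbf{The idea is repairable within your framework.} Make the pieces random: $\sigma_0=0$ and $\sigma_{k+1}=\inf\{t>\sigma_k:\int_{\sigma_k}^t b_\varphi\,ds\ge\eta\}\wedge T$. These are stopping times because $\varphi$ is predictable. There are at most $C_M/\eta+1$ of them, because $\int_0^T b_\varphi\,ds\le C_M$ a.s. Run your BDG--Young absorption inductively up to $\sigma_{k+1}\wedge\tau_R$. Bound $\int_0^{\sigma_k}$ by $C_M$ times $1+\sup_{r\le\sigma_k}|\bar{\mathbf u}(r)|^2$, and $\int_{\sigma_k}^{\sigma_{k+1}}$ by $\eta$ times $1+\sup_{r\le\sigma_{k+1}}|\bar{\mathbf u}(r)|^2$. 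Alternatively, apply It\^o's formula to $e^{-c\int_0^t b_\varphi\,ds}|\bar{\mathbf u}(t)|^2$ so that the weighted terms cancel, and pay a factor $e^{cC_M}$ at the end.

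With that fix your route proves slightly more than the paper's. The paper applies Gronwall pathwise \emph{before} taking expectations, treating $\sup_t\bar M^\varepsilon_{t\wedge\tau_R}$ as forcing. Its only Gronwall factor is therefore the deterministic $e^{C_1}$, built from the uniform constants $C,D$ of Lemma \ref{MLlemma}. It then absorbs $\mathbb{E}\sup_t|\bar M^\varepsilon_{t\wedge\tau_R}|\le(\sqrt{\varepsilon}\,C_4+D\varepsilon)(1+\mathbb{E}\sup_t|\bar{\mathbf u}|^2)$. This requires $\varepsilon$ small, so the paper obtains the estimate only for $\varepsilon\le\varepsilon_0(M)$, not for all $\varepsilon\le1$ as stated. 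Your Young splitting is what removes the smallness of $\varepsilon$. Its price is exactly the random weight, which must be handled with stopping times or an integrating factor as above.
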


{\em Notation:} The {\it overbar} symbol will be used to denote controlled processes.  For notational convenience, in proofs of the above and subsequent results, we will often drop the control $\vphi$ from the notation and simply denote the controlled processes $\ue_{\varphi}, \ue_{\varphi,n}$ respectively by $\ue, \ue_{n}$.

\begin{proof}
 We use the notation $K := 2M_2^2$ in what follows. In equation \eqref{11}, applying  the It\^o formula to the function $|\ue_{n}|^2$ yields
\begin{align}
& (d|\ue_{n}(t)|^2  + 2 ||\ue_{n}(t)||^2dt)\notag \\
&= 2<\f (t),\ue_{n}(t)> dt + 2\int_{\mathbb Z} (g_n(t, \ue_{n}(t), z), \ue_{n}(t))
(\varphi(t, z) - 1)\nu(dz)dt\notag\\
&+ \int_{\mathbb Z} \{2\e (\ue_{n}(t- ), g_n(t, \ue_{n}(t-), z)) + \e^2 |g_n(t, \ue_n(t-), z)|^2\} \tilde{N}^{\e^{-1}\varphi}(dt, dz)\notag\\&
+ \e\int_{\mathbb Z} |g_n(t, \ue_n(t-), z)|^2 \varphi(t, z)\nu(dz)dt. \label{eqn3.4}
\end{align}
Define $\tau_R = \inf\{t: |\ue_{n}(t)|^2 +  \int_0^t\,||\ue_{n}(s)||^2\,ds  > R\}$.
In integral form, \eqref{eqn3.4} yields after an application of Young's inequality, 
\begin{align}
&|\ue_{n}(t\wedge \tau_R)|^2 + 2 \int_0^{t\wedge \tau_R}||\ue_{n}(s)||^2ds\nonumber\\
& \le  |\ue_{n}(0)|^2 + \int_0^{t\wedge \tau_R}(|\f(s)|^2_{V^{\pr}} +  ||\ue_{n}(s)||^2)ds + 2\int_0^{t\wedge \tau_R}\!\!\int_{\mathbb Z} (g_n(s, \ue_{n}(s), z), \ue_{n}(s))
(\varphi(s, z) - 1)\nu(dz)ds\notag\\
&+ \int_0^{t\wedge \tau_R}\!\!\int_{\mathbb Z} \{2\e (\ue_{n}(s-), g_n(t, \ue_{n}(s-), z)) + \e^2 |g_n(s, \ue_{n}(s-), z)|^2\} \tilde{N}^{\e^{-1}\varphi}(ds, dz)\notag\\
&
+ \e \int_0^{t\wedge \tau_R}\!\!\int_{\mathbb Z} |g_n(s, \ue_{n}(s), z)|^2 \varphi(s, z)\nu(dz)ds.
\end{align}
Fix any $t_1 \in (0, T]$, and take supremum in the above inequality over $t$ in the interval $[0, t_1]$. Using the growth condition {\bf{H.5}} on $g$,  we have
\begin{align}
&\sup_{0 \le t \le t_1} \{|\ue_{n}(t\wedge \tau_R)|^2 +  \int_0^{t\wedge \tau_R}||\ue_{n}(s)||^2ds\}\notag\\
&\le |\bu_n(0)|^2 + \int_0^{T}|\f(s)|^2_{V^{\pr}}ds 
+ 2\int_0^{t_1\wedge \tau_R}|\ue_{n}(s)|\int_{\mathbb Z} M_g(z)(1 + |\ue_{n}(s)|)
|\varphi(s, z) - 1|\nu(dz)ds\notag\\
&+  \sup_{0 \le t \le t_1}\bar M^\ep_{t\wedge \tau_R} + 
\e\int_0^{t_1\wedge \tau_R}\int_{\mathbb Z} M_g^2(z)(1 + |\ue_{n}(s)|)^2 \varphi(s, z)\nu(dz)ds.\label{zero}
\end{align}
where 
\[
\bar M^\ep_{t}  = \int_0^{t}\!\!\int_{\mathbb Z} \{2\e (\ue_{n}(s-), g_n(s, \ue_{n}(s-), z)) + \e^2 |g_n(s, \ue_{n}(s-), z)|^2\} \tilde{N}^{\e^{-1}\varphi}(ds, dz).
\]

The third term on the right side of \eqref{zero} is bounded above  by
\begin{align}
&4\int_0^{t_1\wedge \tau_R}(1 + \sup_{0 \le r \le s}|\ue_{n}(r)|^2)
\int_{\mathbb Z} M_g(z)|\varphi(s, z) - 1|\nu(dz) ds\notag\\
\le & 4C + 4\int_0^{t_1}\sup_{0 \le r \le s}|\ue_{n}(r\wedge \tau_R)|^2\int_{\mathbb Z} M_g(z)|\varphi(s, z) - 1|\nu(dz) ds\label{3rd}
\end{align}
where $C := (\sup_{\varphi \in S^M}\int_0^T\int_{\mathbb Z} M_g(z)|\varphi(s, z) - 1|\nu(dz)ds < \infty$ by part (ii) of Lemma \ref{MLlemma}.
An analogous upper bound for the last term on the right side of equation \eqref{zero} is obtained along similar lines except that part (i) of Lemma \ref{MLlemma} is used. Indeed, if we set 
\[
 D := \sup_{\varphi \in S^M}\int_0^T\int_{\mathbb Z} M_g^2(z)\varphi(s, z)\nu(dz)ds,
\]
then the last term in \eqref{zero} viz., 
\begin{align}
 &\e\int_0^{t_1\wedge \tau_R}\int_{\mathbb Z} M_g^2(z)(1 + |\ue_{n}(s)|)^2 \varphi(s, z)\nu(dz)ds\notag\\
\le & 2\e D + 2\e \int_0^{t_1\wedge \tau_R}\sup_{0 \le r \le s}|\ue_{n}(r\wedge \tau_R)|^2 \int_{\mathbb Z} M_g^2(z)\varphi(s, z)\nu(dz)ds.
\label{3rdpr}
\end{align}

Using the bounds \eqref{3rd} and \eqref{3rdpr} above in \eqref{zero}, one obtains
\begin{align}
 &\sup_{0 \le t \le t_1} \{|\ue_{n}(t\wedge \tau_R)|^2 \notag\\
\le & |\bu_n(0)|^2 + \int_0^{T}|\f(s)|^2_{V^{\pr}} ds + 4C + 2 \e D + 4 \int_0^{t_1} 
\sup_{0 \le r \le s} |\ue_{n}(r\wedge \tau_R)|^2 \int_{\mathbb Z} M_g(z)|\varphi(s, z) - 1|\nu(dz)ds\notag\\
& + 2\e\int_0^{t_1} 
\sup_{0 \le r \le s} |\ue_{n}(r\wedge \tau_R)|^2 \int_{\mathbb Z} M_g^2(z)\varphi(s, z)\nu(dz)ds
+  \sup_{0 \le t \le T} \bar M^\ep_{t\wedge \tau_R}. \label{gwall}
\end{align}
Let $C_1 := 4C + 2 \e D$. By Gronwall Lemma, it follows that 
\[
 \sup_{0 \le t \le t_1} \{|\ue_{n}(t\wedge \tau_R)|^2  \le \{|\bu_n(0)|^2 + \int_0^{T}|\f(s)|^2_{V^{\pr}} ds + C_1 + \sup_{0 \le t \le T} \bar M^\ep_{t\wedge \tau_R} \} e^{C_1t_1}.
\]
Setting $t_1 = T$ and taking expectation, 
\begin{align}
 &\mathbb{E} \left[\sup_{0 \le t \le T} |\ue_{n}(t\wedge \tau_R)|^2 \right]
\le \left[|\bu_n(0)|^2 + \int_0^{T}|\f(s)|^2_{V^{\pr}} ds + C_1 +  \mathbb{E}\left(\sup_{0 \le t \le T} \bar M^\ep_{t\wedge \tau_R}\right)\right] e^{C_1T}. \label{gronwall}
\end{align}
First, write 
$\bar M^\e_{t} 
=  L_{t} + N_{t}$\
where 
\[
 L_t = 2\e\int_0^{t}\int_{\mathbb Z}  (\ue_{n}(s-), z), g_n(s,\ue_{n}(s-), z)) \tilde{N}^{\e^{-1}\varphi}(ds, dz)
\]
and 
\[
J_t = \e^2 \int_0^{t}\int_{\mathbb Z} |g_n(s, \ue_{n}(s-), z)|^2 \tilde{N}^{\e^{-1}\varphi}(ds, dz).
\]
Observe that by Lenglart-Lepingle-Pratelli inequality
\begin{align}
 \mathbb{E}\left(\sup_{0 \le t \le T}|L_{t\wedge \tau_R}|\right)
&\le  
C_3 \mathbb{E} \left[ \left(\int_0^{T \wedge \tau_R}\int_{\mathbb Z} \{2\e (\ue_{n}(s-), g_n(s, \ue_{n}(s-), z))\}^2 \e^{-1}\varphi(s, z) \nu(dz)ds\right)^{1/2}\right]\notag\\
\le & 2\sqrt{\e} C_3 \mathbb{E} [1 + \sup_{0 \le t \le T}|\ue_{n}(s\wedge \tau_R)|^2] 
\left(\sup_{\varphi \in S^M} \int_0^T \int_{\mathbb Z} M_g^2(z)\varphi(s, z)\nu(dz)ds\right)^{1/2}\notag\\
\le & \sqrt{\e} C_4 (1 + \mathbb{E} [\sup_{0 \le t \le T}|\ue_{n}(s\wedge \tau_R)|^2]. \label{mg1}
\end{align}
\begin{align}
\mathbb{E}\left(\sup_{0 \le t \le T} J_{t\wedge \tau_R}\right)
\le & \e^2 \mathbb{E}\left( \int_0^{T \wedge \tau_R}\!\!\int_{\mathbb Z} |g_n(s, \ue_{n}(s-), z)|^2N^{\e^{-1}\varphi}\right)\notag\\
\le  & D  \e \{ 1 + \mathbb{E} [\sup_{0 \le t \le T}|\ue_{n}(s\wedge \tau_R)|^2]\}.
\label{mg2}
\end{align}
Using \eqref{mg1} and \eqref{mg2} in \eqref{gronwall}, and with $\e$ small enough, an upper bound for $\mathbb{E} \left(\sup_{0 \le t \le T\wedge \tau_R} |\ue_{n}(t)|^2\right)$ is obtained which is independent of $R$.  Using it back in  \eqref{zero}    we deduce get that 
\begin{align}\label{eq:preest}
\sup_{0\leq\ep\leq 1}\sup_{\varphi \in \mathcal{U}^M}\mathbb{E} \left\{\sup_{0 \le t \le T\wedge \tau_R} |\ue_{n}(t)|^2 + \int_0^{T \wedge \tau_R} \norm{\ue_{n}(s)}ds\right\} \leq C_1
\end{align}
for some constant $C_1$ independent of $R$. It follows that
\begin{align*}
P\{\tau_R < t\} &= P\{|\ue_{n}(t\wedge \tau_R)|^2 +  \int_0^{t\wedge \tau_R}||\ue_{n}(s)||^2ds \ge R\} \le \frac{K_1}{R} \to 0
\end{align*}
as $R \to \infty$. Hence if $\tau := \displaystyle{\lim_{R \to \infty}} \tau_R$, then $P\{ \tau \ge t \} = 1$  for any $0 \le t \le T$.
The assertion now follows from \eqref{eq:preest} by taking $R\rt \infty$.

%

\end{proof}
\begin{lemma}
For any $\varphi \in \mathcal{U}_M$, where $0 < M < \infty$, 
  the equation 
\begin{equation}
d\ubar_{\varphi}(t) + [\ A\ubar_{\varphi}(t) + B(\ubar_{\varphi}(t), \ubar_{\varphi}(t))] 
= [\f(t) + \int_{\mathbb Z} g(t, \bu_{\varphi}(t), z)(\vphi(t,z) - 1) \nu(dz)]dt,\label{13}
\end{equation}
with $\bu_{\varphi}(0) =
\xi\in H$, 
has a unique strong solution in $\mathrm{L}^2(\Omega; X)$, where
$X=C(0, T; H) \cap \mathrm{L}^2(0, T; V)$. 
 Moreover, there exists a constant $\mathcal{C}_1(T)$ such that
\begin{equation}
 \sup_{\varphi \in \mathcal{S}^M}\lf\{ \sup_{0 \le t \le T}|\ubar_{\varphi}(t)|^2 + \int_0^T\norm{\ubar_{\varphi}(s)}^2\ ds\ri\} \le  \mathcal{C}_1(T).
 \label{sconteqnest1}
\end{equation}

\end{lemma}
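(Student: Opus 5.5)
The controlled equation \eqref{13} is deterministic once $\varphi\in S^M$ is fixed. The plan is therefore the scheme of Section 3 with the noise removed and the control term treated as a perturbation of $\f$. I would first derive the a priori bound \eqref{sconteqnest1} for the Galerkin approximations $\ubar_n$, obtained by projecting \eqref{13} onto $H_n$. Next I would extract weak limits and identify the nonlinear term by local monotonicity {\bf H.7} with $\e=0$. Finally I would prove uniqueness by a Gronwall argument with the weight $e^{-r(t)}$.

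\emph{A priori estimate.} Taking the inner product of the Galerkin equation with $\ubar_n$ and using {\bf H.1}, {\bf H.3} (so $\langle \bB(\ubar_n),\ubar_n\rangle=0$) and Young's inequality on $\langle\f,\ubar_n\rangle$ gives
\[
|\ubar_n(t)|^2+\int_0^t\norm{\ubar_n(s)}^2ds\le |\xi|^2+\int_0^T|\f(s)|_{V\pr}^2ds+2\int_0^t\big(1+|\ubar_n(s)|^2\big)\int_{\mathbb Z}M_g(z)|\varphi(s,z)-1|\nu(dz)\,ds,
\]
where {\bf H.5} was used on the control term. The function $s\mapsto \int_{\mathbb Z}M_g|\varphi(s,\cdot)-1|d\nu$ has integral over $[0,T]$ bounded uniformly over $S^M$ by part (ii) of Lemma \ref{MLlemma}, the same fact already used in Lemma \ref{scontrol}. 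Gronwall's lemma then yields \eqref{sconteqnest1}, uniformly in $n$ and $\varphi\in S^M$. By {\bf H.2}, $|\bB(\ubar_n)|_{V\pr}\le c_1^{1/2}|\ubar_n|\norm{\ubar_n}$ is bounded in $L^2(0,T;V\pr)$. The control term $\int g(\ubar_n)(\varphi-1)d\nu$ is bounded in $L^1(0,T;H)$.

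\emph{Passage to the limit.} Along a subsequence, $\ubar_n\to\ubar$ weak-$*$ in $L^\infty(0,T;H)$ and weakly in $L^2(0,T;V)$, and $F(\ubar_n)\to F_0$ weakly in $L^2(0,T;V\pr)$. To pass to the limit in the control term I would use Theorem \ref{tightness}, noting that $\ubar_n$ is equicontinuous in $V\pr$ by the derivative bounds above, together with the Aubin--Lions argument. This gives $\ubar_n\to\ubar$ strongly in $L^2(0,T;H)$. Combined with {\bf H.6} and the bound $\int L_g|\varphi-1|d\nu\in L^1$, now via part (i)/(ii) of Lemma \ref{MLlemma} and a truncation on sets where $\varphi$ is large, the control term converges to $\int g(\ubar)(\varphi-1)d\nu$.

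\emph{Identification of $F_0$.} I would repeat the monotonicity argument of the existence theorem in Section 3 verbatim, deterministically and with $\e=0$ in {\bf H.7}. Take $r(t)=\int_0^t h(\norm{\bv(s)}_0)ds$, apply the energy equality to $e^{-r(t)}|\ubar_n|^2$, use lower semicontinuity of $|\ubar(T)|^2$, and then take $\bv=\ubar-\lambda\bw$ and let $\lambda\downarrow0$ using hemicontinuity from {\bf H.4}. This gives $F_0=F(\ubar)$. The energy equality for the limit gives $\ubar\in C([0,T];H)$ by Lions--Magenes.

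\emph{Uniqueness.} For two solutions, set $\bw=\ubar_1-\ubar_2$. By {\bf H.7},
\[
\frac{d}{dt}\big(e^{-r(t)}|\bw|^2\big)\le 2e^{-r(t)}\,|\bw|\int_{\mathbb Z} L_g|\varphi-1|\,d\nu\;|\bw|,
\]
where $r$ is built from $h(\norm{\ubar_2}_0)$; this is integrable since $\norm{\cdot}_0\le C_0\norm{\cdot}$ and $\ubar_2\in L^2(0,T;V)$, provided $h$ has at most quadratic growth, which I would need to assume or check in the examples. Gronwall's lemma then gives $\bw\equiv0$.

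The main obstacle is the convergence of the control term, because $\varphi-1$ is only controlled through $\ell(\varphi)$ and not in $L^2$. I would split $\mathbb Z\times[0,T]$ into $\{\varphi\le \beta\}$, where Cauchy--Schwarz with $L_g\in L^2(\nu)$ applies, and $\{\varphi>\beta\}$, where $\ell$-superlinearity together with {\bf H.8} makes the contribution small uniformly in $n$.
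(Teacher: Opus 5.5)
Your overall scheme is sound: Galerkin approximation, the energy bound made uniform over $S^M$ by Lemma \ref{MLlemma}(ii), compactness in $L^2(0,T;H)$ via Theorem \ref{tightness}, and passage to the limit in the control term. The paper omits the proof and refers to Capinski and Gatarek \cite{CG}, i.e.\ a Galerkin/compactness argument, so this is essentially the intended route.

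The step that does not go through as written is uniqueness. For the same reason, the density step $\bv=\ubar-\lambda\bw$ in your Minty-type identification of $F_0$ also fails. Both need $\int_0^T h(\norm{\ubar_2(s)}_0)\,ds<\infty$. Your proposed sufficient condition ($h$ of at most quadratic growth together with $\norm{\cdot}_0\le C_0\norm{\cdot}$) is not among the hypotheses, and it fails in the paper's main example. In Proposition \ref{prop1} one has $\norm{\cdot}_0=\norm{\cdot}_{L^4(G)}$ and $h(r)=r^4$. The embedding $\norm{\cdot}_0\le C_0\norm{\cdot}$ would then demand $\ubar_2\in L^4(0,T;V)$, which you do not have. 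Integrability there comes only from Ladyzhenskaya's inequality $\norm{v}_{L^4(G)}^4\le C|v|^2\norm{v}^2$ combined with the $L^\infty(0,T;H)$ bound, and nothing in \textbf{H.7} provides an abstract analogue.

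The remedy is to avoid \textbf{H.7} altogether, which you can afford because you already have strong convergence in $L^2(0,T;H)$. The nonlinear term then passes to the limit directly by \textbf{H.2}, exactly as in Step 1 of Theorem \ref{ThmLDP1}, so $F_0=F(\ubar)$ without any monotonicity argument. For uniqueness, \textbf{H.3} gives $\langle\bB(\ubar_1)-\bB(\ubar_2),\bw\rangle=\langle\bB(\bw,\ubar_2),\bw\rangle=-\langle\bB(\bw,\bw),\ubar_2\rangle$, and \textbf{H.2} bounds this by $\sqrt{c_1}\,|\bw|\,\norm{\bw}\,\norm{\ubar_2}$. This yields
\[
\frac{d}{dt}|\bw|^2+\norm{\bw}^2\le\big(c_1\norm{\ubar_2(t)}^2+2k_L(t)\big)|\bw|^2,\qquad k_L(t):=\int_{\mathbb Z}L_g(z)\,|\varphi(t,z)-1|\,\nu(dz).
\]
The weight is integrable because $\ubar_2\in L^2(0,T;V)$ and by Lemma \ref{MLlemma}(ii), so Gronwall gives $\bw\equiv 0$; this is the computation in \eqref{btforw}.

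Relatedly, for fixed $\varphi$ the control term is not the main obstacle you describe. It is a Lipschitz perturbation with $L^1$-in-time constant $k_L$, so strong $L^2(0,T;H)$ convergence, the uniform $L^\infty(0,T;H)$ bound and dominated convergence suffice, without further truncation. One more technical point: $\int_{\mathbb Z}M_g|\varphi(t,z)-1|\,\nu(dz)$ need not lie in $L^2(0,T)$. The energy equality and $\ubar\in C([0,T];H)$ therefore require the standard version of the Lions--Magenes lemma with time derivative in $L^2(0,T;V^{\prime})+L^1(0,T;H)$.
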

The proof is similar to that in Capinski and Gatarek \cite{CG}, and hence omitted.

\begin{theorem}\label{ThmLDP1}
Let $\varphi_n \to \varphi$ in $S^M$ as $n \to \infty$. Then $\ubar^{(n)} \doteq \ubar_{\vphi_n} \rt \ubar_{\vphi}$
in $C([0, T]; H)\cap L^2(0, T: V)$.  
\end{theorem}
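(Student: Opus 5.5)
The plan is to show continuity of the skeleton map $\vphi\mapsto\ubar_\vphi$ on $S^M$. I will combine compactness from the uniform bound \eqref{sconteqnest1} with an energy estimate for the difference $\bw_n := \ubar^{(n)}-\ubar_\vphi$. The estimate is driven by the monotonicity structure (H.2--H.4) and by the convergence $\vphi_n\to\vphi$ in $S^M$, tested against $g(t,\ubar_\vphi(t),z)$.

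\textbf{Step 1 (compactness).} From \eqref{sconteqnest1}, $\{\ubar^{(n)}\}$ is bounded in $C([0,T];H)\cap L^2(0,T;V)$. Equation \eqref{13} together with H.2 shows that $\partial_t\ubar^{(n)}$ is bounded in $L^{1}(0,T;V\pr)$; in fact it is bounded in $L^2(0,T;V\pr)$ up to the control term. For the control term I use Lemma \ref{MLlemma}, which gives
\[
\sup_{\vphi\in S^M}\int_{\mathbb Z} M_g|\vphi-1|\,d\nu\in L^1(0,T)
\]
with uniform integrability in time, so the family is equicontinuous in $V\pr$. By Arzel\`a--Ascoli and Theorem \ref{tightness}, a subsequence converges in $C([0,T];V\pr)$ and in $L^2(0,T;H)$ to some $\bu^*$, weakly in $L^2(0,T;V)$ and weak-$*$ in $L^\infty(0,T;H)$.

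\textbf{Step 2 (identification of the limit).} I will pass to the limit in \eqref{13} tested against $e_k$. The nonlinearity converges by strong $L^2(0,T;H)$ convergence and H.2 in the standard way. For the control term I split
\[
\int g(\ubar^{(n)})(\vphi_n-1)-\int g(\bu^*)(\vphi-1) = \int\big(g(\ubar^{(n)})-g(\bu^*)\big)(\vphi_n-1)+\int g(\bu^*)(\vphi_n-\vphi).
\]
The first piece is controlled by H.6 and Lemma \ref{MLlemma}:
\[
\sup_s|\ubar^{(n)}-\bu^*|\cdot\sup_{\psi\in S^M}\int L_g|\psi-1|\,d\nu\,ds .
\]
Here I only have $L^2(0,T;H)$ convergence, so I would instead bound it using the uniform-in-$\psi$ smallness of $\int_A L_g|\psi-1|$ over sets $A$ of small time-measure; this is standard, in the style of BDM. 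The second piece tends to zero by the weak convergence $\vphi_n\to\vphi$ together with H.8, which yields uniform integrability of $g(\bu^*)$ against $\vphi_n$; this is the usual BDM truncation argument. Uniqueness for \eqref{13} then gives $\bu^*=\ubar_\vphi$, so the whole sequence converges.

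\textbf{Step 3 (upgrade to the strong norm).} I apply the chain rule to $|\bw_n|^2$. By H.3, $\langle\bB(\ubar^{(n)})-\bB(\ubar_\vphi),\bw_n\rangle=\langle\bB(\bw_n,\ubar_\vphi),\bw_n\rangle$. Then H.2 and Young's inequality bound this by $\tfrac12\|\bw_n\|^2+c|\bw_n|^2\|\ubar_\vphi\|^2$ (as in 2D Navier--Stokes; alternatively I would use H.7 with $\bv=\ubar_\vphi$). This gives
\[
|\bw_n(t)|^2+\int_0^t\|\bw_n\|^2\le \int_0^t\big(c\|\ubar_\vphi\|^2+2\textstyle\int L_g|\vphi_n-1|d\nu\big)|\bw_n|^2ds+2\sup_{t}\Big|\int_0^t\big(\textstyle\int g(\ubar_\vphi)(\vphi_n-\vphi)d\nu,\bw_n\big)ds\Big|.
\]
Gronwall's inequality applies, since the coefficient is integrable uniformly in $n$ by Lemma \ref{MLlemma}. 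The proof therefore reduces to showing that the last term tends to zero.

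\textbf{Main obstacle.} The hard part is this last term, because $\vphi_n-\vphi$ converges only weakly while it is paired with the $n$-dependent $\bw_n$. My approach is to write $\bw_n=\ubar^{(n)}-\ubar_\vphi$ and use Step 1 ($\bw_n\to0$ in $L^2(0,T;H)$ and in $C([0,T];V\pr)$). I will approximate $\bw_n$ by its projection $P_k\bw_n$ and a piecewise-constant-in-time interpolation. On the finite-dimensional, time-discretized part the weak convergence of $\vphi_n$ kills the term. The remainder is bounded by $\sup_n\int\!\!\int M_g|\vphi_n-\vphi|\,\nu\,ds$ times a uniformly small quantity, using the equicontinuity from Step 1 and the uniform integrability from H.8 and Lemma \ref{MLlemma}. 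An alternative is to show that $\Psi_n(t):=\int_0^t\int g(\ubar_\vphi)(\vphi_n-\vphi)\nu\,ds\to0$ uniformly in $H$, which follows from a compactness argument in $C([0,T];H)$, and then integrate by parts in time against $\bw_n$ using equation \eqref{13} for $\partial_t\bw_n$.
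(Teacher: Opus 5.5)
Your overall architecture matches the paper's: the uniform bound \eqref{sconteqnest1}, compactness in $C([0,T];V')$ and $L^2(0,T;H)$ via Theorem \ref{tightness}, identification of the limit, then an energy estimate for $\bw_n$ using H.3 and Gronwall. Your identification step splits the control term differently, and your split is cleaner. You write it as $\bigl(g(\ubar^{(n)})-g(\bu^*)\bigr)(\varphi_n-1)+g(\bu^*)(\varphi_n-\varphi)$. Then the weak convergence of $\varphi_n$ is only tested against the fixed integrand $(g(s,\bu^*(s),z),e_k)$, so Lemma \ref{hlemma}(i) suffices, with H.8 supplying the exponential integrability. The first piece is handled by H.6 plus time-splitting with Lemma \ref{MLlemma}(iii). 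The paper splits the other way, $J(\ubar^{(n)},\varphi_n)-J(\ubar^{(n)},\varphi)$. It therefore pairs an $n$-dependent integrand with $\varphi_n$, which forces the truncations, a.s.\ convergence along a further subsequence, and the Bou\'e--Dupuis relative-entropy lemma.

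In Step 3, however, the ``main obstacle'' is not an obstacle. Neither of your proposed remedies is needed, and neither is quite right as stated. Simply take absolute values:
\[
\Bigl|\int_0^t\Bigl(\int_{\mathbb Z} g(s,\ubar_\varphi(s),z)(\varphi_n-\varphi)\,\nu(dz),\bw_n(s)\Bigr)ds\Bigr|\le \bigl(1+\sup_s|\ubar_\varphi(s)|\bigr)\int_0^T|\bw_n(s)|\,a_n(s)\,ds,
\]
with $a_n(s)=\int_{\mathbb Z}M_g(z)\bigl(|\varphi_n-1|+|\varphi-1|\bigr)\nu(dz)$. You know $\bw_n\to0$ in $L^2(0,T;H)$ and $\sup_n\sup_t|\bw_n(t)|<\infty$. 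Split $[0,T]$ into $\{|\bw_n|>\delta\}$, whose Lebesgue measure is at most $\delta^{-2}\|\bw_n\|^2_{L^2(0,T;H)}\to0$, and its complement. Lemma \ref{MLlemma}(ii)--(iii), uniformly over $S^M$, then shows the right side tends to $0$. This is the very argument you already used in Step 2, and exactly how the paper disposes of its term $I_1$. Weak convergence of $\varphi_n$ plays no role in Step 3.

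By contrast, your projection/interpolation sketch claims the remainder is a uniformly small quantity times $\sup_n\iint M_g|\varphi_n-\varphi|$. But $(I-P_k)\bw_n(s)$ is small only in $L^2$ in time (from the $L^2(0,T;V)$ bound), not uniformly in $s$. So you would need the same time-splitting anyway, at which point it can be applied to $\bw_n$ directly. The integration-by-parts alternative would require pairing $\bA\bw_n$ with $\Psi_n$, which is only $H$-valued, not $V$-valued, so that pairing is undefined.
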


\begin{proof}
We first show that the sequence $\{\ubar^{(n)}\}$ is tight in $C([0,T],H)$. To see this note that
\begin{align*}
|\ubar_n(t+h) - \ubar(t)| \leq & \int_{t}^{t+h}\left(|A\ubar_n(s)|+|B(\ubar_n(s), \ubar_n(s))| + |f(s)|\right) ds \\
&+ \lf|\int_{t}^{t+h} g(s, \ubar_n(s), z) (\vphi_n(s,z)-1)\ ds\ri|
\end{align*}	
{\sc Step 1:} By a priori estimates, we know that by Banach-Aloaglu theorem, there exists a $\bv \in L^{\infty}(0, T; H) \cap L^2(0, T; V)$  such that 
along a subsequence $\{n_j\}$, 
\begin{enumerate}
\item[(i)] $\ubar^{(n_j)} \to \bv$ in the weak-star  topology in $L^{\infty}(0, T; H)$,
\item[(ii)] $\ubar^{(n_j)} \to \bv$ weakly  in $L^{2}(0, T; V)$.  
\end{enumerate}
The limit $\bv$ is the one and the same in both topologies because of the imbedding $V \subset_{\to} H$. 

From this, we deduce that 
 $\ubar^{(n_j)} \to \bv$ strongly in $L^2(0, T; H)$ by Theorem \ref{tightness}. 
  Indeed,  each  element of the sequence $\{\ubar^{(n_j)}\}$ belongs to $L^2(0, T; H)\cap C([0, T]; V^{\prime})$ and the sequence is bounded in $L^2(0, T; V)$. In order to to invoke the tightness criterion given in Theorem \ref{tightness},
  we will show that the sequence $\{\ubar^{(n_j)}\}$ is precompact in $C([0, T]; V^{\prime})$ by the Aldous criterion in the course of this proof.  Hence, the set is precompact in $L^2(0, T; H)$, and the strong convergence of a subsequence to $\bv$ in $L^2(0, T: H)$ follows automatically.

It remains to show that $\bv = \ubar_\vphi$. Since the limit is unique, we obtain convergence of the entire sequence. 
 Towards this end, by a slight abuse of notation, we will continue to denote the subsequence $\{n_j\}$ simply by $\{n\}$.
Fix any positive integer $k$, and $ h \in C^1([0, T])$. Without loss of generality, let us take $\norm{h}_{\infty} = 1$. 
Then, for a.e. $t$ in $[0, T]$, an integration by parts yields
\begin{align}
& - \int_0^t (\ubar^{(n)}(s), h^{\prime}(s) e_k)ds +  \int_0^t((\ubar^{(n)}(s), h(s)e_k)) ds +  \int_0^t \langle \bB(\ubar^{(n)}(s), \ubar^{(n)}(s)), h(s) e_k\rangle ds\notag\\
& =  h(0)(\ubar^{(n)}(0),  e_k) - h(t)(\ubar^{(n)}(t), e_k) + \int_0^t \int_{\mathbb Z} (\varphi_{n}(s, z) - 1) (g(s, \ubar^{(n)}(s), z), h(s) e_k)\nu(dz)ds.\label{ktheqn}
\end{align}
By weak-star convergence in $L^{\infty}(0, T; H)$, we obtain that 
\begin{equation}
\int_0^t (\ubar^{(n)}(s), h^{\prime}(s) e_k)ds \to \int_0^t (\bv(s), h^{\prime}(s) e_k)ds.\label{ft}
\end{equation}
Next, by weak convergence of $\ubar^{(n)}$  in $L^2(0, T; V)$, 
\begin{equation}
\lim_{n \to \infty} \int_0^t((\ubar^{(n)}(s), h(s) e_k)) ds = \int_0^t((\bv(s), h(s) e_k)) ds. \label{st}
\end{equation}

\par
To establish  the convergence of the third term on the left side of \eqref{ktheqn}, we define 
$\bw^{(n)} = \ubar^{(n)} - \bv$  and obtain  
\begin{align*}
& |\langle \bB(\ubar^{(n)}(s),\ubar^{(n)}(s)) - \bB(\bv(s), \bv(s)), h(s) e_k\rangle| \\
& \le  C_k(\norm{\bB(\bv(s), \bw^{(n)}(s))}_{_{V\pr}} + \norm{\bB(\bw^{(n)}(s), \bv(s))}_{_{V\pr}} + \norm{\bB(\bw^{(n)}(s), \bw^{(n)}(s))}_{_{V\pr}})\\
& \le C_k [2(|\bv(s)|\,\norm{\bv(s)} \,|\bw^{(n)}(s)| \,\norm{\bw^{(n)}})^{1/2} +  |\bw^{(n)}(s)|\,\norm{\bw^{(n)}(s)}]\\
& \le C_k[\e |\bv(s)|\,\norm{\bv(s)} + (1 + \frac{1}{\e}) |\bw^{(n)}(s)| \,\norm{\bw^{(n)}}]
\end{align*}
for any $\e > 0$. 
Hence, 
\begin{align}
&\int_0^t\,|\langle \bB(\ubar^{(n)}(s), \ubar^{(n)}(s)) - \bB(\bv(s), \bv(s)), h(s)e_k\rangle|ds|\notag\\
& \le C_k \int_0^t\, \left[\e \norm{\bv(s)}^2  + (1 + \frac{1}{\e}) |\bw^{(n)}(s)|\,\norm{\bw^{(n)}(s)}\right]ds
\notag\\
& \le  C_k \left[ \e \norm{\bv}^2_{L^2([0, T]; V)} +  (1 + \frac{1}{\e})\norm{\bw^{(n)}}_{L^2([0, T]; V)}\,\norm{\bw^{(n)}}_{L^2([0, T]; H)}\right].\label{tht}
\end{align}
By  the boundedness of $  \bw^{(n)}$ in $L^2([0, T]; V)$
 uniformly in $n$,  and  strong convergence to zero in $L^2([0, T]; H)$, the second term on the right side of equation \eqref{tht}  converges to $0$ upon letting $n$  to $\infty$. Then let $\e$ to $0$ to conclude that
\[ \lim_{n \to \infty} \int_0^t\,|\langle \bB(\ubar^{(n)}(s), \ubar^{(n)}(s)) - \bB(\bv(s), \bv(s)), h(s)e_k\rangle|ds| = 0.
\]
Consider the first two terms on the right side of equation \eqref{ktheqn}.
The convergence of $h(0)(\ubar^{(n)}(0),  e_k) - h(t)(\ubar^{(n)}(t), e_k) $ to $h(0)(\xi,  e_k) - h(t)(\bv(t), e_k)$
as $n \rt \infty$ follows easily for a.e. $t \in [0, T]$ by the convergence of the subsequence $\{\ubar^{(n)}\}$ 
in $D([0, T]; V\pr)$. 

{\sc Step 2:} Denote the integral on the right side of \eqref{ktheqn} by $J(\ubar^{(n)}, \varphi_{n})$, and we show that along a subsequence 
$J(\ubar^{(n)}, \varphi_{n}) \to J(\bv, \varphi)$ a.s. 
First, we will show that along a subsequence 
\begin{equation}
\lim_{n \to \infty} [J(\ubar^{(n)}, \varphi_{n})(t) - J(\ubar^{(n)}, \varphi)(t)] = 0\,\,\text{a.s}. \label{firstpart}
\end{equation}
Indeed by  \eqref{h2}, given any $\e > 0$, there exists a compact set $K_{\e} \subset Z$ such that 
\begin{align}
&|\int_0^t \int_{K_{\e}^c} (\varphi_{n}(s, z) - 1) (g(s, \ubar^{(n)}(s), z), h(s)e_k)\nu(dz)ds| \notag\\
& \le \int_0^T \int_{K_{\e}^c} |\varphi_{n}(s, z) - 1| \,|g(s, \ubar^{(n)}(s), z)|\nu(dz)ds\notag\\
& \le \int_0^T \int_{K_{\e}^c} |\varphi_{n}(s, z) - 1| \,M_g(z) (1 + |\ubar^{(n)}(s)|)\nu(dz)ds\notag\\
& \le (1 + \sup_{0 \le s \le T} |\ubar^{(n)}(s)|)\,\e. \label{fp1}
\end{align}
Likewise, 
\begin{equation}
|\int_0^t \int_{K_{\e}^c} (\varphi(s, z) - 1) (g(s, \ubar^{(n)}(s), z), h(s) e_k)\nu(dz)ds| 
\le (1 + \sup_{0 \le s \le T} |\ubar^{(n)}(s)|)\,\e. \label{fp1.5}
\end{equation}
Next, for any fixed $R > 0$, consider 
\begin{align}
& |\int_0^t \int_{K_{\e}} g(s, \ubar^{(n)}(s), z) \varphi_{n}(s, z) 1_{\{M_g(z) \ge R\}} \nu(dz)ds|\notag\\
& \le (1 + \sup_{0 \le s \le T} |\ubar^{(n)}(s)|) \int_0^T \int_{K_{\e}}M_g(z) \,\varphi_{n}(s, z) \,1_{\{M_g(z) \ge R\}} \nu(dz)ds\notag\\
& \le (1 + \sup_{0 \le s \le T} |\ubar^{(n)}(s)|)\,\e \,\,\,\,\text{if $R$ is large enough by invoking \eqref{h3};}\nonumber\\
& \le C\e. \label{fp2}
\end{align} 
Likewise,
\begin{equation}
|\int_0^t \int_{K_{\e}} g(s, \ubar^{(n)}(s), z) \varphi(s, z) 1_{\{M_g(z) \ge R\}} \nu(dz)ds| \le  C\,\e.\label{fp3}
\end{equation}
Finally, on $[0, T]\times Z$ consider the function $(g(s, \ubar^{(n)}(s), z), e_k) 1_{\{M_g(z) < R\}}$.
Clearly, 
\[
|(g(s, \ubar^{(n)}(s), z), e_k)| 1_{\{M_g(z) < R\}} \le C\,R.
\]
Also, $|(g(s, \bv, z), e_k)| 1_{\{M_g(z) < R\}} \le C\,R$. 
By local compactness of $Z$, there exists a function $\beta$ in $C_c(Z; [0, 1])$ such that $\beta = 1$ on $K_{\e}$. Define $d\mu_1 := \beta(z) \nu(dz)ds$ on $Z \times [0, T]$, and the normalized measure $\mu := \mu_1/(\mu_1(Z \times [0,T]))$. Let $K$ denote the support of $\beta$ and $\kappa$, the normalizing constant. 
 Then 
\begin{align*}
 \int_0^T \int_{Z} |g(s, \bu_{j\pr}(s), z) - g(s, \bv(s), z)| d\mu
 &\le  \int_0^T \int_{Z} L_g(z) |\bu_{j\pr}(s) - \bv(s)|  d\mu. \\
& \le \frac{1}{\kappa} L_1\sqrt{T}\,\norm{\bu_{j\pr} - \bv}_{L^2([0, T] ; H)}\end{align*}
which $\to 0$ as $j\pr \to \infty$. Hence, there exists a subsequence $\{i\pr\}$ of $\{j\pr\}$ such that 
\[
\lim_{i\pr \to \infty} (g(s, \bu_{i\pr}(s), z), h(s) e_k)1_{\{M_g(z) < R\}} =  (g(s, \bv(s), z), h(s) e_k)1_{\{M_g(z) < R\}} \,\,\,\,\mu\,\,\text{a.s.}
\] 
 Hence, 
\[
\lim_{i\pr \to \infty} (g(s, \bu_{i\pr}(s), z), h(s) e_k)1_{\{M_g(z) < R\}}1_{K_{\e}}(z) =  (g(s, \bv(s), z), h(s) e_k)1_{\{M_g(z) < R\}}1_{K_{\e}}(z) \,\,\,\,\mu\,\,\text{a.s.}
\] 
 Let us  define the measure $\eta_{i\pr}(A) := \int_A \varphi_{i\pr}(s, z)\beta(z) \nu(dz)ds$ for all Borel sets 
$A \subseteq [0, T]\times Z$.  Let $\rho_{i\pr}$ denote the corresponding normalized measure, 
that is, 
\[
\rho_{i\pr} = \frac{\eta_{i\pr}}{m_{i\pr}}
\]
where $m_{i\pr} = \int_0^T \int_{\mathbb Z} \varphi_{i\pr}(s, z)\beta(z) \nu(dz)ds$. 

Likewise, define $\rho$ and $m$ when $\varphi$ is used  instead of $\varphi_{i\pr}$.   Then, denoting 
the relative entropy  of $\rho_{i\pr}$ with respect to $\mu$ by $R(\rho_{i\pr} || \mu)$, we have
\[
\sup_{i\pr} R(\rho_{i\pr} || \mu) < \infty
\]
since $m_{i\pr} \to m$ by using the fact that  $\mathcal{M}_{FC}(Z)$ is endowed with vague topology (see Budhiraja, Chen and Dupuis \cite{BCD}). Also, $\rho_{i\pr} \to \rho$ weakly since $\beta$ is in $C_c(Z; [0, 1])$ and $m_{i\pr} \to m$. Hence, by Lemma 2.8 in Bou\'e and Dupuis \cite{BD},
we conclude that
\begin{align}
&\lim_{i\pr \to \infty}\frac{1}{m_{i\pr}}\int_0^t\int_{\mathbb Z}  (g(s, \bu_{i\pr}(s), z), h(s)e_k)1_{\{M_g(z) < R\}}1_{K_{\e}}(z)\varphi_{i\pr}(s, z)\beta(z)\nu(dz)ds \notag\\
&= 
 \frac{1}{m}\int_0^t\int_{\mathbb Z}  (g(s, \bv(s), z), h(s)e_k)1_{\{M_g(z) < R\}}1_{K_{\e}}(z)\varphi(s, z)\beta(z)\nu(dz)ds.\label{fp4}
\end{align}
Analogously, 
\begin{align}
&\lim_{i\pr \to \infty}\frac{1}{m_{i\pr}}\int_0^t\int_{K_{\e}} (g(s, \bu_{i\pr}(s), z), h(s)e_k)1_{\{M_g(z) < R\}}
\varphi(s, z)\nu(dz)ds \notag\\
&= \frac{1}{m}\int_0^t\int_{K_{\e}} (g(s, \bv(s), z), h(s)e_k)1_{\{M_g(z) < R\}}\varphi(s, z)\nu(dz)ds.\label{fp5}
\end{align}
Hence, by \eqref{fp4} and \eqref{fp5}, given any $\d > 0$, there exists a large $N$ such that if $i\pr \ge N$, then 
\begin{equation}
|\int_0^t\int_{K_{\e}} (g(s, \bu_{i\pr}(s), z), e_k)1_{\{M_g(z) < R\}}\left(\varphi_{i\pr}(s, z) - \varphi(s, z)\right) \nu(dz)ds| < \d.\label{fp}
\end{equation}
By using the estimates \eqref{fp1}, \eqref{fp1.5}, \eqref{fp2}, \eqref{fp3} and \eqref{fp}, the proof of \eqref{firstpart} is over.

\par
Next, we need to show that 
\begin{equation}
\lim_{i\pr \to \infty} [J(\bu_{i\pr}, \varphi)(t) - J(\bv, \varphi)(t)] = 0.\label{secondpart}
\end{equation}
Towards this, fix any $\d  > 0$ and define the set 
\[
A_{i\pr} : = \{t \in [0, T]: |\bu_{i\pr}(t) - \bv(t)| > \d\}.
\]
Then, with $\l$ denoting  Lebesgue measure,
\[
\l(A_{i\pr})  \le  \frac{1}{\d^2}\int_0^T |\bu_{i\pr}(t) - \bv(t)|^2 dt 
\]
which converges to $0$ as $i\pr \to \infty$.  Consider 
\begin{align}
&| J(\bu_{i\pr}, \varphi)(t) - J(\bv, \varphi)(t)|\notag\\
&\le \int_0^T \int_{\mathbb Z} |(g(s, \bu_{i\pr}(s), z) - g(s, \bv(s), z)| \,| |\varphi(s, z) - 1|\nu(dz)ds\notag\\
& \le \int_0^T \int_{\mathbb Z} L_g(z) |\bu_{i\pr}(s) - \bv(s)|\, |\varphi(s, z) - 1|\nu(dz)ds\notag\\
& \le 2K \int_{A_{i\pr}}\int_{\mathbb Z} L_g(z)\,|\varphi(s, z) - 1|\nu(dz)ds + \d \int_{A^c_{i\pr}}\int_{\mathbb Z} L_g(z)\,|\varphi(s, z) - 1|\nu(dz)ds\notag\\
& \le  2K \int_{A_{i\pr}}\int_{\mathbb Z} L_g(z)\,|\varphi(s, z) - 1|\nu(dz)ds +
\d \int_0^T\int_{\mathbb Z} L_g(z)\,|\varphi(s, z) - 1|\nu(dz)ds \label{timesplit} 
\end{align}
with $K := \sup_{i\pr}  |\bu_{i\pr}|_{L^{\infty}([0, T]; H)} \vee |\bv|_{L^{\infty}([0, T]; H)}$.
In \eqref{timesplit},  use \eqref{mlcond2} and \eqref{mlcond3} while 
allowing  $i\pr \to \infty$, and then letting $\d \to 0$ to conclude that
\[
| J(\bu_{i\pr}, \varphi)(t) - J(\bv, \varphi)(t)| \to 0.
\]
 From \eqref{firstpart} and \eqref{secondpart}, we obtain 
\begin{equation}
\int_0^T \int_{\mathbb Z} (\varphi_{i\pr}(s, z) - 1) (g(s, \bu_{i\pr}(s), z), h(s)e_k)\nu(dz)ds \to \int_0^T \int_{\mathbb Z} (\varphi(s, z) - 1) (g(s, \bv(s), z), h(s)e_k)\nu(dz)ds
\end{equation}
a.s. as $i\pr \to \infty$. This along with \eqref{ft}, \eqref{st}, and \eqref{tht} proves that 
$\bv = \bu$ on $[0, T]$ where $\bu$ is shorthand for $\bu_{\varphi}$. 

\sni
{\sc Step 3:} With $\bw_{i\pr} : = \bu_{i\pr} - \bu$, the energy equality yields
we have 
\begin{align}
&|\bw_{i\pr}(t)|^2 + 2\nu \int_0^t \norm{\bw_{i\pr}(s)}^2ds  + 2\int_0^t \langle \bB(\bu_{i\pr}(s), \bu_{i\pr}(s)) - \bB(\bu(s), \bu(s)), \bw_{i\pr}(s)\rangle ds \notag\\
& =  2\int_0^t \int_{\mathbb Z} (g(s, \bu_{i\pr}(s), z)(\varphi_{i\pr}(s, z) - 1) - g(s, \bu(s), z)(\varphi(s, z) - 1), \bw_{i\pr}(s))\nu(dz)ds.\label{weqn}
\end{align}
Consider the expression
\begin{align}
&|\langle \bB(\bu_{i\pr}(s), \bu_{i\pr}(
s)) - \bB(\bu(s), \bu(s)), \bw_{i\pr}(s)\rangle|\notag\\
& = | \langle \bB(\bu(s), \bw_{i\pr}(s)) + \bB(\bw_{i\pr}(s), \bu(s))  + \bB(\bw_{i\pr}(s), \bw_{i\pr}(s)), \bw_{i\pr}(s)\rangle |\notag\\
& = |  \langle \bB(\bw_{i\pr}(s), \bu(s)), \bw_{i\pr}(s) \rangle |\notag\\
& \le C |\bw_{i\pr}(s)| \,\norm{\bw_{i\pr}(s)}\,\norm{\bu(s)}\notag\\
& \le \frac{1}{2} \norm{\bw_{i\pr}(s)}^2 + \frac{C^2}{2}|\bw_{i\pr}(s)|^2 \,\norm{\bu(s)}^2.\label{btforw}
\end{align}
Next, we write 
\[
\int_0^t \int_{\mathbb Z} (g(s, \bu_{i\pr}(s), z)(\varphi_{i\pr}(s, z) - 1) - g(s, \bu(s), z)(\varphi(s, z) - 1), \bw_{i\pr}(s))\nu(dz)ds = I_1 + I_2,\,\,\,\text{where}
\]
\begin{align*}
I_1 &= \int_0^t \int_{\mathbb Z} (g(s, \bu_{i\pr}(s), z)(\varphi_{i\pr}(s, z) - 1) - g(s, \bu_{i\pr}(s), z)(\varphi(s, z) - 1), \bw_{i\pr}(s))\nu(dz)ds,\,\,\text{and}\\
I_2 &= \int_0^t \int_{\mathbb Z} (\{g(s, \bu_{i\pr}(s), z) - g(s, \bu(s), z)\}(\varphi(s, z) - 1), \bw_{i\pr}(s))\nu(dz)ds
\end{align*}
\begin{align*}
|I_1|  \le &\int_0^t \int_{\mathbb Z} | (g(s, \bu_{i\pr}(s), z)(\varphi_{i\pr}(s, z) - 1), \bw_{i\pr}(s))|\nu(dz)ds\\ &+ \int_0^t \int_{\mathbb Z} | (g(s, \bu_{i\pr}(s), z)(\varphi(s, z) - 1), \bw_{i\pr}(s))|\nu(dz)ds,
\end{align*}
and we denote the right side of the above inequality by $J_1 + J_2$. Then,
\begin{align}
J_1 & \le \int_0^T \int_{\mathbb Z} |g(s, \bu_{i\pr}(s), z)| \,|(\varphi_{i\pr}(s, z) - 1)| \,|\bw_{i\pr}(s))|\nu(dz)ds\notag\\
& \le \int_0^T \int_{\mathbb Z} M_g(z)(1 + |\bu_{i\pr}(s)|) \,|(\varphi_{i\pr}(s, z) - 1)| \,|\bw_{i\pr}(s))|\nu(dz)ds\notag\\
& \le  \sup_{0 \le s \le T} (1 + |\bu_{i\pr}(s)|)  \int_0^T \int_{\mathbb Z} M_g(z) |(\varphi_{i\pr}(s, z) - 1)| \,
|\bw_{i\pr}(s))|\nu(dz)ds\notag\\
&\le  K \int_0^T \int_{\mathbb Z} M_g(z) |(\varphi_{i\pr}(s, z) - 1)| \,
|\bw_{i\pr}(s))|\nu(dz)ds\label{j1forw}
\end{align}
which $ \to 0$ as ${i\pr} \to \infty$ by using the method employed that led to the convergence of  \eqref{timesplit} to zero.
Likewise,  $J_2 \to 0$ as $i\pr \to \infty$. Hence, given any $\d > 0$, there exists 
a large enough $N$ such that if $i\pr \ge N$, then $I_1 \le \d$.

Next, consider 
\begin{align}
I_2& \le \int_0^t \int_{\mathbb Z} |g(s, \bu_{i\pr}(s), z) - g(s, \bu(s), z)|\,|(\varphi(s, z) - 1)|\, |\bw_{i\pr}(s))|\nu(dz)ds\notag\\
& \le \int_0^t \int_{\mathbb Z} L_g(z) \,|(\varphi(s, z) - 1)|\, |\bw_{i\pr}(s))|^2\nu(dz)ds\label{itwo}
\end{align}
Using \eqref{btforw}, \eqref{j1forw}, \eqref{itwo} in \eqref{weqn}, we obtain for all 
$i\pr \ge N$, 
\begin{align}
&|\bw_{i\pr}(t)|^2 +  \int_0^t \norm{\bw_{i\pr}(s)}^2ds \notag\\
&\le C^2 \int_0^t |\bw_{i\pr}(s)|^2 \,\norm{\bu(s)}^2 ds + \int_0^t \int_{\mathbb Z} L_g(z) \,|(\varphi(s, z) - 1)|\, |\bw_{i\pr}(s))|^2\nu(dz)ds + \d. \label{ineq4w}
\end{align}
Ignoring the second term on the left side, and using Gronwall inequality, 
we obtain 
\[
|\bw_{i\pr}(t)|^2 \le K_1(T)\d,
\]
where $K_1(T)$ is a finite constant by \eqref{mlcond2}, which holds uniformly for all $i\pr \ge N$. 
The arbitrariness of $\d$ yields
\[
\lim_{i\pr \to \infty} \sup_{0 \le t \le T} |\bw_{i\pr}(t)|^2 = 0.
\]
Using this in \eqref{ineq4w}, the proof is over.
\end{proof}
\begin{theorem}
Let hypotheses ${\bf{H.1}}$ - ${\bf{H.8}}$ hold. If $\varphi_{\e}, \varphi \in {\mathcal{U}}^{M}$ such that $\varphi_{\e} \implies \varphi$ as $\e \to 0$, then $
\mathcal{G}^{\e}(\e N^{\e^{-1}\varphi_{\e}}) \implies \mathcal{G}^0(\varphi) 
$
as $\mathbb{D}([0, T]; V^{\prime})\cap L^2([0, T]; V)$ valued random elements.
\end{theorem}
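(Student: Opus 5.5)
We compare $\ue\equiv\mathcal{G}^{\e}(\e N^{\e^{-1}\varphi_\e})$ with $\ubar_{\varphi_\e}$, the solution of the deterministic controlled equation \eqref{13} with control $\varphi_\e$, and then use Theorem~\ref{ThmLDP1}. Since $S^M$ is compact Polish, the Skorokhod representation theorem lets us assume $\varphi_\e\to\varphi$ almost surely in $S^M$. Then Theorem~\ref{ThmLDP1}, applied pathwise, gives $\ubar_{\varphi_\e}\to\ubar_\varphi=\mathcal{G}^0(\varphi)$ a.s. in $C([0,T];H)\cap L^2(0,T;V)$. This convergence is stronger than the one required in $D([0,T];V')\cap L^2(0,T;V)$. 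It therefore suffices to prove
\[
\sup_{0\le t\le T}|\ue(t)-\ubar_{\varphi_\e}(t)|^2+\int_0^T\norm{\ue(s)-\ubar_{\varphi_\e}(s)}^2ds\;\longrightarrow\;0\quad\text{in probability.}
\]

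Set $\bw^\e=\ue-\ubar_{\varphi_\e}$. Then $\bw^\e(0)=0$ and
\[
d\bw^\e+\big[\bA\bw^\e+\bB(\ue)-\bB(\ubar_{\varphi_\e})\big]dt=\int_{\mathbb Z}\big(g(t,\ue,z)-g(t,\ubar_{\varphi_\e},z)\big)(\varphi_\e-1)\nu(dz)dt+\e\int_{\mathbb Z}g(t,\ue,z)\tilde N^{\e^{-1}\varphi_\e}(dt,dz).
\]
Apply It\^o's formula to $|\bw^\e|^2$. As in \eqref{btforw}, use {\bf H.3} and {\bf H.2} to get
\[
|\langle \bB(\ue)-\bB(\ubar_{\varphi_\e}),\bw^\e\rangle|=|\langle\bB(\bw^\e,\ubar_{\varphi_\e}),\bw^\e\rangle|\le\tfrac12\norm{\bw^\e}^2+C|\bw^\e|^2\norm{\ubar_{\varphi_\e}}^2 .
\]
By {\bf H.6}, the drift term from $g$ is bounded by $2\int_{\mathbb Z}L_g(z)|\varphi_\e-1|\nu(dz)\,|\bw^\e|^2$. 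The remaining terms are the $\e^2$-jump correction $\e\int|g(t,\ue,z)|^2\varphi_\e\nu(dz)dt$ and the martingale $2\e\int(\bw^\e(s-),g)\tilde N^{\e^{-1}\varphi_\e}$. To localize, define $\tau_{\e,R}=\inf\{t:\sup_{s\le t}|\ue(s)|^2+\int_0^t\norm{\ue}^2>R\}\wedge T$. The deterministic bound \eqref{sconteqnest1} controls $\int_0^T\norm{\ubar_{\varphi_\e}}^2\le\mathcal C_1(T)$ uniformly. Parts (i) and (ii) of Lemma~\ref{MLlemma} give uniform bounds on $\int\!\!\int L_g|\varphi-1|$ and $\int\!\!\int M_g^2\varphi$ over $S^M$. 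Gronwall's inequality then yields
\[
\sup_{t\le \tau_{\e,R}}|\bw^\e(t)|^2+\int_0^{\tau_{\e,R}}\norm{\bw^\e}^2\le C(M,T)\Big(\e\,D(1+R)+\sup_{t\le\tau_{\e,R}}|\mathcal M^\e_t|\Big),
\]
where $\mathcal M^\e$ is the stochastic integral and $C(M,T)$ does not depend on $\e$.

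The martingale is handled with the Burkholder--Davis--Gundy (or Lenglart) inequality, as in \eqref{mg1}. Its quadratic variation is $\e\int(\bw^\e,g)^2\varphi_\e\nu\,ds\le\e\,C\,(1+R)^2D$ on $[0,\tau_{\e,R}]$, so $\mathbb E\sup|\mathcal M^\e|\le C\sqrt{\e}(1+R)\to0$. Finally, Lemma~\ref{scontrol} gives $\sup_\e\mathbb E[\sup_t|\ue|^2+\int\norm{\ue}^2]\le C_1$. Chebyshev's inequality then gives $P(\tau_{\e,R}<T)\le C_1/R$ uniformly in $\e$. Letting $\e\to0$ first and then $R\to\infty$ gives the required convergence in probability.

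The main obstacle is the nonlinear term. The Gronwall exponent must involve only $\norm{\ubar_{\varphi_\e}}^2$, which is controlled deterministically and uniformly by \eqref{sconteqnest1}, and not $\norm{\ue}^2$, which is only controlled in expectation. Putting the $\bB(\bw,\cdot)$ factor on the deterministic solution, as done above, is what makes the localization work. A second delicate point is that Theorem~\ref{ThmLDP1} is stated for deterministic sequences. The Skorokhod step turns it into an almost sure statement, but we must check that $\ubar_{\varphi_\e}$ is measurable in $\varphi_\e$. This follows because the map $\varphi\mapsto\ubar_\varphi$ is continuous by that same theorem.
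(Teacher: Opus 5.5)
Your argument is correct, and it takes a genuinely different route from the paper's proof.

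\textbf{The paper's route.} The paper argues by compactness and identification of limits:
\begin{enumerate}
\item It proves tightness of $\{\ue_{\varphi_\e}\}$ in $L^2(0,T;H)\cap D([0,T];V')$ via the Aldous criterion and Theorem~\ref{tightness}.
\item It introduces the linear stochastic convolution $\bv^\e$, solving $d\bv^\e+\bA\bv^\e\,dt=\e\int_{\mathbb Z}g\,\tilde N^{\e^{-1}\varphi_\e}$, to absorb the martingale part, and shows $\bv^\e\to0$.
\item It passes to a Skorokhod representation of $(\varphi_\e,\e N^{\e^{-1}\varphi_\e},\ue_{\varphi_\e},\bv^\e)$.
\item It identifies each limit point by showing that $\hat{\ue}_{\varphi_\e}-\hat\bv^\e$, which solves a random PDE with no stochastic integral, converges to the controlled solution with control $\hat\varphi$. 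This step essentially re-runs the proof of Theorem~\ref{ThmLDP1}, now with random controls and with the perturbation $\hat\bv^\e$ inside $\bB$ and $g$.
\end{enumerate}

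\textbf{Your route.} You bypass tightness and the auxiliary process entirely. You make a direct It\^o--Gronwall estimate on $\ue-\ubar_{\varphi_\e}$, localized with Lemma~\ref{scontrol}, and control the martingale by BDG at rate $\sqrt{\e}$. You then use Theorem~\ref{ThmLDP1} as a black box.

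The step that makes this work is the {\bf H.3} cancellation, which leaves only $\langle\bB(\bw^\e,\ubar_{\varphi_\e}),\bw^\e\rangle$. Hence the Gronwall exponent involves only $\norm{\ubar_{\varphi_\e}}^2$, which \eqref{sconteqnest1} bounds uniformly over $S^M$. This is the same mechanism the paper uses in Step~3 of Theorem~\ref{ThmLDP1}.

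\textbf{What each route buys.} Yours is shorter and quantitative. It also gives convergence in a stronger topology, namely $\sup_t|\cdot|$ in $H$ together with $L^2(0,T;V)$. The paper's softer route needs only qualitative convergence. It never has to treat $\ubar_{\varphi_\e}$ as an adapted process or apply It\^o's formula to a difference involving it.

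\textbf{Two bookkeeping points.}
\begin{enumerate}
\item The Skorokhod step is unnecessary, and as written it is slightly off: on the new probability space you would also need the driving noise in order to define $\ue$. Instead, since $\varphi\mapsto\ubar_\varphi$ is continuous on $S^M$, the continuous mapping theorem gives $\ubar_{\varphi_\e}\implies\ubar_\varphi$ on the original space. Your estimate gives $\sup_t|\bw^\e(t)|+\norm{\bw^\e}_{L^2(0,T;V)}\to0$ in probability, and this quantity dominates the $J_1$ distance in $D([0,T];V')$ plus the $L^2(0,T;V)$ distance. The converging-together lemma then finishes the proof.
\item It\^o's formula for $|\bw^\e|^2$ needs $\ubar_{\varphi_\e}$ to be adapted, not merely measurable. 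This holds because, by uniqueness, $\ubar_\varphi(t)$ depends only on the restriction of $\varphi$ to $\mathbb Z\times[0,t]$, and $\varphi_\e$ is predictable.
\end{enumerate}
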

\begin{proof}
Let $\ue_{\varphi_{\e}}$  be the solution of equation \eqref{11} with $\varphi_{\e}$ in the place of $\varphi$. We will denote 
 $\ue_{\varphi_{\e}}$ by $\mathcal{G}^{\e}(\e N^{\e^{-1}\varphi_{\e}})$. We know that the laws of the family
 $\{\ue_{\varphi_{\e}}\}$ indexed by $\e > 0$ for $\e$ small enough, form a tight family as random elements taking values in $\mathbb{U} := L^2([0, T]; H) \cap D([0, T]; V^{\prime})$ by using Theorem 3.1. For this, precompactness in $D([0, T]; V^{\prime})$ needs to be shown. We prove it below by  the Aldous criterion.

Let $\e_0$ be small enough so that there exists a unique strong solution $\ue_{\varphi_{\e}}$ for each $\e \in (0, \e_0]$. First, we verify that  $\{\ue_{\varphi_{\e}}(t): 0 < \e \le \e_0\}$ is a tight 
family of $V^{\prime}$-valued random variables for each $t \in [0, T]$. From the a priori bound it follows in particular that
$\sup_{0 < \e \le \e_0} \mathbb{E}|\ue_{\varphi_{\e}}(t)|^2 < \infty$. For each  $v \in V^{\prime}$, the real-valued family $\{(\ue_{\varphi_{\e}}(t), v)_{V^{\prime}}\}$ is therefore tight. 

Fix a $k \in \mathbb{N}$. Next, consider the first $k$ number of eigenfunctions $\{e_1, e_2,\cdots,e_k\}$ of the Stokes operator $\bA$ which form a finite orthonormal sequence in $H$ with eigenvalues $\l_1, \cdots, \l_k$ respectively. Hence, $F_k := \{\sqrt{\l_j}e_j: j = 1, 2, \cdots, k\}$ is an orthonormal 
set in $V^{\prime}$. The subspace spanned by $F_k$ is denoted $S_k$. 
Let $d(\ue_{\varphi_{\e}}(t), S_k)$ denote the distance in $V\pr$ of $\ue_{\varphi_{\e}}(t)$ from $S_k$. Then, for any $\delta > 0$, 
\begin{align*}
 \limsup_{\e \to 0} P\{d(\ue_{\varphi_{\e}}(t), S_k) > \delta\} & \le 
\limsup_{\e \to 0} \d^{-2} \mathbb{E}\left(d(\ue_{\varphi_{\e}}(t), S_k)^2\right)\\
& = \d^{-2} \limsup_{\e \to 0} \mathbb{E} \sum_{j = k+1}^{\infty} (\ue_{\varphi_{\e}}(t), \sqrt{\l_j}e_j)^2_{V^{\prime}}\\
& = \d^{-2} \limsup_{\e \to 0} \mathbb{E} \sum_{j = k+1}^{\infty} \frac{1}{\l_j}(\ue_{\varphi_{\e}}(t), e_j)^2\\
&\le \frac{\d^{-2}}{\l_{k + 1}} \,\sup_{0 < \e \le \e_0} \mathbb{E} |\ue_{\varphi_{\e}}(t)|^2
\end{align*}
which tends to zero as $k \to \infty$. This holds for any fixed $t \in [0, T]$. 
Hence we infer that $\{\ue_{\varphi_{\e}}(t): 0 < \e \le \e_0\}$ is a tight 
family of $V^{\prime}$-valued random variables for each $t \in [0, T]$ (cf.  Metivier (see \cite{Met} pages 28-29). 

Finally, we need to verify  Aldous condition: For each $\rho > 0, \eta > 0$, there exists a $\d > 0$ and $\e_0 > 0$ with the property that whatever be the family of stopping times $\{\tau_{\e}\}$ with $\tau_{\e} + \delta \le T$ for all $\e$, 
the following holds:
$
 \sup_{\e \le \e_0} \,\sup_{0 \le \theta \le \d} P\{d(\ue_{\varphi_{\e}}(\tau_{\e}), \ue_{\varphi_{\e}}(\tau_{\e} + \theta)) \ge \eta\} \le \rho.
$ We first write 
\begin{align}
&\ue_{\varphi_{\e}}(\tau_{\e} + \theta) -  \ue_{\varphi_{\e}}(\tau_{\e}) + \int_{\tau_{\e}}^{\tau_{\e} + \theta} [\bA\ue_{\varphi_{\e}}(s) + \bB(\ue_{\varphi_{\e}}(s))] ds \notag\\
&\!\!\!=\!\!\int_{\tau_{\e}}^{\tau_{\e} + \theta}\!\!\!\!\f(s)ds + \int_{\tau_{\e}}^{\tau_{\e} + \theta}\!\!\!\!\int_{\mathbb Z} g(s, \ue_{\varphi_{\e}}(s), z)(\varphi_{\e}(s, z) - 1) \nu(dz)ds  + {\e}\int_{\tau_{\e}}^{\tau_{\e} + \theta}\!\!\!\!\int_{\mathbb Z} g(s, \ue_{\varphi_{\e}}(s), z) \tilde{N}^{\e^{-1}\varphi_{\e}}(ds, dz)\notag\\
&=: J_1 + J_2 + J_3.
\end{align}
For any $v \in V$, consider  
\begin{align}
  \mathbb{E}| \langle \int_{\tau_{\e}}^{\tau_{\e} + \theta} [\bA\ue_{\varphi_{\e}}(s) + \bB(\ue_{\varphi_{\e}}(s))] ds, v\rangle| & \le |v|\, \mathbb{E} \int_{\tau_{\e}}^{\tau_{\e} + \theta} \{\norm{\ue_{\varphi_{\e}}(s)} + |\ue_{\varphi_{\e}}(s)|\;\norm{\ue_{\varphi_{\e}}(s)}\}ds\notag\\
& \le |v| \,[\mathbb{E} \sup_{0 \le s \le T} (1 + |\ue_{\varphi_{\e}}(s)|)^2]^{1/2}\, [\mathbb{E} (\int_{\tau_{\e}}^{\tau_{\e} + \theta} \norm{\ue_{\varphi_{\e}}(s)}ds)^2]^{1/2}\notag\\
& \le C_1|v| \sqrt{\theta} [\mathbb{E}\int_{\tau_{\e}}^{\tau_{\e} + \theta}\norm{\ue_{\varphi_{\e}}(s)}^2ds]^{1/2}.
\label{left}
\end{align}
Thus,
$\mathbb{E}|\int_{\tau_{\e}}^{\tau_{\e} + \theta} [\bA\ue_{\varphi_{\e}}(s) + \bB(\ue_{\varphi_{\e}}(s))] ds|_{V^{\prime}} \le K_1 \sqrt{\theta}$ for a suitable constant $K_1$ which is independent of $\e$ by the a priori estimate on $\bu_{\varphi_{\e}}^{\e}$. It is clear that 
$|J_1|_{V^{\pr}} \le K_2 \sqrt{\theta}$ for a suitable constant $K_2$.
Next, consider
\begin{align}
 \mathbb{E} |J_2|_{V^{\pr}}& \le \mathbb{E} \left(\sup_{0 \le s \le T} (1 + |\ue_{\varphi_{\e}}(s)|)\int_{\tau_{\e}}^{\tau_{\e} + \theta}\int_{\mathbb Z} M_g(z) |\varphi_{\e}(s, z) - 1|\nu(dz)ds\right)\notag\\
& \le \mathbb{E} \left(\sup_{0 \le s \le T} (1 + |\ue_{\varphi_{\e}}(s)|)\sup_{\phi \in S^M} \int_{\tau_{\e}}^{\tau_{\e} + \theta}\int_{\mathbb Z} M_g(z)\,|\phi(s, z) - 1|\nu(dz)ds\right) 	\label{right2}
\end{align}
which by (iii) of Lemma C.3 converges to zero as $\theta \to 0$. 
Finally, for any $v \in V$, 
\begin{align}
 \mathbb{E} |\langle J_3, v\rangle| & \le |v| \sqrt{\e} \mathbb{E} (\int_{\tau_{\e}}^{\tau_{\e} + \theta} \int_{\mathbb Z} |g(s, \ue_{\varphi_{\e}}(s), z)|^2\varphi_{\e}(s, z)\nu(dz) ds)^{1/2}\notag\\
&\le |v| \sqrt{\e} \mathbb{E} \sup_{0 \le s \le T} (1 + |\ue_{\varphi_{\e}}(s)|) \sup_{\phi \in S^M}(\int_{0}^{T} \int_{\mathbb Z} 
M_g^2(z) \phi(s, z)\nu(dz) ds)^{1/2} \notag
\end{align}
so that 
\begin{equation}
\mathbb{E}\norm{J_3}_{_{V\pr}} \le K_3\sqrt{\e} \label{right3}
\end{equation}
which goes to zero as $\e \to 0$. From \eqref{left}, \eqref{right2} and \eqref{right3}, the Aldous condition holds, and we have tightness of the laws of $\{\ue_{\varphi_{\e}}\}$  in $\mathbb{D}([0, T]; V^{\prime})$.

Let $\bu$ be any limit point of 
 $\{\ue_{\varphi_{\e}}\}$. We need to identify the law of $\bu$ with  $\mathcal{G}^0(\varphi)$. 

Towards this, define the process $\bv^{\e}$ as the solution of 
\begin{equation}
 \bv^{\e}(t) + \int_0^t \bA\bv^{\e}(s)ds = \e \int_0^t \int_{\mathbb Z} g(s, \bu_{\varphi_{\e}}^{\e}(s), z)\tilde{N}^{\e^{-1}\varphi_{\e}}(ds, dz)
\label{vepeqn}
\end{equation}
on $[0, T]$. There exists a unique solution of \eqref{vepeqn} with values in $\mathbb{U}$. It is easy to show that 
\begin{equation}
 \lim_{\e \to 0} E\{\sup_{0 \le t \le T} |\bv^{\e}(t)|^2 + \int_0^T \norm{\bv^{\e}(t)}^2dt\} = 0. \label{vepest}
\end{equation}

The family $({\varphi}_{\e}, \e {N}^{\e^{-1}\varphi_{\e}}, 
 {\ue}_{\varphi_{\e}}, {\bv}^{\e}) $ indexed by $\e > 0$ is tight in $\mathcal{U}^{\e}\times \bar{\mathbb{M}}
 \times \mathbb{U}\times \mathbb{U}$.
 By Skorohod representation theorem, we will, by a change of the original probability space to a new 
 space, $(\O\pr, {\mathcal{F}\pr}, {\mathcal{F}_t\pr}, P\pr)$, construct  random elements 
 $(\hat{\varphi}_{\e}, \e \hat{N}^{\e^{-1}\varphi_{\e}}, \hat{\bar{\bu}}^\e_{\varphi_{\e}}, \hat{\bv}^{\e})$ such that 
 
 (i) law of $(\hat{\varphi}_{\e}, \e \hat{N}^{\e^{-1}\varphi_{\e}}, \hat{\bar{\bu}}^\e_{\varphi_{\e}}, \hat{\bv}^{\e}) = $  law of $({\varphi}_{\e}, \e {N}^{\e^{-1}\varphi_{\e}}, 
 {\ue}_{\varphi_{\e}}, {\bv}^{\e}) $ for each $\e > 0$, 
 
 (ii) law of $(\hat{\varphi}, \hat{\bu}) =$  law of $({\varphi}, {\bu})$ and 
 
 (iii) $(\hat{\varphi}_{\e}, \e \hat{N}^{\e^{-1}\varphi_{\e}}, \hat{\bar{\bu}}^\e_{\varphi_{\e}}, \hat{\bv}^{\e})  \to (\hat{\varphi}, 0, \hat{\bu}, 0) $ a.s. as $\e \to 0$ 
along a subsequence. 
 
 \par
Consider $\bw^{\e}(t):= \hat{\ue}_{\varphi_{\e}}(t) - \hat{\bv}^{\e}(t)$. 
Then 
\[
 \bw^{\e}(t) + \int_0^t \bA\bw^{\e}(s)ds + \int_0^t \bB(\hat{\bar{\bu}}^\e_{\varphi_{\e}}(s))ds = \bu(0) + \int_0^t \int_{\mathbb Z} (\hat{\varphi}_{\e}(s, z) - 1) g(s, \hat{\bar{\bu}}^\e_{\varphi_{\e}}(s), z) \nu(dz)ds.
\]
We rewrite the above equation as 
\[
 \bw^{\e}(t) + \int_0^t \bA\bw^{\e}(s)ds + \int_0^t \bB(\bw^{\e}(s) + \hat{\bv}^{\e}(s))ds = \bu(0) + \int_0^t \int_{\mathbb Z} (\hat{\varphi}_{\e}(s, z) - 1) g(s, \bw^{\e}(s) + \hat{\bv}^{\e}(s), z) \nu(dz)ds.
\]
Define the process $\hat{\bu}$ on $[0, T]$ by
\[
\hat{\bu}(t) + \int_0^t \bA\hat{\bu}(s)ds + \int_0^t \bB(\hat{\bu}(s))ds =  \hat{\bu}(0) + \int_0^t \int_{\mathbb Z} (\hat{\varphi}(s, z) - 1) g(s, \hat{\bu}(s), z) \nu(dz)ds.
\]
A proof along lines similar to that of Theorem \ref{ThmLDP1} yields 
\[
 \lim_{\e \to 0} \sup_{0 \le t \le T} |\bw^{\e}(t) - \hat{\bu}(t)|^2 = 0.
\]
Hence $\displaystyle{\lim_{\e \to 0} \sup_{0 \le t \le T} |\hat{\bar{\bu}}^\e_{\varphi_{\e}}(t) - \hat{\bu}(t)| }= 0$  by writing
\[
 |\hat{\bar{\bu}}^\e_{\varphi_{\e}}(t) - \hat{\bu}(t)| \le |\bw^{\e}(t) - \hat{\bu}(t)| 
+ |\hat{\bv}^{\e}(t)|
\]
taking supremum over $[0, T]$ and using \eqref{vepest} which gives uniform convergence of $\hat{\bv}^{\e}$ to $0$. Using it along with 
the a priori estimates, the proof is completed.
 \end{proof}

\section{Proof of MDP}

Our aim in this section is to prove a moderate deviations result for $Y^{\e}$ which corresponds to 
a LDP result with speed $b(\e)$. 
 
\par
Recalling the definition of $\mathcal{U}_{+, \e}^M$ given in Appendix D, we note that for every 
$\varphi \in \mathcal{U}_{+, \e}^M$, there exists a unique solution of the equation 
\begin{align}
&d\ue_{\varphi}(t) + [\bA\ue_{\varphi}(t) + \bB(\ue_{\varphi}(t), \ue_{\varphi}(t))] dt \notag\\
&= [\f(t) + \int_{\mathbb Z} g(t, \ue_{\varphi}(t), z)(\varphi(t, z) - 1) \nu(dz)]dt  + {\e}\int_{\mathbb Z} g(t, \ue_{\varphi}(t), z) \tilde{N}^{\e^{-1}\varphi}(dt, dz).\label{ue}
\end{align}
For, if we set $\tilde{\varphi} = \frac{1}{\varphi}$, by a theorem of Jacod and Shiryayev (Theorem III. 3. 24)
and Lemma 2.3 in \cite{BDM09}, it follows that 
\[
\mathcal{E}_t^{\e}(\tilde{\varphi}) = \exp\left\{\int_{(0, t]\times Z \times [0, \e^{-1}\varphi]}\log(\tilde{\varphi} )d{N} + \int_{(0, t]\times Z \times [0, \e^{-1}\varphi]}(1 - \tilde{\varphi})d\nu_Tdr\right\}
\]
is a martingale. Hence, 
\[
Q_T^{\e}(G) = \int_G \mathcal{E}_T^{\e}(\tilde{\varphi}) d\bar{P}
\]
for $G \in \mathcal{B}(\bar{M})$ defines a probability measure on $\bar{M}$. Further, $\bar{P}$ and 
$Q_T^{\e}$ are mutually absolutely continuous. The random measure $\e N^{\e^{-1}\varphi}$ under $Q_T^{\e}$ has the same law as that of $\e N^{\e^{-1}}$ under $\bar{P}$. Therefore, 
if we denote $\ue$ by $\bar{\mathcal{G}}^{\e}(\e N^{\e^{-1}})$,then 
$\ue_{\varphi} = \bar{\mathcal{G}}^{\e}(\e N^{\e^{-1}\varphi})$ is $Q_T^{\e}$ a.s. the unique solution of 
\eqref{ue}. 

\par
We will write $Y^{\e} := \mathcal{H}^{\e}(\e N^{\e^{-1}})$.
Define $Y^{\e, \varphi} = \mathcal{H}^{\e}(\e N^{\e^{-1}\varphi})$ and hence, 
 \begin{equation}
 Y^{\e, \varphi} =  \frac{\ue_{\varphi}(t) - \bu(t)}{a(\e)} \label{yphiepsilon}
 \end{equation}
  under $Q_T^{\e}$ has the same law as $Y^{\e}$ under $P$. 
 The process $Y^{\e, \varphi}$ solves the equation 
\begin{align}
& dY^{\e, \varphi}(t) + \bA Y^{\e, \varphi}(t)dt + \frac{1}{a(\e)}\left[\bB(\bu^{\e, \varphi}(t)) - \bB(\bu(t))\right]dt \notag\\
&= \frac{\e}{a(\e)} \int_{\mathbb Z} g(t, \bu_{\varphi}^{\e}(t), z) \tilde{N}^{\e^{-1}\varphi}(dz, dt) + \frac{1}{a(\e)} \int_{\mathbb Z} g(t, \bu_{\varphi}^{\e}(t), z) (\varphi(t, z) - 1) \nu(dz)dt\label{yphieps}
\end{align}
Using  
\[
 \frac{1}{a(\e)}\left[\bB(\bu_{\varphi}^{\e}(t)) - \bB(\bu(t))\right] = a(\e)\bB(Y^{\e, \varphi}(t)) + \bB(Y^{\e, \varphi}(t), \bu(t)) + \bB(\bu(t), Y^{\e, \varphi}(t)), 
\]
and the notation $\psi = \frac{\varphi - 1}{a(\e)}$, 
we write  \eqref{yphieps} as   
\begin{align}
& dY^{\e, \varphi}(t) + \bA Y^{\e, \varphi}(t)dt + \left[a(\e)\bB(Y^{\e, \varphi}(t)) + \bB(Y^{\e, \varphi}(t), \bu(t)) + \bB(\bu(t), Y^{\e, \varphi}(t))\right]dt \notag\\
&= \frac{\e}{a(\e)} \int_{\mathbb Z} g(t, \bu_{\varphi}^{\e}(t), z) \tilde{N}^{\e^{-1}\varphi}(dz, dt) \notag\\
& \,\, +  \int_{\mathbb Z} \{g(t, \bu_{\varphi}^{\e}(t), z) - g(\bu(t), z)\} \psi(z, t) \nu(dz)dt
 + \int_{\mathbb Z} g(t, \bu(t), z) \psi(z, t) \nu(dz)dt. \label{yepsphieqn}
\end{align}
Next, as $\e$ varies, we choose a corresponding control $\varphi_{\e}$, and using the notation 
$\psi_{\e} := \frac{\varphi_{\e} - 1}{a(\e)}$, we write 
\begin{align}
& dY^{\e, \varphi_{\e}}(t) + \bA Y^{\e, \varphi_{\e}}(t)dt + \left[a(\e)\bB(Y^{\e, \varphi_{\e}}(t)) + \bB(Y^{\e, \varphi_{\e}}(t), \bu) + \bB(\bu, Y^{\e, \varphi_{\e}}(t))\right]dt \notag\\
&= \frac{\e}{a(\e)} \int_{\mathbb Z} g(t, \bu_{\varphi_{\e}}^{\e}(t), z) \tilde{N}^{\e^{-1}\varphi_{\e}}(dz, dt) \notag\\
&\,\, + \int_{\mathbb Z} \{g(t, \bu_{\varphi_{\e}}^{\e}(t), z) - g(t, \bu(t), z)\} \psi_{\e}(t, z)\nu(dz)dt + \int_{\mathbb Z} g(t, \bu(t), z)\psi_{\e}(t, z) \nu(dz)dt \label{yephiepeqn}
\end{align}
 Later on, in order to identify the limit of $Y^{\e, \varphi_{\e}}$, we introduce the equation   
\begin{equation}
dY^{0}(t) + \bA Y^{0}(t)dt + \left[ \bB(Y^{0}(t), \bu) + \bB(\bu, Y^{0}(t))\right]dt =   \int_{\mathbb Z} g(t, \bu(t), z)\psi(t, z) \nu(dz)dt \label{yzeroeqn}
\end{equation}
where $\psi_{\epsilon}$ converges to $\psi$ in the sense of Condition \ref{cond2} (b), i.e., $\psi_{\epsilon}1_{\{|\psi_{\epsilon}| \le \beta/a(\e)\}} \to \psi$ in $B_2(M\pr)$ for some constant $M\pr > 0$. 
We will denote $Y^{0}$ by $\mathcal{H}^0(\psi, \nu_T)$.
The notation $\mathcal{H}^0(\varphi, \nu_T)$ will also 
be used to denote the solution of \eqref{yzeroeqn} with 
$\varphi$ in the place of $\psi$ where $\varphi \in B_2(M)$. We recall that the $M$-ball $B_2(M)$ in the space $L^2(\nu_T)$  is equipped with the weak topology of $L^2(\nu_T)$. 
\begin{proposition}\label{MDPTh1}
Given any $M > 0$, let $\varphi_{\e}, \varphi \in B_2(M)$, and $\varphi_{\e} \to \varphi$ as $\e \to 0$. Then, 
$\mathcal{H}^0(\varphi_{\e}, \nu_T) \to \mathcal{H}^0(\varphi, \nu_T)$ in $L^2([0, T]; V)\cap D([0, T]; V\pr)$. 
\end{proposition}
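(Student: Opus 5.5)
The plan is to treat \eqref{yzeroeqn} as a linear equation (linearized around $\bu=\bu^0$) driven by a forcing that is linear and weakly continuous in the control. Write $Y_\e := \mathcal{H}^0(\varphi_\e,\nu_T)$ and $Y:=\mathcal{H}^0(\varphi,\nu_T)$. First we obtain uniform a priori bounds. Taking the energy equality for $Y_\e$ and using {\bf H.3}, we have $\langle \bB(\bu,Y_\e),Y_\e\rangle=0$. By {\bf H.2}, $|\langle \bB(Y_\e,\bu),Y_\e\rangle|\le \frac12\norm{Y_\e}^2 + C|Y_\e|^2\norm{\bu}^2$. The forcing $G_\e(t):=\int_{\mathbb Z} g(t,\bu(t),z)\varphi_\e(t,z)\nu(dz)$ satisfies, by {\bf H.5} and Cauchy--Schwarz,
\begin{equation*}
|G_\e(t)|\le M_2(1+\sup_s|\bu(s)|)\,\|\varphi_\e(t,\cdot)\|_{L^2(\nu)} .
\end{equation*}
Hence $\int_0^T|G_\e(t)|dt\le C\sqrt{T}M$ uniformly in $\e$. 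Since $\int_0^T\norm{\bu}^2<\infty$, Gronwall gives
\begin{equation*}
\sup_\e\Big\{\sup_{t\le T}|Y_\e(t)|^2+\int_0^T\norm{Y_\e}^2dt\Big\}<\infty .
\end{equation*}
Existence and uniqueness for each control follow in the same way, because the equation is linear with $L^1(0,T;H)$ forcing.

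Next we pass to the limit by compactness. Weak-star compactness in $L^\infty(0,T;H)$ and weak compactness in $L^2(0,T;V)$ give a subsequence $Y_\e\to \bar Y$. The bound $\norm{\bA Y_\e+\bB(Y_\e,\bu)+\bB(\bu,Y_\e)}_{V'}\le C(1+|\bu|^{1/2}\norm{\bu}^{1/2})\,|Y_\e|^{1/2}\norm{Y_\e}^{1/2}+\norm{Y_\e}$, together with the $L^1$ bound on $G_\e$ and the equicontinuity estimate $\int_t^{t+h}|G_\e|\le C\sqrt h M$, gives precompactness in $C([0,T];V')$ by Arzel\`a--Ascoli. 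Theorem \ref{tightness} then upgrades this to strong convergence in $L^2(0,T;H)$. To identify $\bar Y=Y$, we test against $h(s)e_k$ as in Step 1 of Theorem \ref{ThmLDP1}. The linear terms converge weakly. For the forcing term, $\int_0^t\int_{\mathbb Z}(g(s,\bu(s),z),h(s)e_k)\varphi_\e(s,z)\nu(dz)ds$ is a continuous linear functional of $\varphi_\e\in L^2(\nu_T)$, because $(s,z)\mapsto (g(s,\bu(s),z),h e_k)1_{[0,t]}(s)$ lies in $L^2(\nu_T)$ by {\bf H.5}. It therefore converges by weak convergence of $\varphi_\e\to\varphi$ in $B_2(M)$. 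This step is easier than in the LDP case, since no entropy or uniform-integrability argument is needed. Uniqueness of the limit equation then gives convergence of the whole family.

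The main obstacle is upgrading weak convergence to strong convergence in $L^2(0,T;V)$ and in $\sup_t|\cdot|$, because the forcing converges only weakly. For $W_\e:=Y_\e-Y$, the energy equality gives
\begin{equation*}
|W_\e(t)|^2+\int_0^t\norm{W_\e}^2\le C\int_0^t|W_\e|^2\norm{\bu}^2ds+2\int_0^t\int_{\mathbb Z}(g(s,\bu,z),W_\e(s))(\varphi_\e-\varphi)\nu(dz)ds .
\end{equation*}
We split the last term by writing $W_\e = (W_\e-\Pi_m W_\e)+\Pi_m W_\e$, where $\Pi_m$ is a finite-dimensional projection onto span$\{e_1,\dots,e_m\}$. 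The tail part is bounded by $2M\,C\,\|(I-\Pi_m)W_\e\|_{L^2(0,T;H)}$. This is small uniformly in $\e$ for large $m$, because $W_\e\to0$ strongly in $L^2(0,T;H)$; more carefully, one uses the strong $L^2(0,T;H)$ convergence directly with Cauchy--Schwarz in $(s,z)$. To do so we bound $|\int (g,W_\e)(\varphi_\e-\varphi)|\le 2M\big(\int_0^T\int|g|^2|W_\e|^2\nu\,ds\big)^{1/2}\le 2MM_2C\|W_\e\|_{L^2(0,T;H)}\to0$. This already gives a vanishing error, so the projection is unnecessary. Gronwall, using $\int\norm{\bu}^2<\infty$, then yields $\sup_t|W_\e(t)|^2+\int_0^T\norm{W_\e}^2\to0$. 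This proves convergence in $L^2([0,T];V)$ and, a fortiori, in $D([0,T];V')$.
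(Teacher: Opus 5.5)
Your proposal is correct and follows essentially the paper's route: uniform energy bounds, compactness in $L^2(0,T;H)$ via Theorem \ref{tightness}, then an energy/Gronwall estimate for the difference of the two solutions, and finally a subsequence argument. The only difference is the order of steps. You identify the limit through the weak formulation first, so the forcing error is then killed by Cauchy--Schwarz alone. The paper skips that identification: it pairs the strongly convergent $(g(\bu,\cdot),Z_{\e_n})$ against the weakly null $\varphi_{\e_n}-\varphi$ and lets Gronwall show that the limit point is zero.
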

\begin{proof}
Consider 
\begin{align}
 \sum_{j=1}^{\infty} (\int_0^T \int_{\mathbb Z} (g(t, \bu(t), z), e_j)\varphi_{\e}(t, z) \nu(dz)dt)^2
& \le \sum_{j=1}^{\infty} \norm{(g(\cdot, \bu(\cdot), \cdot), e_j)}^2_{L^2(\nu_T)} \norm{\varphi_{\e}}^2_{L^2(\nu_T)}
\notag\\
& \le M^2 \norm{\,|g(\cdot, \bu(\cdot), \cdot)|_{_H}\,}^2_{L^2(\nu_T)}\notag\\
& \le M^2 \,T \,M_2^2 \sup_{0 \le t \le T} (1 + |\bu(t)|)^2 < \infty 
\end{align}
 Hence,  $\int_0^T \int_{\mathbb Z} g(\bu(t), z)\varphi_{\e}(t, z) \nu(dz)dt$ lies in $H$. Likewise, $\int_0^T \int_{\mathbb Z} g(\bu(t), z)\varphi (t, z) \nu(dz)dt$ lies in $H$ as well.

 Let us call $\mathcal{H}^0(\varphi^{\e}, \nu_T)$ as $Y_{\e}^0$, and  
 $\mathcal{H}^0(\varphi, \nu_T)$ as $Y^0$. Define $Z_{\e} : = Y_{\e}^0 - Y^0$.
 Then, 
 \begin{align}
 &|Z_{\e}(t)|^2 + 2 \int_0^t \norm{Z_{\e}(s)}^2 ds + 2 \int_0^t \langle \bB(Z_{\e}(s), \bu(s)) + \bB(\bu(s), Z_{\e}(s)), Z_{\e}(s)\rangle ds\notag \\
 &= 2\int_0^t \int_{\mathbb Z} (g(\bu(s), z), Z_{\e}(s)) \{\varphi_{\e}(s, z) - \varphi(s, z)\}\nu(dz)ds \,\,\,\text{so}\,\,\text{that}\notag\\
 &|Z_{\e}(t)|^2 + 2 \int_0^t \norm{Z_{\e}(s)}^2 ds \notag\\
 &\!\!\!\!\le 2 C\int_0^t |Z_{\e}(s)|\,\norm{Z_{\e}(s)}\,\norm{\bu(s)}ds  + 2 |\int_0^t \int_{\mathbb Z} (g(\bu(s), z), Z_{\e}(s)) 
 \{\varphi_{\e}(s, z) - \varphi(s, z)\}\nu(dz)ds|\notag\\
 &\!\!\!\! \le  \int_0^t\!\!\norm{Z_{\e}(s)}^2 ds + C^2\!\!\!\int_0^t |Z_{\e}(s)|^2\norm{\bu(s)}^2ds 
 + 2\,|\!\!\int_0^t \int_{\mathbb Z} (g(\bu(s), z), Z_{\e}(s)) \{\varphi_{\e}(s, z) - \varphi(s, z)\}\nu(dz)ds|.
\label{zepseqn}
 \end{align}
Consider the last term on the right side of \eqref{zepseqn}:
\begin{align*}
 &2\,|\!\!\int_0^t \int_{\mathbb Z} (g(\bu(s), z), Z_{\e}(s)) \{\varphi_{\e}(s, z) - \varphi(s, z)\}\nu(dz)ds|\\
&\le 2\,|\!\!\sup_{0 \le t \le T} (1 + |\bu(s)|)\int_0^t |Z_{\e}(s)|\int_{\mathbb Z} 
M_g(z)[|\varphi_{\e}(s, z)| + |\varphi(s, z)|]\nu(dz)ds\\
& \le  2\,|\!\!\sup_{0 \le t \le T} (1 + |\bu(s)|)M_2^{1/2}\{\int_{\mathbb Z} [|\varphi_{\e}(s, z)| + |\varphi(s, z)|]^2\nu(dz)\}^{1/2}ds\\
& \le 2\,|\!\!\sup_{0 \le t \le T} (1 + |\bu(s)|)M_2^{1/2}2T\{\int_0^t |Z_{\e}(s)|^2 ds \int_0^t \int_{\mathbb Z} [|\varphi_{\e}(s, z)|^2 + |\varphi(s, z)|^2]
\nu(dz)ds\}^{1/2}\\
&\le 2\sup_{0 \le t \le T} (1 + |\bu(s)|)M_2^{1/2}T\{ \int_0^t |Z_{\e}(s)|^2 ds + 2M^2\}
\end{align*}
Using this bound in \eqref{zepseqn}, one obtains
 \begin{align*}
&|Z_{\e}(t)|^2 \\
 &\le 4M_2^{1/2}TM^2\sup_{0 \le t \le T} (1 + |\bu(s)|) + C^2\int_0^t |Z_{\e}(s)|^2\norm{\bu(s)}^2 ds + 2\,M_2^{1/2}\sup_{0 \le s \le T}(1 + |\bu(s)|)\int_0^t |Z_{\e}(s)|^2 ds.
\end{align*}
 By the Gronwall inequality, one obtains a uniform in $\e$ bound for $\{|Z_{\e}(t)|\}$ for each fixed $t$. 
 Arguing along lines similar to that of Theorem 4.4,  one obtains relatively compactness $D([0, T]; V\pr)$ and boundedness in $L^2([0, T]; V)$ so that by the result of Lions given in Appendix A, $\{Z_{\e}\}$ converges along a sequence to a limit in the strong topology of $L^2(0, T; H)$. Let $Z$ be any limit point of $\{Z_{\e}\}$. 
 There exists a sequence $\e_n \to 0$ such that  $Z_{\e_n} \to Z$ in $L_w^2([0, T]; V)\cap D([0, T]; V\pr)$ as $n \to \infty$.
 Therefore, 
 \[
 ( Z_{\e_n}(\cdot), g(\bu(\cdot), z))  \to (Z(\cdot), g(\bu(\cdot), z)) \,\,\text{in}\,\, D([0, T]; \mathbb{R}),\text{and} 
 \]
\[
 \int_0^T( Z_{\e_n}(s), g(\bu(s), z))ds \to \int_0^T (Z(s), g(\bu(s), z))ds.
\]
 With $C$ denoting an upper bound uniformly in $\e$,  for $\sup_{0 \le s \le T} |Z_{\e}(s)|^2$, we have
 \[
 \int_0^T \int_{\mathbb Z} (g(\bu(s), z), Z_{\e}(s))^2 \nu(dz)ds \le C \sup_{0 \le s \le T} (1 + |\bu(s)|^2) M_2^2 T.
 \]
 Thus $(g(\bu(\cdot), z), Z_{\e}(\cdot)) \in L^2(\nu_T)$. Likewise, 
 $(g(\bu(\cdot), z), Z(\cdot)) \in L^2(\nu_T)$. In addition,
 \begin{equation}
 \int_0^T \int_{\mathbb Z} [(g(\bu(s), z), Z_{\e_n}(s)) - (g(\bu(s), z), Z(s))]^2 \nu(dz)ds \to 0 \label{sconv}
 \end{equation}
since 
\begin{align*}
 \int_0^T \int_{\mathbb Z} [(g(\bu(s), z), Z_{\e_n}(s)- Z(s))]^2 \nu(dz)ds
& \le \int_0^T \int_{\mathbb Z} M_z^2(1 + |\bu(s)|)^2\,|Z_{\e_n}(s)- Z(s)|^2 \nu(dz)ds\\
& \le \sup_{0 \le s \le T} (1 + |\bu(s)|)^2\,M_2^2 \int_0^T |Z_{\e_n}(s)- Z(s)|^2ds
\end{align*} 
goes to zero.
 Therefore as $\e_n \to 0$, 
 \[
 \int_0^t \int_{\mathbb Z} (g(\bu(s), z), Z_{\e_n}(s)) \{\varphi_{\e_n}(s, z) - \varphi(s, z)\}\nu(dz)ds \to 0
 \]
 by weak convergence of $\varphi_{\e}$ to $\varphi$ in $L^2(\nu_T)$. 
 Using this in \eqref{zepseqn}, we obtain that for any $\d > 0$, there exists an $m$ such that 
 if $n \ge m$, then
 \begin{equation}
 |Z_{\e_n}(t)|^2 +  \int_0^t \norm{Z_{\e_n}(s)}^2 ds \le C^2\int_0^t |Z_{\e_n}(s)|^2\norm{\bu(s)}^2ds + \d.\label{zepsineq}
 \end{equation}
Ignoring the second term on the left side, and using Gronwall inequality, we obtain
\[
|Z_{\e_n}(t)|^2 \le \d \exp\{C^2\int_0^t \norm{\bu(s)}^2\}.
\]
The arbitrariness of $\d$ yields
$\displaystyle{\sup_{0 \le t \le T} |Z_{\e_n}(t)|^2} \to 0$ as $n \to \infty$. From
this and Equation \eqref{zepsineq} we conclude that as $n \to \infty$, 
\begin{equation}
\sup_{0 \le t \le T}|Z_{\e_n}(t)|^2 + \int_0^T \norm{Z_{\e_n}(s)}^2 ds \to 0.\label{firstmdp}
\end{equation}
Given any sequence $\{\e_n\}$ which converges to $0$, there exists a subsequence of $\{\e_n\}$ 
for which \eqref{firstmdp} holds. Hence, as $n \to \infty$, we have
\[
\sup_{0 \le t \le T}|Z_{\e_n}(t)|^2 +  \int_0^T \norm{Z_{\e_n}(s)}^2 ds \to 0.
\]
\end{proof}
\begin{proposition}
Given any $M > 0$, let $\{\varphi_{\e}\}$ be a family of functions in $\mathcal{U}_{+, \e}^M$ such that for some $\beta \in (0, 1]$, 
\[
\psi_{\e} 1_{\{|\psi_{\e}| \le \frac{\beta}{a(\e)}\}} \to \psi
\]
in law as $B_2(R)$-valued random elements for some fixed $R > 0$, where 
\[
\psi_{\e} = \frac{\varphi_{\e} - 1}{a(\e)}.
\]
Define $\mathcal{H}^{\e}(\varphi_{\e}, \nu_T) := Y^{\e, \varphi_{\e}}$. Then, 
the $\mathbb{D}([0, T]; V^{\prime})\cap L^2([0, T]; V)$-valued random processes 
$\mathcal{H}^{\e}(\varphi_{\e}, \nu_T)$ converge in distribution to  $\mathcal{H}^0(\psi, \nu_T)$ as $\e \to 0$.
\end{proposition}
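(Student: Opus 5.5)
The proof follows the weak convergence template already used for the LDP in Section 4. The plan is to get uniform estimates for $Y^{\e,\varphi_\e}$, then tightness, then identify the limit via Skorohod representation and the uniqueness of solutions of \eqref{yzeroeqn}.

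\emph{Step 1 (a priori bounds).} I will apply the It\^o formula to $|Y^{\e,\varphi_\e}(t)|^2$ in \eqref{yephiepeqn}, with a localizing stopping time as in Lemma \ref{scontrol}. By {\bf H.3}, the term $a(\e)\langle\bB(Y),Y\rangle$ vanishes, and $\langle \bB(\bu,Y),Y\rangle=0$. The remaining term is bounded by H.2:
\[
|\langle\bB(Y,\bu),Y\rangle|\le \tfrac12\norm{Y}^2+C|Y|^2\norm{\bu}^2 .
\]
For the control terms I split $\psi_\e=\psi_\e1_{\{|\psi_\e|\le\beta/a(\e)\}}+\psi_\e1_{\{|\psi_\e|>\beta/a(\e)\}}$. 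The first piece is handled by Cauchy--Schwarz in $L^2(\nu_T)$, using the growth bound H.5 and $\sup_t|\bu(t)|<\infty$. The second piece is handled through the bounds of Lemma C.3/Appendix D for $\mathcal U^M_{+,\e}$: $\int |\psi_\e|1_{\{|\psi_\e|>\beta/a(\e)\}} M_g\,d\nu_T\le C\, a(\e)$ and $\int|\psi_\e|^2 1_{\{|\psi_\e|\le\beta/a(\e)\}}d\nu_T\le C$. The difference term $\{g(\ubar^\e)-g(\bu)\}\psi_\e$ is bounded by H.6 as $L_g\,a(\e)|Y|\,|\psi_\e|$, which is again controlled by the same splitting. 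The martingale term carries the factor $\e/a(\e)$ in front of $\tilde N^{\e^{-1}\varphi_\e}$. Its quadratic variation is of order $(\e/a(\e)^2)\int |g|^2\varphi_\e\,d\nu_T$, which tends to $0$ because $b(\e)\to0$, using $\sup_t|\ubar^\e|$ bounded via Lemma \ref{scontrol}. Gronwall then gives
\[
\sup_\e \E\Big[\sup_t|Y^{\e,\varphi_\e}|^2+\int_0^T\norm{Y^{\e,\varphi_\e}}^2\Big]<\infty .
\]

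\emph{Step 2 (tightness).} I will verify tightness in $D([0,T];V')$ exactly as in the proof of the LDP convergence theorem: pointwise tightness via the eigenbasis truncation, and the Aldous criterion with the same three-term estimate. The drift terms are $O(\sqrt\theta)$. The term $a(\e)\bB(Y)$ is $O(a(\e))$ in $L^1(V')$. The noise term is $O(\sqrt{b(\e)})$. Combined with the $L^2(0,T;V)$ bound, Theorem \ref{tightness} yields tightness in $L^2(0,T;H)$.

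\emph{Step 3 (identification of the limit).} By Skorohod, I take an a.s. convergent subsequence
\[
(\psi_\e1_{\{|\psi_\e|\le\beta/a(\e)\}},\,Y^{\e,\varphi_\e},\,\text{martingale term})\to(\psi,Y,0),
\]
with strong convergence of $Y$ in $L^2(0,T;H)$. I then pass to the limit in the weak form tested against $h(s)e_k$, as in Step 1 of Theorem \ref{ThmLDP1}, term by term. The term $a(\e)\bB(Y)\to0$ by the uniform bounds. The linear terms $\bB(Y,\bu),\bB(\bu,Y)$ converge by strong $L^2(H)$ convergence together with the bound on $L^2(V)$. The term $g(\ubar^\e)-g(\bu)$ vanishes by Step 1. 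The large-jump part of $\psi_\e$ contributes $O(a(\e))$. The main term $\int(g(\bu),h e_k)\psi_\e1_{\{\cdot\}}\,d\nu_T$ converges by weak $L^2(\nu_T)$ convergence, since $(g(\bu),e_k)\in L^2(\nu_T)$. The limit therefore solves \eqref{yzeroeqn}, which is linear with a unique solution by the energy estimate of Proposition \ref{MDPTh1}. Hence $Y=\mathcal H^0(\psi,\nu_T)$, and the full family converges in law.

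To upgrade the convergence to $L^2(0,T;V)$, I will subtract $\mathcal H^0(\psi_\e1_{\{\cdot\}},\nu_T)$, which converges by Proposition \ref{MDPTh1}. I then run an energy estimate on the difference as in Step 3 of Theorem \ref{ThmLDP1}, whose forcing terms all tend to zero. I expect the main obstacle to be the uniform control of the large-jump part $\psi_\e1_{\{|\psi_\e|>\beta/a(\e)\}}$, which is not in $L^2$ uniformly, together with the interaction term $(g(\ubar^\e)-g(\bu))\psi_\e$. Both have to be handled with the $L^1$-type estimates specific to $\mathcal U^M_{+,\e}$ rather than Cauchy--Schwarz.
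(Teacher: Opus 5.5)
Your proposal is correct and shares the paper's skeleton: uniform bounds, Aldous tightness in $D([0,T];V')$, Skorohod representation, and identification through uniqueness for \eqref{yzeroeqn}. Two steps are done differently.

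First, you take the bound on $\ue_{\varphi_\e}$ from Lemma \ref{scontrol}. This is legitimate because $L_T(\varphi_\e)\le Ma^2(\e)\le M$, so the MDP controls lie in the LDP class. The paper instead re-derives this bound in its Step 1 from Lemmas \ref{lemma1}, \ref{lemma2} and \ref{lemma4}.

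Second, the paper removes the martingale by subtracting the stochastic convolution $\eta_\e$ of \eqref{etaeqn}. This turns the problem into a random PDE in the spirit of Theorem \ref{ThmLDP1}, but the paper then only asserts the key convergence \eqref{endeqn}. Your comparison with $\tilde Y^\e:=\mathcal{H}^0(\psi_\e 1_{\{|\psi_\e|\le\beta/a(\e)\}},\nu_T)$ is the better decomposition, and it supplies that missing estimate. The difference $D^\e=Y^{\e,\varphi_\e}-\tilde Y^\e$ solves the linearized equation with the following extra terms:
\begin{itemize}
\item $a(\e)\bB(Y^{\e,\varphi_\e})$, whose pairing with $D^\e$ equals $-a(\e)\langle\bB(Y^{\e,\varphi_\e}),\tilde Y^\e\rangle$ by {\bf H.3}, hence is $a(\e)O_P(1)$;
\item the martingale;
\item $(g(\ue_{\varphi_\e})-g(\bu))\psi_\e$;
\item the large-jump part.
\end{itemize}
All of these are $o_P(1)$. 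Pathwise Gronwall then yields $\sup_t|D^\e(t)|^2+\int_0^T\norm{D^\e(s)}^2ds\to0$ in probability. The continuous mapping theorem for $\mathcal{H}^0$ on the compact space $B_2(R)$ (Proposition \ref{MDPTh1}) finishes the argument.

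Carried out on the original probability space, this comparison step alone proves the proposition, so your Steps 2--3 become unnecessary. It also avoids a point your Skorohod step would otherwise have to address. The terms containing the full $\psi_\e$ (not just its truncation) are not functionals of the tuple you pass to Skorohod. You would have to show they vanish in probability and append them to that tuple.

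A small correction: for unbounded $M_g$, the bound $\int M_g|\psi_\e|1_{\{|\psi_\e|>\beta/a(\e)\}}\,d\nu_T\le Ca(\e)$ is not available. Lemma \ref{lemma2} gives only $\Gamma_{M_g}(\beta)a^{1-\zeta}(\e)+(1+T)\theta(\e)$, and Lemma \ref{lemma3} gives convergence to $0$. Your argument only uses boundedness in Step 1 and vanishing in the limit, so nothing breaks.
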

\begin{proof} In what follows,  we will denote $Y^{\e, \varphi_{\e}}$ by $Y^{\e}$ for simplicity of notation. Then 
\begin{align}
& dY^{\e}(t) + \bA Y^{\e}(t)dt + \left[a(\e)\bB(Y^{\e}(t)) + \bB(Y^{\e}(t), \bu(t)) + \bB(\bu(t), Y^{\e}(t))\right]dt \notag\\
&= \frac{\e}{a(\e)} \int_{\mathbb Z} g(\bu^{\e, \varphi_{\e}}(t), z) \tilde{N}^{\e^{-1}\varphi_{\e}}(dz, dt) +  \int_{\mathbb Z} g(\bu^{\e, \varphi_{\e}}(t), z)  \psi_{\e}(t, z) \nu(dz)dt
\end{align}
with $Y^{\e}(0) = 0$. 
Recall that $Y^0$ denotes the solution of 
\begin{equation}
 dY^{0}(t) + \nu\bA Y^{0}(t)dt + \left[ \bB(Y^{0}(t), \bu(t)) + \bB(\bu(t), Y^{0}(t))\right]dt =   
\int_{\mathbb Z} g(\bu(t), z)\psi(t, z) \nu(dz)dt 
\end{equation}
with $Y^0(0) = 0$. 

 {\sc Step 1:}  For any given $M > 0$, there exists an $\e_0 > 0$ such that 
  \begin{equation}
  \sup_{0 < \e \le \e_0} \left( E \sup_{t \in [0, T]} |\bu^{\e, \varphi_{\e}}(s)|^2 + 
  E \int_0^T \norm{\bu^{\e, \varphi_{\e}}(s)}^2ds\right) < C(\e_0, M) < \infty. \label{step1}
  \end{equation}
  where $C(\e_0, M)$ is independent of the control process  $\varphi_{\e} \in \mathcal{U}_{+, \e}^M$ 
 with $0 < \e \le \e_0$. 
 
 \par
 Indeed, by the energy equality, 
 \begin{align}
 &|\bu_{\varphi_{\e}}^{\e}(t)|^2 + 2 \int_0^t \norm{\bu_{\varphi_{\e}}^{\e}(s)}^2 ds \notag\\
 & = |\bu(0)|^2 + 2\int_0^t \langle f(s), \bu_{\varphi_{\e}}^{\e}(s)\rangle ds + 2\int_0^t \int_{\mathbb Z} (g(\bu_{\varphi_{\e}}^{\e}(s), z), \bu_{\varphi_{\e}}^{\e}(s)) ({\varphi_{\e}}(s, z) - 1)\nu(dz)ds\notag\\
 & \;\;\;\;+ 2\e\int_0^t \int_{\mathbb Z} (g(\bu_{\varphi_{\e}}^{\e}(s), z), \bu_{\varphi_{\e}}^{\e}(s))\tilde{N}^{\e^{-1}\varphi_{\e}}(ds, dz) + \e^2 \int_0^t \int_{\mathbb Z} |g(\bu_{\varphi_{\e}}^{\e}(s), z)|^2 N^{\e^{-1}\varphi_{\e}}(ds, dz)\notag\\
 & =: |\bu(0)|^2 + 2(A_{\e}(t) + B_{\e}(t) + M_{\e}(t)) + D_{\e}(t).\label{uepsterms}
 \end{align}
 By Young's inequality,
 \begin{equation}
 |A_{\e}(t)| \le \frac{1}{4}\int_0^t \norm{f(s)}^2_{V\pr}ds +  \int_0^t \norm{\bu_{\varphi_{\e}}^{\e}(s)}^2ds. \label{uept1}
 \end{equation}
 Next, for any fixed $\beta > 0$,
 \begin{align*}
 B_{\e}(t) &\le a(\e) \int_0^t \int_{\mathbb Z} M_g(z) (1 + |\bu_{\varphi_{\e}}^{\e}(s)|) |\bu_{\varphi_{\e}}^{\e}(s)| |\psi_{\e}(z, s)| \nu(dz)ds \\
 & = a(\e) \int_0^t \int_{\mathbb Z} M_g(z) (1 + |\bu_{\varphi_{\e}}^{\e}(s)|) |\bu_{\varphi_{\e}}^{\e}(s)| |\psi_{\e}(z, s)| 1_{\{|\psi_{\e}(z, s)| \ge \beta/a(\e)\}} \nu(dz)ds \\
 &\;\;\;\;+ a(\e) \int_0^t \int_{\mathbb Z} M_g(z) (1 + |\bu_{\varphi_{\e}}^{\e}(s)|) |\bu_{\varphi_{\e}}^{\e}(s)| |\psi_{\e}(z, s)|1_{\{|\psi_{\e}(z, s)| < \beta/a(\e)\}}  \nu(dz)ds\\
 & =: B_{\e}^{(1)}(t) + B_{\e}^{(2)}(t).
 \end{align*}
 In order to bound  $B_{\e}^{(1)}(t)$, first consider 
 \begin{align}
 &a(\e) \int_0^t \int_{\mathbb Z} M_g(z)|\bu_{\varphi_{\e}}^{\e}(s)| |\psi_{\e}(z, s)| 1_{\{|\psi_{\e}(z, s)| \ge \beta/a(\e)\}} \nu(dz)ds \notag\\
 &\text{which by Young's inequality,  Lemma \ref{lemma1} and Lemma \ref{lemma4} becomes}\notag\\
 &\le \frac{1}{2}\int_0^t \int_{\mathbb Z} M_g^2(z)|\varphi_{\e}(z, s) - 1|\nu(dz)ds + \frac{a(\e)}{2} \int_0^t \int_{\mathbb Z} |\bu_{\varphi_{\e}}^{\e}(s)|^2 |\psi_{\e}(z, s)| 1_{\{|\psi_{\e}(z, s)| \ge \beta/a(\e)\}}\nu(dz)ds \notag\\
 &\le \frac{1}{2} \{\kappa_{_{M_g}}a^2(\e) + \frac{M}{\d}t + M_2^2t + a(\e) \sup_{0 \le s \le t}|\bu_{\varphi_{\e}}^{\e}(s)|^2\int_0^t \int_{\mathbb Z} |\psi_{\e}(z, s)| 1_{\{|\psi_{\e}(z, s)| \ge \beta/a(\e)\}}\nu(dz)ds\}\notag\\
 & \le \gamma_{_T} + \frac{M}{2} a^2(\e) \kappa_1(\beta) \sup_{0 \le s \le t}|\bu_{\varphi_{\e}}^{\e}(s)|^2\notag
 \end{align}
 for all $t \le T$ with $\gamma_T$  being a suitable constant independent of $\e$. Hence,
 \begin{align}
 B_{\e}^{(1)}(t) &\le \gamma_{_T} + \sup_{0 \le s \le t}|\bu_{\varphi_{\e}}^{\e}(s)|^2  \big(\frac{M}{2} a^2(\e) \kappa_1(\beta)\notag\\ & \,\,+ 
 a(\e)\int_0^t  \int_{\mathbb Z} M_g(z) |\psi_{\e}(z, s)| 1_{\{|\psi_{\e}(z, s)| \ge \beta/a(\e)\}}\nu(dz)ds\big)\notag\\
 & \le \gamma_T + a(\e) \sup_{0 \le s \le t}|\bu_{\varphi_{\e}}^{\e}(s)|^2  \left(\frac{M}{2} a(\e) \kappa_1(\beta) + 
 \Gamma_{_{M_g}}( \beta)  + (1 + t) \theta(\e)\right)\label{uept2.1}
 \end{align}
 by using Lemma \ref{lemma2} with $\zeta = 1$.
 In order to bound $B_{\e}^{(2)}(t)$, first 
 note that 
 \begin{align}
& \int_0^t \int_{\mathbb Z} M_g(z) |\bu_{\varphi_{\e}}^{\e}(s)| |\psi_{\e}(z, s)|1_{\{|\psi_{\e}(z, s)| < \beta/a(\e)\}}  \nu(dz)ds
\notag\\
&\le \frac{1}{2}\int_0^t \int_{\mathbb Z}\left(M_g^2(z) + |\bu_{\varphi_{\e}}^{\e}(s)|^2 |\psi_{\e}(z, s)|^2\right) 1_{\{|\psi_{\e}(z, s)| < \beta/a(\e)\}} \nu(dz)ds\notag\\
& \le \frac{1}{2} \left(M_2^2 T +  \sup_{0 \le s \le t}|\bu_{\varphi_{\e}}^{\e}(s)|^2 M \kappa_2(\beta)\right),\, \text{by Lemma \ref{lemma1} (iii), and hence} \notag 
 \end{align}
 \begin{equation}
 B_{\e}^{(2)}(t) \le \frac{a(\e)}{2}\left(M_2^2 T + \sup_{0 \le s \le t}|\bu_{\varphi_{\e}}^{\e}(s)|^2 (M \kappa_2(\beta) + \Gamma_{_{M_g}}(\beta) + (1+t)\theta(\e) +
 \rho_{_{M_g}}(\beta)\sqrt{t})\right) \label{uept2.2}
 \end{equation}
 by Lemma \ref{lemma2} (ii). Combining \eqref{uept2.1} and \eqref{uept2.2}, 
 \begin{equation}
B_{\e}(t)  \le  \gamma_{_T} + \frac{a(\e)}{2}M_2^2 T + a(\e)\eta(\beta, \epsilon, T)(\sup_{0 \le s \le t}|\bu_{\varphi_{\e}}^{\e}(s)|^2) \label{uept3}
 \end{equation}
 where $\eta(\beta, \epsilon, T) = \frac{M}{2} a(\e) \kappa_1(\beta) +  \Gamma_{M_g}( \beta)  + (1 + T) \theta(\e) +M \kappa_2(\beta) + \rho_{M_g}(\beta)\sqrt{T}$. 
 
 \sni
Define the stopping time $\tau_N^{\e} = \inf \{t \in [0, T]: |\bu_{\varphi_{\e}}^{\e}(t)|^2 \ge N\}$. Then, 
$\{M_\e(t \wedge \tau_N^{\e}): 0 \le t \le T\}$ is a martingale and 
\begin{align}
&E\sup_{0 \le s \le t} |M_{\e}(s \wedge \tau_N^{\e})| \notag\\
&\le \e E \left[\left(\int_0^{t\wedge \tau_N^{\e}}\int_{\mathbb Z} M_g^2(z)(1 + |\bu_{\varphi_{\e}}^{\e}(s)|)^2|\bu_{\varphi_{\e}}^{\e}(s)|^2\varphi_{\e}(z, s)\nu(dz)ds\right)^{1/2}\right] \notag\\
& \le \e E\left[ \sup_{0 \le s \le t}(1 + |\bu_{\varphi_{\e}}^{\e}(s\wedge \tau_N^{\e})|^2) 
\left(\int_0^{t}\int_{\mathbb Z} M_g^2(z) \varphi_{\e}(z, s)\nu(dz)ds\right)^{1/2}\right] \notag\\
&\le  \e \{\kappa_{_{M_g}}(\d, M, t)a^2(\e) + \frac{M}{\d}t\}^{1/2} E\left[ \sup_{0 \le s \le t}(1 + |\bu_{\varphi_{\e}}^{\e}(s\wedge \tau_N^{\e})|^2)\right] \label{uept4}
\end{align}
by Lemma \ref{lemma4}.  As before, 
\begin{align}
E \sup_{0 \le s \le t} |D_{\e}(s \wedge \tau_N^{\e})| & \le 2\e E \int_0^t \int_{\mathbb Z} M_g^2(z)(1 + |\bu_{\varphi_{\e}}^{\e}(s \wedge \tau_N^{\e})|^2)\varphi_{\e}(z, s)\nu(dz)ds\notag\\
& \le 2\e\{\kappa_{_{M_g}}(\d, M, t)a^2(\e) + \frac{M}{\d}t\}(1 +  E (\sup_{0 \le s \le t} |\bu_{\varphi_{\e}}^{\e}(s \wedge \tau_N^{\e})|^2)).\label{uept5}
\end{align}
Taking supremum over $[0, T\wedge \tau_N^{\e}]$ term-by-term and then taking expectation on the left side of  \eqref{uepsterms}, and using
 the estimates \eqref{uept1} to \eqref{uept5}, we conclude that 
 \begin{align}
 &\frac{1}{2}E \sup_{0 \le t \le T} |\bu_{\varphi_{\e}}^{\e}(t \wedge \tau_N^{\e})|^2 +  E \int_0^{T \wedge \tau_N^{\e}}\norm{\bu_{\varphi_{\e}}^{\e}(s)}^2ds \notag\\
 &\le K_T \label{uept6}
 \end{align}
 where $K_T$ is a constant which is uniform for all $N \in \mathbb{N}$ and for all $\e$ small enough. Hence 
 $\tau_N^{\e} \to \infty$ a.s.
 Letting $N \to \infty$ in \eqref{uept6}, the bound \eqref{step1} is obtained.

 \ni
 {\sc Step 2:} Using Step 1, we claim that there exists an $\e_1 > 0$ such that 
  \begin{equation}
  \sup_{0 < \e \le \e_0} \left( E \sup_{t \in [0, T]} |Y^{\e, \varphi_{\e}}|^2 + 
  E \int_0^T \norm{Y^{\e, \varphi_{\e}}(s)}^2ds,\right) < C(\e_1) < \infty.\label{step2}
  \end{equation}
  where $C(\e_1, M)$ is independent of the control process  $\varphi_{\e} \in \mathcal{U}_{+, \e}^M$ 
 with $0 < \e \le \e_1$. 
 
 \sni
 To show this, let us denote $Y^{\e, \varphi_{\e}}$ as $Y_{\e}$ here. The energy equality for $Y^{\e}$ is given by
 \begin{align}
 &|Y^{\e}(t)|^2 + 2  \int_0^t \norm{Y^{\e}(s)}^2ds + 2\int_0^t \langle B(Y^{\e}(s), \bu(s)), Y^{\e}(s)\rangle ds\notag\\
 & =  \frac{2\e}{a(\e)} \int_0^t \int_{\mathbb Z} (g(\bu_{\varphi_{\e}}^{\e}(s), z), Y^{\e}(s))\tilde{N}^{\e^{-1}\varphi_{\e}}(dz, ds) 
 + \frac{{\e}^2}{a^2(\e)} \int_0^t \int_{\mathbb Z} |g(\bu_{\varphi_{\e}}^{\e}(s), z)|^2 dN^{{\e}^{-1}\varphi_{\e}}\notag\\
&\,\, + 2\int_0^t \int_{\mathbb Z} (g(\bu_{\varphi_{\e}}^{\e}(s), z) - g(\bu(s), z), Y_s^{\e}) \psi_{\e}(s, z)\nu(dz)dt + 2\int_0^t\int_{\mathbb Z} (g(\bu(s), z), Y_s^{\e})\psi_{\e}(s, z) \nu(dz)ds\notag\\
& : = M_{\e}(t) + A_{\e}(t) + B_{\e}(t) + C_{\e}(t).  \label{enerYep}
\end{align} 
 We will bound each of the terms in \eqref{enerYep}. First, consider the absolute value of the third  term on the left side, namely, 
 \begin{align}
 | \int_0^t \langle B(Y^{\e}(s), \bu(s)), Y^{\e}(s)\rangle ds|  & \le \int_0^t | Y^{\e}(s)| \,\norm{\bu(s)} \norm{Y^{\e}(s)}ds\notag\\
 & \le  \int_0^t \norm{Y^{\e}(s)}^2 ds + 2 \int_0^t |Y^{\e}(s)|^2 \norm{\bu(s)}^2 ds\notag 
 \end{align} 
 so that 
 \begin{equation}
 \sup_{0 \le r \le t} | 2\int_0^r \langle B(Y^{\e}(s), \bu(s)), Y^{\e}(s)\rangle ds| \le   \int_0^t \norm{Y^{\e}(s)}^2 ds + 8 \int_0^t  \sup_{0 \le r \le s} |Y^{\e}(r)|^2  |\bu(s)|^2 ds \label{Yep1}
 \end{equation}
 by Young's inequality. Next, 
 \begin{align}
 E \sup_{0 \le s \le t}|M_{\e}(s)| & \le \frac{4\sqrt{\e}}{a(\e)} E\left[ \left(\int_0^t \int_{\mathbb Z} M_g(z)^2 (1 + |\bu_{\varphi_{\e}}^{\e}(s)|^2) \,|Y^{\e}(s)|^2 \varphi_{\e}(z, s)\nu(dz)ds\right)^{1/2}\right]\notag\\
 & \le  \frac{2\sqrt{\e}}{a(\e)}E \sup_{0 \le s \le t}|Y^{\e}(s)|\sup_{0 \le s \le t}(1 + |\bu_{\varphi_{\e}}^{\e}(s)|) 
 \left(\int_0^t \int_{\mathbb Z}M_g(z)^2 \varphi_{\e}(z, s)\nu(dz)ds\right)^{1/2}\notag\\
 & \le \frac{\sqrt{\e}}{a(\e)}  \left(E \sup_{0 \le s \le t}|Y^{\e}(s)|^2 + \kappa_{M_g}(a^2(\e) + t) E \sup_{0 \le s \le t}(1 + |\bu_{\varphi_{\e}}^{\e}(s)|)^2\right)
 \label{Yep2}
 \end{align}
 by Lemma \ref{lemma4}.
 Likewise, we obtain the upper bounds
 \begin{align}
 |A_{\e}(t) | &\le \frac{2\e^2}{a^2(\e)}\int_0^t \int_{\mathbb Z} M_g(z)^2 (1 + |\bu_{\varphi_{\e}}^{\e}(s)|^2)\e^{-1} \varphi_{\e}(z, s) \nu(dz)ds\notag\\
 & \le \frac{\e}{a^2(\e)} (1 + \sup_{0 \le s \le t} |\bu_{\varphi_{\e}}^{\e}(s)|^2)\int_0^t \int_{\mathbb Z} M_g(z)^2 \varphi_{\e}(z, s)\nu(dz)ds\notag\\
 & \le \frac{\e}{a^2(\e)} (1 + \sup_{0 \le s \le t} |\bu_{\varphi_{\e}}^{\e}(s)|^2)\kappa_{M_g}(a^2(\e) + t) \label{Yep3}
 \end{align}
 and by Lemma \ref{lemma2}, 
 \begin{align}
 |B_{\e}(t)| & \le a(\e) \int_0^t \int_{\mathbb Z} L_g(z) |Y^{\e}(s)|^2 |\psi_{\e}(z, s)| \nu(dz)ds\notag\\
 & =  a(\e)  \int_0^t |Y^{\e}(s)|^2\int_{\mathbb Z} L_g(z)  |\psi_{\e}(z, s)| \nu(dz)ds\notag\\
 & \le  a(\e)  (\rho_{_{L_g}}(\beta) \sqrt{T} + (1 + T) \theta(\e)) \sup_{0 \le s \le t} |Y^{\e}(s)|^2.\label{Yep4}
 \end{align}
 Finally, by Young's inequality and Lemma \ref{lemma2}, we have for any fixed $\delta > 0$,
 \begin{align}
 |C_{\e}(t)| & \le \int_0^t \int_{\mathbb Z}  M_g(z) \{\frac{2}{\delta}(1 + |\bu(s)|)^2) + \delta |Y^{\e}(s)|^2\} |\psi_{\e}(z, s)|\nu(dz)ds\notag\\
 & \le (\frac{2}{\delta}\sup_{0 \le s \le t} (1 + |\bu(s)|)^2) + \delta \sup_{0 \le s \le t} |Y^{\e}(s)|^2)\int_0^t   \int_{\mathbb Z} M_g(z)|\psi_{\e}(z, s)|\nu(dz)ds\notag\\
 & \le  [\frac{2}{\delta}\sup_{0 \le s \le t} (1 + |\bu(s)|)^2) + \delta \sup_{0 \le s \le t} |Y^{\e}(s)|^2] (\rho_{_{M_g}}(\beta) \sqrt{T} + (1 + T) \theta(\e)). \label{Yep5}
 \end{align}
 Define the stopping time the stopping time $\sigma_N^{\e} = \inf\{t: |Y^{\e}(t)| \ge N\}$. 
 Take supremum term-by-term over the interval $[0, t \wedge \sigma_N^{\e}]$, and then expectation in \eqref{enerYep}.  Using \eqref{Yep1} to \eqref{Yep5} in \eqref{enerYep} upto the stopping time 
 $ \sigma_N = \inf\{t: |Y^{\e}(t)| \ge N\}$, and proceeding as in Step 1, we obtain \eqref{step2} by  Gronwall 
 Lemma.
 
 {\sc Step 3:} Let $\{\varphi_{\e}\}$ be as in the statement of the Theorem.
 By Step 2, we conclude that $\{Y^{\e}: 0 < \e < \e_1\}$ is tight as random elements taking values in  $L^2(0, T; V) \cap D([0, T]; V\pr))$ by using arguments similar to those of Theorem 4.3 and 4.4. The tightness argument in $D([0, T]; V\pr)$ is briefly outlined below.

We first need to prove that for each fixed $t $ in a dense subset $J$ of $[0, T]$, $\{Y^{\e}_t: 0 < \e < \e_1\}$ is  tight as $V\pr$-valued random variables.  Consider any sequence $\{Y^{\e_n}_t\}$ from the family. Since $V\pr$ is a separable Hilbert space,
 it suffices to show the following:
 
 (i) For each $t \in J$, and every $h \in V\pr$, the laws of the real random variables $\{(Y^{\e_n}_t, h)_{V\pr}\}$ forms a tight 
 family, and 
 
 (ii) For each $\d > 0, \rho > 0$, and $t \in J$, there exists a finite dimensional subspace $F$ of $V\pr$ such that 
 $\limsup_{\n \to \infty} P\{d(Y^{\e_n}_t, F) > \rho\} < \d$ where $d$ denotes distance in $V\pr$. 
 
 Since $\sup_{n}\mathbb{E}[|Y^{\e_n}_t|^2]$ is finite, 
 condition (i) follows. To verify condition (ii), set
 $F_k = [{\rm{Span}} \,\{e_1, \cdots, e_k\}]$ where $\{e_j\}$ are the eigenfunctions of the Stokes operator.  
 
 From the estimate 
 $\sup_{n}\mathbb{E}[\sup_{0 \le t \le T}|Y^{\e_n}_t|^2]< \infty$, we have
 \begin{align*}
 &\lim_{k \to \infty} \sup_{n} P\{d(Y^{\e_n}_t, F_k)  > \rho\} \\
 &= \lim_{k \to \infty} \sup_{n} 
 P\{\sup_{0 \le t \le T}\norm{(I - P_k) Y^{\e_n}_t}_{V\pr} > \rho\} \\
 & = 0, \,\,\,\,\text{where $P_k$ denotes projection in $V\pr$ to $F_k$}.
 \end{align*}
 Next, let $\{\tau_n\}$ be a family of stopping times each taking values in $[0, T]$ such that there exists a $\d_n > 0$ with $\tau_n +\d_n \le T$
 for each $n$ and $\d_n \to 0$. We need to show the Aldous condition that 
 $\norm{Y^{\e_n}_{ \tau_n +\d_n } - Y^{\e_n}_{ \tau_n}}_{V\pr} \to 0$ in probability as $n \to \infty$.  An inspection of the proof of the estimate in Step 2 shows that 
 \[
 \lim_{n \to 0} \mathbb{E} \left[\sup_{0 \le s \le \d_n} |Y^{\e_n}_{ \tau_n + s } - Y^{\e_n}_{ \tau_n}|^2\right] = 0
 \]
 which implies the desired convergence in probability.

{\sc Step 4:} Let $\{\eta_{\e}(t)\}$ denote the solution of 
the equation 
\begin{equation}
d\eta_{\e}(t) + \bA \eta_{\e}(t) dt = \frac{\e}{a(\e)} \int_{\mathbb Z} g(u^{\e, \varphi_{\e}}(t), z) \tilde{N}^{\e^{-1}\varphi_{\e}}(dz, dt) \label{etaeqn}
\end{equation}
 with $\eta_{\e}(0) = 0$. As before, one establishes the existence and uniqueness of solutions to \eqref{etaeqn} for $\e > 0$ small enough 
 with values in $D([0, T]; V\pr) \cap L^2(0, T; V)$.  Besides, it is straightforward to show that
 \begin{equation}
 \lim_{\e \to 0} \mathbb{E}\{\sup_{t \in [0, T]} |\eta_\e(t)|^2 + 
 \int_0^T \norm{\eta_{\e}(s)}^2ds\} = 0. \label{etaest}
 \end{equation}
 
Define the space $\Lambda := \mathbb{D}([0, T]; V\pr)\cap L^2([0, T]; V)$.   Define the space 
$\Xi := \Lambda \times \Lambda \times \bar{\mathbb{M}} \times B_2(R)$.
Then $(Y^{\e}, \eta_{\e}, \e N^{\e^{-1}\varphi_{\e}}, \psi_{\e}1_{\{|\psi_{\e}| \le \beta/a(\e)\}})$ 
 is tight as $\Xi$-valued random elements. Let $(\tilde{Y}^0, 0,  0, \psi)$ 
 be a limit point of the above tight family. We will show that 
 $\tilde{Y}^0$ has the same law as $Y^0$. Let $\{\e_n\}$ be a sequence 
 which converges to zero, and 
 $(Y^{\e_n}, \eta_{\e_n}, \e N^{\e_n^{-1}\varphi_{\e_n}}, \psi_{\e_n}1_{\{|\psi_{\e_n}| \le \beta/a(\e_n)\}})$  be  weakly convergent to $(\tilde{Y}^0, 0, 0, \psi)$. 
 
Let $\mathcal{L}$ denote the law of a random element. By the Skorohod representation theorem, by a change of stochastic basis, one obtains the existence of a sequence of random elements 

$(\overline{Y}^{\e_n}, \overline{\eta}_{\e_n}, \e \hat{N}^{\e^{-1}\varphi_{\e}}, \overline{\psi}_{\e_n}1_{\{|\overline{\psi}_{\e_n}| \le \beta/a(\e_n)\}})$  which converges a.s.  to $(\overline{Y}^0, 0,  0, \overline{\psi})$ such that 
\[ \mathcal{L}\,
(\overline{Y}^{\e_n}, \overline{\eta}_{\e_n}, \e_n \hat{N}^{\e_n^{-1}\varphi_{\e_n}}, \overline{\psi}_{\e_n}1_{\{|\overline{\psi}_{\e_n}| \le \beta/a(\e_n)\}})
= \mathcal{L}\,
(Y^{\e_n}, \eta_{\e_n}, \e_n \hat{N}^{\e_n^{-1}\varphi_{\e_n}}, \psi_{\e_n}1_{\{|\psi_{\e_n}| \le \beta/a(\e_n)\}})
\]
for each $n$,  and 
$
\mathcal{L}\,
(\overline{Y}^0, \overline{\psi})
= \mathcal{L}\,(\tilde{Y}^0, \psi).
$
For notational simplicity, we will write 

$(\overline{Y}^{\e_n}, \overline{\eta}_{\e_n}, \e_n \hat{N}^{\e_n^{-1}\varphi_{\e_n}}, \overline{\psi}_{\e_n}1_{\{|\overline{\psi}_{\e_n}| \le \beta/a(\e_n)\}})$ as 
$(\overline{Y}_n, \overline{\eta}_n, \e_n \hat{N}_n, \overline{\psi}_n1_{\{|\overline{\psi}_n| \le  \beta/a(\e_n)\}})$ and the limit as 

$(\overline{Y}^0, 0, 0, \overline{\psi})$.

Define the process $X_n := \overline{Y}_n - \overline{\eta}_n$. Then $X_n$ satisfies 
\begin{align}
&dX_n(t) + \{\bA(X_n(t)) + a(\e)\bB(X_n(t) + \overline{\eta}_n(t)) + \bB(X^n(t) + \overline{\eta}_n(t), \bu(t)) + \bB(\bu(t), X_n(t) + \overline{\eta}_n(t))\}dt \notag\\
& = \int_{\mathbb Z} g(a(\e)(X_n(t) + \overline{\eta}_n(t)) + \bu(t), z)\overline{\psi}_n(z, t)\nu(dz)dt. \label{xneqn}
\end{align}
with $X_n(0) = 0$.
Define $\hat{Y}^0$ by 
\[
 d\hat{Y}^0(t) + \{\bA(\hat{Y}^0(t)) + \bB(\hat{Y}^0(t), \bu(t)) + \bB(\bu(t), \hat{Y}^0(t))\}dt = \int_{Z} g(t, \bu(t), z)\overline{\psi}(t, z)\nu(dz)dt
\]
We also know that 
\begin{equation}
 \sup_{0 < \e \le \e_0} \left( E \sup_{t \in [0, T]} |{X}_n(t) - \hat{Y}^0(t)|^2 + 
  E \int_0^T \norm{{X}_n(s) - \hat{Y}^0(s)}^2ds,\right) \to 0\label{endeqn}
\end{equation}
almost surely as $\e \to 0$. Using \eqref{etaest} along with \eqref{endeqn}, we proceed as in the last part of the proof of Theorem 4.5 to argue that
${X}_n$ converges in $\Xi$ to $\hat{Y}^0$. Since $\overline{Y}_n \to \overline{Y}^0$ a.s., we obtain that $\overline{Y}^0 = \hat{Y}^0$. 
Thus $\overline{Y}_n \to \hat{Y}^0$ which completes the proof.
\end{proof}

\appendix
\section{Examples of SPDEs}
The class of SPDEs of the form \eqref{epsiloneqn} is motivated by the following equations that arise 
in fluid dynamics. 

\smallskip
\noindent
{\sc 1. Two-dimensional  Navier-Stokes system}

 Let $G$ be a bounded open domain in $\mathbb{R}^{2}$ with a smooth boundary $\partial
G$. For $t\in [0,T]$, consider the Navier-Stokes system  for a viscous
incompressible fluid flow with no-slip condition at the boundary:
\begin{align}\label{nse}
\frac{\partial\mathbf{u}}{\partial t}+(\mathbf{u}\cdot\nabla)\mathbf{u} 
-\Delta\mathbf{u}+\nabla p &= 0, \qquad
\nabla\cdot\mathbf{u} =0  
\end{align}
with initial and boundary data given by
\begin{align*}
\mathbf{u}(0,x)  &  =\mathbf{u}_{0}(x)\,\,\forall\,\,x\in G\\
\mathbf{u}(t,x)  &  =0\,\,\forall \,x\in\partial G,\, \text{and}\,\, t\geq0
\end{align*}
In the above, $p$ denotes the pressure field and is real-valued. The viscosity coefficient is taken to be $1$. 

The following
characterizations of the spaces ${H}$ and ${V}$ are
well-known:%
\begin{align*}
{H}  &  =\{\mathbf{u}\in L^{2}(G);\nabla\cdot\mathbf{u}%
=0,\,\,\mathbf{u}\cdot\mathbf{n}|_{\partial G}=0\},\\
{V}  &  =\{\mathbf{u}\in W_{0}^{1,2}(G):\nabla\cdot\mathbf{u}=0\}
\end{align*}
where $\mathbf{n}$ is the outward normal on $\partial G$.
 Here, derivatives are to be understood in the weak sense, and $\bu \cdot \mathbf {n}$ is taken in the sense of trace operators.

Let ${V}^{\prime}$ be the dual of ${V}$. We will denote the
norm in ${H}$ by $|\cdot|$, and the inner product in ${H}$ by
$(\cdot,\cdot)$. We have the dense, continuous  embedding (see
\cite{Tem}):%
\[
{V}\subset_{\rightarrow}{H}={H}^{^{\prime}}%
\subset_{\rightarrow}{V}^{^{\prime}}.
\]
 Define the linear
operator $\mathbf{A}:{\mathcal{D}}(\mathbf{A})\rightarrow {H}$ by
$\mathbf{A}\mathbf{u}=-\Delta\mathbf{u}$. Since ${V}={\mathcal{D}%
}(\mathbf{A}^{1/2})$, we endow ${V}$ with the norm $\left\Vert
\mathbf{u}\right\Vert =\left\vert \mathbf{A}^{1/2}\mathbf{u}\right\vert $. The
${V}$-norm is equivalent to the $W^{1,2}$-norm by the Poincar\'{e}
inequality.  The pairing between ${V}$ and its dual
${V}^{\prime}$ is indicated by $\left\langle \cdot,\cdot\right\rangle $.
The operator $\mathbf{A}$ is known as the Stokes operator and is positive,
self-adjoint, with compact resolvent. 

Let ${\mathcal{V}}$ denote the space of all smooth, solenoidal 2-dimensional vector valued functions with compact support in $G$.
Define $b(\cdot,\cdot,\cdot): {\mathcal{V}}\times {\mathcal{V}}\times {\mathcal{V}}%
\mapsto\mathbb{R}$ by
\[
b(\mathbf{u},\mathbf{v},\mathbf{w})={\sum_{i,j=1}^{2}\int_{G}{u_{i}}}\left(
x\right)  {{{\frac{\partial v_{j}}{\partial x_{i}}}}}\left(  x\right)
{{w_{j}}}\left(  x\right)  {\,dx.}%
\]
This allows us to define $\mathbf{B}: {V}\times {V}%
\mapsto {V}^{\prime}$ as the continuous bilinear operator satisfying%
\[
\left\langle \mathbf{B}(\mathbf{u},\mathbf{v}), \mathbf{w}\right\rangle
=b(\mathbf{u},\mathbf{v},\mathbf{w})\,\,\mathrm{{for}\,\,{all}\,\,}%
\mathbf{u,v,w}\in {\mathcal{V}}\mathrm{.}%
\]
Note that $b(\mathbf{u},\mathbf{v},\mathbf{w})=-b(\mathbf{u},\mathbf{w}%
,\mathbf{v})$. By H\"older's inequality, Sobolev embedding and interpolation inequality, one obtains 
\begin{equation}
 \langle\bB (\mathbf{u}, \mathbf{v}), \mathbf{w}\rangle^2 \le C  |\bu|\,\norm{\bu}\,|\bv| \,\norm{\bv}\,\norm{w}^2 \,\,\text{for all} \,\, \bu, \bv, \bw \in V \label{estimB}
\end{equation}
where $C$ denotes a constant that depends on $G$.

We will denote $\mathbf{B}(\mathbf{u},\mathbf{u})$ by
$\mathbf{B}(\mathbf{u})$, and $\mathbf{B}(\mathbf{u})$ satisfies the
 estimate:
\begin{equation}
\left\Vert \mathbf{B}(\mathbf{u})\right\Vert _{{V}^{\prime}}%
\leq C\,|\mathbf{u}|\,||\mathbf{u}|| \label{estim}%
\end{equation}
The local Lipschitz property of $\bB$ that appears in hypotheses {\bf{H.4}} is readily obtained 
from \eqref{estimB}. Next, we proceed to establish the local monotonicity 
property in hypothesis {\bf H.7}.
\begin{proposition} \label{prop1} For a given $r > 0$, let $B_r$ denote the $L^4(G)$ ball in $V$:
$B_r = \{ \bv \in V: ||\bv||_{L^4(G)} \le r\}$. Define  the nonlinear operator $F$ on $V$ by
$F(\bu):= - \bA \bu - \bB(\bu)$.  Then for $0 < \e < \frac{\l_1}{2 L^2_2}$ where $\l_1$ is the smallest eigenvalue of $\bA$ and $L_2$ is the $L^2(\nu)$ norm of the function $L_g$, the pair $(F, \e g)$
is monotone in $B_r$: ie. for any $\bu\in V$ and $\bv \in B_r$, if $\bw$ denotes $ \bu - \bv$, then for all $t \in [0, T]$,
\begin{equation}
\langle F(\bu) - F(\bv), \bw \rangle  - r^4\,|\bw|^2 + \e\int_{\mathbb Z} |g(t, \bu, z) - g(t, \bv, z)|^2\nu(dz) \le 0.
\end{equation}
\end{proposition}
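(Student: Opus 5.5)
We expand the pairing directly. With $\bw = \bu - \bv$ and bilinearity of $\bB$,
\[
\langle F(\bu) - F(\bv), \bw\rangle = -\norm{\bw}^2 - \langle \bB(\bu) - \bB(\bv), \bw\rangle .
\]
Writing $\bu = \bv + \bw$ gives $\bB(\bu) - \bB(\bv) = \bB(\bv,\bw) + \bB(\bw,\bv) + \bB(\bw,\bw)$. By the antisymmetry in {\bf H.3}, $\langle \bB(\bv,\bw),\bw\rangle = 0$ and $\langle \bB(\bw,\bw),\bw\rangle = 0$. Thus only the term $\langle \bB(\bw,\bv),\bw\rangle = -\langle \bB(\bw,\bw),\bv\rangle = -b(\bw,\bw,\bv)$ survives. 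The plan is to bound this term using only the $L^4(G)$ norm of $\bv$, since that is the quantity controlled in $B_r$.

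Next, Hölder's inequality gives
\[
|b(\bw,\bw,\bv)| \le \norm{\bw}_{L^4}\,\norm{\nabla \bw}_{L^2}\,\norm{\bv}_{L^4} \le r\,\norm{\bw}_{L^4}\,\norm{\bw}.
\]
In two dimensions, Ladyzhenskaya's inequality $\norm{\bw}_{L^4}^2 \le C|\bw|\,\norm{\bw}$ then yields $|b(\bw,\bw,\bv)| \le C r\,|\bw|^{1/2}\norm{\bw}^{3/2}$. Young's inequality with exponents $4/3$ and $4$ splits this as
\[
C r\,|\bw|^{1/2}\norm{\bw}^{3/2} \le \tfrac12\norm{\bw}^2 + C' r^4|\bw|^2 .
\]
The constant $C'$ depends on $G$ through the Ladyzhenskaya constant. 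As in the proposition, I would absorb it by normalizing the $L^4$ ball (or read $r^4$ as $C'r^4$); this is where the weight $h(r) = r^4$ comes from. Combining the steps so far,
\[
\langle F(\bu)-F(\bv),\bw\rangle - r^4|\bw|^2 \le -\tfrac12\norm{\bw}^2 .
\]

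For the noise term, {\bf H.6} gives $\e\int_{\mathbb Z}|g(t,\bu,z)-g(t,\bv,z)|^2\nu(dz) \le \e L_2^2 |\bw|^2$. The Poincaré-type inequality $|\bw|^2 \le \lambda_1^{-1}\norm{\bw}^2$, which follows from the spectral assumption {\bf H.1}, converts this into $\e L_2^2\lambda_1^{-1}\norm{\bw}^2$. The hypothesis $\e < \lambda_1/(2L_2^2)$ makes this strictly less than $\tfrac12\norm{\bw}^2$. The total is therefore at most $(\e L_2^2\lambda_1^{-1} - \tfrac12)\norm{\bw}^2 \le 0$, which is the claim.

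The main obstacle is the trilinear estimate. The bound must involve only $\norm{\bv}_{L^4}$, not $\norm{\bv}$, and this requires using antisymmetry to move the derivative onto $\bw$ before applying Hölder and Ladyzhenskaya. The Young exponents must also be chosen so that exactly the fourth power of $r$ appears and at most half of the dissipation is consumed. The remaining steps are routine.
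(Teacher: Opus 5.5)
Your proposal is correct and is essentially the paper's proof: antisymmetry reduces the nonlinear term to $-\langle \bB(\bw),\bv\rangle$, followed by H\"older, the Ladyzhenskaya bound and Young's inequality (which give the paper's $\tfrac{27}{32}\norm{\bv}_{L^4}^4|\bw|^2$), and finally {\bf H.6}, the bound $\lambda_1|\bw|^2\le\norm{\bw}^2$ and the choice of $\e$. Your caveat about the Ladyzhenskaya constant is fair, since the paper silently uses $\norm{\bw}_{L^4}\le|\bw|^{1/2}\norm{\bw}^{1/2}$, but it is harmless because {\bf H.7} only requires some continuous $h$, so $h(r)=C'r^4$ works equally well.
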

\begin{proof} First, it is clear that $\langle\bA\bw, \bw\rangle = ||\bw||^2$.
Using the bilinearity of the operator $B$, it follows
that
\[\langle\bB(\bu), \bw\rangle = -\langle\bB(\bu, \bw), \bv\rangle
\]
Likewise, $\langle\bB(\bv), \bw\rangle = -\langle\bB(\bv, \bw), \bv\rangle$.
Using  these two equations, one obtains
\[\langle\bB(\bu)-\bB(\bv), \bw\rangle = -\langle\bB(\bw), \bv\rangle.\]
From the H\"older inequality, one has
\begin{eqnarray}
|\langle\bB(\bu) - \bB(\bv), \bw\rangle| &\le &||\bw||_{L^4(G)}||\bw||\,||\bv||_{L^4(G)}\nonumber\\
&\le &|\bw|^{1/2}\,||\bw||^{3/2}\,||\bv||_{L^4(G)}\nonumber\\
&\le &\frac{1}{2}||\bw||^2 + \frac{27}{32}|\bw|^2\,||\bv||^4_{L^4(G)}\label{27}
\end{eqnarray}
where the last estimate accrues by Young's inequality. 
The definition of the operator $F$ in the theorem yields
\begin{equation}
\langle F(\bu) - F(\bv), \bw \rangle \le -\frac{1}{2}||\bw||^2 + r^4 |\bw|^2\label{mon}
\end{equation}
since $||\bv||_{L^4(G)} \le r$. The proof is finished upon using hypothesis {\bf H.6} and the choice of $\e$.
\end{proof}
From the proposition it follows that hypothesis {\bf H.7} holds with the norm $\norm{}_{_0} = \norm{}_{L^4(G)}$. Indeed, hypotheses {\bf H.1} to {\bf H.4}, and {\bf H.7} 
that involve the operators $\bA$ and $\bB$ have all been shown to hold.
Hypotheses {\bf H.5}, {\bf H.6} and {\bf H.8} are conditions imposed on the noise coefficient. 

We assume that $\mathbf{u}_{0}$ is ${H}$-valued. Let $\Pi$ denote the
 projection of $L^{2}(G)$ into ${H}$. Applying $\Pi$ 
to each term of the Navier-Stokes system, and invoking the Helmholtz-Hodge decomposition
of $L^{2}(G)$ into solenoidal and irrotational components, we  write
the system \eqref{nse} with the body force $f$ and the noise term as%
\begin{equation}
d\mathbf{u}\left(  t\right)  +[\mathbf{A}\,\mathbf{u}\left(  t\right)
+\mathbf{B}(\mathbf{u}\left(  t\right)  )]\,dt=  f(t)dt +  \int_{\mathbb Z} g(t, \bu(t-), z) \tilde{N}^{1/\e}(dt, dz)
\label{de1}%
\end{equation}

\smallskip
\noindent
\leftline{\sc 2. Navier-Stokes equation with shear thickening viscosity}

Let $G \subset \mathbb{R}^2$ be a bounded domain with a smooth boundary. Let $\bv = \bv(x,t)$
denote the velocity and $p= p(x,t)$, the pressure. Consider the following 2D Navier-Stokes equations with shear-thickening viscosity, namely
\begin{equation}\label{NSnlvE}
\frac{\partial\bv}{\partial t} - \nu_0 \Delta \bv - \nu_1 \nabla\cdot (|\nabla \bv|^{q-2}\nabla \bv) + (\bv \cdot)\bv = - \nabla p 
\end{equation}
with divergence-free condition $\nabla \cdot \bv = 0$,
and with no-slip boundary and initial condition as below:
\[
\bv(x,t) = 0 \quad \text{on}\quad \partial G; \quad \bv(x, 0) = \bv_0(x) \quad \text{on}\quad G.\]
Here $\nu_i > 0$ and $q\geq 2$. In addition to the spaces introduced above in Example 1,  define $V_{1, q} := \{\bv \in W^{1, q}_0(G): \nabla \cdot \bv= 0\}$ with $V^{\prime}_{1, q}$ as its dual. 
Projecting \eqref{NSnlvE} to $H$ as before, we have the following abstract evolution equation:
\begin{equation}\label{NSnlvE2}
\frac{\partial\bv(t)}{\partial t}
+ (\nu_0\bA{\bv} + \nu_1\bA_1(\bv) + \bB(\bv)) = 0
\end{equation}
with initial condition $\bv(0) = \bv_0$ where $\bA$ and $\bB$ are as before, and the nonlinear viscosity operator $\bA_1$ defined by
$\bA_1: V_{1, q} \to V^{\prime}_{1, q}$ by $\bA_1(\bv) = - \Pi_H\nabla \cdot (|\nabla \bv|^{q-2}\nabla \bv)$. 
When perturbed by a small jump noise, the coefficients of \eqref{NSnlvE2} satisfy the local monotonicity hypothesis and Hypotheses $\bf{H}$ for a suitable noise coefficient as in Example 1. A similar setup can be used for Navier-Stokes equations with other types of nonlinear viscosities \cite{Lad}.

\smallskip
\noindent
\leftline {\sc 3.  Shell model of turbulence:} 

Consider the system of equations given by
\begin{equation}
\frac{du_n}{dt} + \rho k_n^2 u_n + i(a k_{n+1}u_{n+1}^*u_{n+2}^* + b k_n u_{n-1}^* u_{n+1}^* + ck_{n-1}u_{n-1}^* u_{n-2}^*) = f_n 
\end{equation}
for $ n = 1, 2, \cdots $, and $u_n \in \mathbb{C}$ with $u_n^*$ being the complex conjugate of $u_n$. We have  the boundary conditions $u_{-1} = u_0 = 0$. 
Here, $\rho$ is a positive constant that denotes kinematic viscosity, and $a, b, c$ are real valued parameters such that conservation of energy holds:
$a + b + c = 0$. For the standard model, $a = -1, b = \frac{1}{2},$  and $c = \frac{1}{2}$.  The above equation pertains to the shell model due to Gledger, Ohkitani and Yamada \cite{OY}, and is known as the GOY model.  

\par
The model arises when the Navier-Stokes equation is written in the Fourier space where the interaction between different modes is preserved between the nearest modes. 
To be precise, the GOY model describes a one-dimensional cascade of energies among an infinite sequence of complex velocities $\{u_n(t)\}$  on a one-dimensional sequence of wave 
numbers, 
\[
k_n = k_0 2^n, \;\;k_0 >0,\;\;n = 1, 2, \cdots
\]
where the discrete index $n$ is referred to as the "shell index" (see \cite{Fr}).  The numbers $k_n$ may also be taken as $k_n = k_0\l^n$ for $\l >1$.

For the functional analytic setup, define   the real Hilbert space $H$ by 
\begin{equation*}
H :=\Big\{u=(u_1, u_2, \ldots) \in \mathbb{C}^{\infty}: \sum_{n=1}^{\infty} |u_n|^2 < \infty\Big\}.
\end{equation*}
For every $u, v \in H$, the scalar product $(\cdot,\cdot)$ and norm $|\cdot|$ are defined on $H$ as
\[(u, v)_H = Re\ \sum_{n=1}^{\infty} u_n v\s_n, \quad |u| = \big(\sum_{n=1}^{\infty} |u_n|^2\big)^{1/2}.\]
Let us now define the space
$V :=\Big\{u\in H: \sum_{n=1}^{\infty} k_n^2|u_n|^2 < \infty\Big\},$
which is a Hilbert space equipped with the norm
$$\|u\| = \big(\sum_{n=1}^{\infty} k_n^2|u_n|^2\big)^{1/2}.$$
The linear operator $A: D(A) \rightarrow H$ is a positive definite, self adjoint linear operator defined by
\begin{equation}\label{A}
Au=((Au)_1, (Au)_2, \ldots), \ \text{where}\ (Au)_n = k_n^2 u_n, \quad\forall \,\,u\in D(A).
\end{equation}
The domain of $A$ denoted by $D(A) \subset H$, is a Hilbert space equipped with the norm
$$\|u\|_{D(A)} = |Au| = \big(\sum_{n=1}^{\infty} k_n^4|u_n|^2\big)^{1/2}, \quad\forall\,\, u\in D(A).$$
Since the operator $A$ is positive definite, we can define the power $A^{1/2}$ ,
$$A^{1/2}u = (k_1 u_1, k_2 u_2, \ldots), \quad\forall \,\,u=(u_1, u_2, \ldots).$$
Furthermore, we define the space
$D(A^{1/2}) = \Big\{u=(u_1, u_2, \ldots): \sum_{n=1}^{\infty} k_n^2 |u_n|^2 < \infty\Big\}$
which is a Hilbert space equipped with the scalar product
$$ (u, v)_{D(A^{1/2})} = (A^{1/2}u, A^{1/2}v), \quad\forall\,\, u, v\in D(A^{1/2}),$$
and the norm $\|u\|_{D(A^{1/2})} = \big(\sum_{n=1}^{\infty} k_n^2 |u_n|^2\big)^{1/2}.$
Note that $V = D(A^{1/2})$. We consider $V^{\prime} = D(A^{-1/2})$ as the dual space of $V$. Then the following dense, continuous inclusion holds
$$V\subset_{\to} H = H^{\prime}\subset_{\to} V^{\prime}$$
with the inclusion of $V$ in $H$ being compact.
We will now introduce the sequence spaces analogue to Sobolev functional spaces.
For $1\leq p <\infty$ and $s\in\mathbb{R}$
$$\mathrm{W}^{s, p} :=\Big\{u = (u_1, u_2, \ldots): \|A^{s/2}u\|_p = \big(\sum_{n=1}^{\infty}(k_n^s|u_n|)^p\big)^{1/p} < \infty\Big\},$$
and for $p=\infty$,
$\mathrm{W}^{s, \infty} :=\Big\{u = (u_1, u_2, \ldots): \|A^{s/2}u\|_{\infty} = \sup_{1\leq n<\infty} (k_n^s|u_n|) < \infty\Big\},$
where for $u\in\mathrm{W}^{s, p}$ the norm is defined as
$\|u\|_{\mathrm{W}^{s, p}} = \|A^{s/2}u\|_p.$
Here $\|\cdot\|$ denotes the usual norm in the $l^p$ sequence space. It is clear from the above definitions that $W^{1, 2} = V = D(A^{1/2})$.

It is well-known that $
  \| u \|_{l^4}^4 \leq C |u|^2 \ \|u\|^2.$
We define the bilinear operator $B(\cdot, \cdot): V \times H\rightarrow H$ as
$$B(u, v) = (B_1(u,v), B_2(u, v), \ldots),$$ where
\begin{align*}
B_n(u, v)= ik_n\big(\frac{1}{4}u\s_{n+1} v\s_{n-1} -
\frac{1}{2}(u\s_{n+1} v\s_{n+2} + u\s_{n+2} v\s_{n+1})
 + \frac{1}{8}u\s_{n-1} v\s_{n-2}\big).
\end{align*}
If $\{e_n\}_{n=1}^{\infty}$ is the standard orthonormal
basis of $H$, i.e. all the entries of $e_n$ are zero except for the
 $n$th co\"ordinate which is equal to $1$, then
\begin{align}\label{B}
B(u, v)= i\sum_{n=1}^{\infty}k_n\big(\frac{1}{4}u\s_{n+1} v\s_{n-1}
- \frac{1}{2}(u\s_{n+1} v\s_{n+2} + u\s_{n+2} v\s_{n+1})
 + \frac{1}{8}u\s_{n-1} v\s_{n-2}\big)e_n.
\end{align}

 It follows from the following lemma that $B(u, v)$ makes
sense as an element of $H$, whenever $u\in V$ and $v\in H$ or $u\in
H$ and $v\in V$. In addition,  $B(u, v)$ makes sense as an
element of $V^{\prime}$. Here we state the following lemma which has
been proved in Constantin, Levant and Titi \cite{CLT} for the Sabra
shell model, but one can also prove  similar estimates for the
GOY model (see Barbato, Barsanti, Bessaih, and Flandoli\cite{BBBF}).
\begin{lemma}\label{lem1}
(i) There exist constants $C_1 >0, C_2 >0$,
\begin{equation}
|B(u, v)| \leq C_1 \|u\| |v|, \quad\forall u\in V, v\in H,
\end{equation}
and
\begin{equation}
|B(u, v)| \leq C_2 |u| \|v\|, \quad\forall u\in H, v\in V.
\end{equation}
(ii) $B: H\times H\rightarrow V^{\prime}$ is a bounded bilinear operator and for a constant $C_3 > 0$
\begin{equation}
\|B(u, v)\|_{V^{\prime}} \leq C_3 |u| |v|, \quad\forall u, v\in H.
\end{equation}
(iii) $B: H\times D(A)\rightarrow V$ is a bounded bilinear operator and for a constant $C_4 > 0$
\begin{equation}
\|B(u, v)\|_{V} \leq C_4 |u| |Av|, \quad\forall u\in H, v\in D(A).
\end{equation}
(iv) For every $u\in V$ and $v\in H$
\begin{equation}\label{1}
(B(u, v), v) = 0.
\end{equation}
\end{lemma}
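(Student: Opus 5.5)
We prove the four parts of Lemma \ref{lem1} by direct estimates on the components $B_n(u,v)$, using only the Cauchy--Schwarz inequality and the geometric structure $k_n = k_0 2^n$, i.e. $k_{n\pm j} = 2^{\pm j} k_n$. Writing
\[
B_n(u,v) = i k_n\Big(\tfrac14 u\s_{n+1}v\s_{n-1} - \tfrac12(u\s_{n+1}v\s_{n+2}+u\s_{n+2}v\s_{n+1}) + \tfrac18 u\s_{n-1}v\s_{n-2}\Big),
\]
each term is a product of one component of $u$ and one of $v$ at shifted indices, multiplied by $k_n$.

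\emph{Parts (i) and (ii).} For (i), first case, we attach the factor $k_n$ to the $u$-component: for instance $k_n|u_{n+1}||v_{n-1}| = \tfrac12 k_{n+1}|u_{n+1}||v_{n-1}| \le \tfrac12 \|u\|\,\sup_m|v_m|$. Squaring and summing, we get $\sum_n k_n^2|u_{n+1}|^2|v_{n-1}|^2 \le \tfrac14 \sup_m |v_m|^2 \sum_n k_{n+1}^2|u_{n+1}|^2 \le \tfrac14 |v|^2\|u\|^2$, since $\sup_m|v_m|\le|v|$. The other three terms are handled identically, the constant being a power of $2$ coming from the index shift; the finite sum of these constants gives $C_1$. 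The second case of (i) is symmetric, with $k_n$ attached to $v$ instead. For (ii) we argue by duality: for $w\in V$,
\[
|(B(u,v),w)| \le \sum_n k_n|w_n|\,(\text{products of } u,v) \le \|w\|\,\Big(\sum_n |u_{n+j}|^2|v_{n+l}|^2\Big)^{1/2} \le \|w\|\,|u|\,|v|,
\]
using $\sum_n a_n^2 b_n^2 \le (\sup_n a_n^2)\sum_n b_n^2$.

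\emph{Part (iii).} We need $\sum_n k_n^4 |B_n|^2/k_n^2 \lesssim |u|^2|Av|^2$, i.e. the bound $\sum_n k_n^4|u_{n+j}|^2|v_{n+l}|^2 \le C\,|u|^2\sum_m k_m^4|v_m|^2$. Moving $k_n^2$ onto $v_{n+l}$ costs only a factor $2^{-2l}$, and then $\sup_n|u_n|\le|u|$ gives the estimate.

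\emph{Part (iv).} This is the only step requiring care. We compute
\[
(B(u,v),v) = \mathrm{Re}\sum_n B_n(u,v)\,v\s_n,
\]
reindex each of the four sums so that they all involve the same triple products $u\s_{m+1}v\s_m v\s_{m+2}$ (respectively $u\s_{m+2}v\s_{m}v\s_{m+1}$, $u\s_m v\s_{m+1}v\s_{m+2}$), and check that the coefficients multiplied by the corresponding $k$'s cancel. For example, after shifting, the terms containing $u\s_{m+1}$ carry the coefficients $k_{m+2}/4$ and $-k_m/2$, and these must combine with the conjugate contributions so that the real part vanishes. This cancellation is exactly where the energy-conservation relation $a+b+c=0$, rewritten with $k_{n\pm1}=2^{\pm1}k_n$, enters. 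For Sabra it is the classical computation in \cite{CLT}; for GOY the analogous check is in \cite{BBBF}. The main obstacle is this bookkeeping of index shifts together with the real parts of the factors of $i$. We would verify the identity first on finitely supported sequences, where all sums are finite and reindexing is trivial, and then extend to $u\in V$, $v\in H$ by density, using the continuity of the trilinear form provided by (i).
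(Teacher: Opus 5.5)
Your estimates for (i)--(iii) are correct. They are the standard componentwise argument using $k_{n+j}=2^jk_n$ and $\sup_m|v_m|\le|v|$. The paper itself gives no proof of this lemma; it only cites \cite{CLT} and \cite{BBBF}.

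The gap is in (iv), the only part with real content. You do not carry out the cancellation, and the sample you give is wrong. The terms of $\sum_n B_n(u,v)v^*_n$ containing $u^*_{m+1}$ come from two sources:
the term $-\tfrac12k_nu^*_{n+1}v^*_{n+2}v^*_n$ at $n=m$, and the term $\tfrac18k_nu^*_{n-1}v^*_{n-2}v^*_n$ at $n=m+2$. Their coefficients are therefore $-\tfrac12k_m$ and $\tfrac18k_{m+2}$, not $\tfrac14k_{m+2}$. With your numbers they would add up to $\tfrac12k_m\neq0$.

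Nor can ``conjugate contributions'' or taking real parts rescue a nonzero coefficient. Every term of the sum is $i$ times a real number times a product of conjugated components only. Since the phases of $u$ and $v$ are free, $(B(u,v),v)=0$ for all $u,v$ forces each coefficient to vanish. For instance, $u=\alpha e_{m+2}$, $v=e_m+e_{m+1}$ isolates a single triple product. Also, the type $u^*_mv^*_{m+1}v^*_{m+2}$ you list never occurs, because $u$ never sits at the lowest index.

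The correct bookkeeping is short. For $u\in V$, $v\in H$ all four series converge absolutely; for example,
$\sum_nk_n|u_{n+1}||v_{n-1}||v_n|\le\tfrac12\|u\|\,|v|^2$.
So you may reindex directly, and the density argument is unnecessary. The boundary terms vanish since $u_0=u_{-1}=v_0=v_{-1}=0$, and one gets
\[
\sum_{n\ge1}B_n(u,v)\,v^*_n
= i\sum_{m\ge1}\Big[\big(\tfrac14k_{m+1}-\tfrac12k_m\big)u^*_{m+2}v^*_{m+1}v^*_m
+\big(\tfrac18k_{m+2}-\tfrac12k_m\big)u^*_{m+1}v^*_{m+2}v^*_m\Big]=0 .
\]
The terms are matched as follows:
\begin{itemize}
\item the first bracket pairs the $\tfrac14$-term at $n=m+1$ with $-\tfrac12u^*_{n+2}v^*_{n+1}$ at $n=m$;
\item the second bracket pairs $-\tfrac12u^*_{n+1}v^*_{n+2}$ at $n=m$ with the $\tfrac18$-term at $n=m+2$.
\end{itemize}
Both brackets vanish because $k_{m+1}=2k_m$ and $k_{m+2}=4k_m$. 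Thus even the complex sum is zero, and a fortiori its real part $(B(u,v),v)$.

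What makes this work is the specific asymmetric placement of $u$ and $v$ in $B_n$. The symmetrized middle term is split into two halves, each cancelling one of the other two terms. The relation $a+b+c=0$ by itself only yields $(B(u,u),u)=0$.
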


With above functional setting, we introduce  the stochastic shell equation for the GOY model  on the  Hilbert space $H$ by the equation
\begin{align}\label{goy1}
& du + \big[\rho Au + B(u, u)\big] dt = f(t)dt +  \int_{\mathbb Z} g(t, \bu(t-), z) \tilde{N}^{1/\e}(dt, dz)\\
& u(0) = u_0 \notag,
\end{align}
where $u=(u_1, u_2,\ldots)\in H$, and the operators $A$ and $B$ are
defined through \eqref{A} and \eqref{B} respectively. Local monotonicity condition is easy to check. Indeed, by bilinearity of $B$, 
we obtain 
$
B(u) - B(v) = B(v, w) + B(w, v) + B(w, w)$, with $w = u - v$. Hence, 
\[
\langle B(u) - B(v), w \rangle  = \langle B(w, v), w \rangle\,\,\text{since}\,\,B(v, w, w) = B(w, w, w) = 0.
\]
Using the definition of $B$ and then applying the H\"older inequality, we obtain
\begin{align*}
|\langle B(u) - B(v), w \rangle |& = |\langle B(w, v), w \rangle |\\
& \le \norm{w}_{l^4} \norm{w} \,\norm{v}_{l^4}\\
&\le  |w|^{1/2}\norm{w}^{3/2}\,\norm{v}_{l^4} \,\,\text{since}\,\,\norm{w}_{l^4} \le |w|^{1/2}\norm{w}^{1/2}; \,\text{continuing},\\
&\le \frac{\norm{w}^2}{2} + \frac{27}{32} |w|^2\,\norm{v}_{l^4}^4
\end{align*}
by Young's inequality, as before. The rest of the proof of local monotonicity follows along lines similar to that of Proposition \ref{prop1}. The rest of the hypotheses in ${\bf H}$ are easily verified from  Lemma \ref{lem1} and basic considerations. 

\medskip
\noindent
\leftline{{\sc 4.  MHD System}}

Suppose that $G \subset \mathbb{R}^2$  is  a  bounded,
open, simply connected  domain with a smooth boundary $\partial G$.
The stochastic MHD system (see Sermange and Temam \cite{SerTem})  in the
non-dimensional form is given by
\begin{equation}
\frac{\partial v}{ \partial t} + (v\cdot\nabla)v -\frac {1}{R_e} \Delta v - S(B\cdot \nabla)B
 + \nabla(p +\frac {S|B|^2}{ 2}) = 0,\label {mhd1}
\end{equation}
where $v$ denotes  the velocity,  $B$,  the magnetic field,  $p$, the pressure field, and
 $R_e$, the Reynolds number. Let $ S:= \frac{M^2} {R_e R_m}$,
where $M$ is the Hartman number and $R_m$ is the magnetic Reynolds number.
The Maxwell equation is
\begin{equation}
\frac{\partial B}{ \partial t} + (v\cdot \nabla)B + \frac{1}{R_m}{\rm curl}({\rm curl}\,\, B)
 - (B\cdot \nabla)v = 0,\label{mhd2}
\end{equation}
where $\nabla\cdot v = 0$ and $\nabla\cdot B = 0$.

For a vector-valued function $u = (u_1, u_2)$ defined on $G$, 
$
 \text{curl}\;u =  \frac{\p u_2}{\p x} - \frac{\p u_1}{\p  y}.
$
If $\varphi$ is a scalar-valued function defined on $G$, 
$
  \text{curl}\;\varphi =  \bigl(\frac{\p \varphi}{\p  y},  - \frac{\p \varphi}{\p x}\bigr).
$

Equations (\ref {mhd1}), (\ref {mhd2}) are equipped
with the following boundary conditions:
$ v = 0$ on $\partial G$ ; $B\cdot {\bf n} = 0$ and $ {\rm curl}\,\, B = 0$ on $\partial G$,
where ${\bf n} $ is the unit outer normal on $\partial G$. The initial conditions are: 

$v(x,0) = v_0(x)$  and
$B(x,0) = B_0(x)$ for $ x\in G$.

The system needs to be written as an abstract evolution equation
which in the integral form reads is given by
\begin{equation}
\by(t) = \by(0) + \int_0^t [-\bA \by(s) - \bB(\by(s))]\,ds,\label {mhde1}
\end{equation}
where $\by$ denotes the transpose of $(v, B)$. As in Sermange and
Temam \cite{SerTem}, we will begin with the choice of function
spaces.

Set $H = H_1\t H_2 \quad {\rm and} \quad V = V_1\t V_2$, where
\begin{eqnarray*}
H_1 &=& \{\varphi\in L^2(G); \nabla\cdot \varphi = 0,\,\, \varphi\cdot {\bf n}|_{\p G} = 0\}\\
H_2 &=& H_1\\
V_1 &=& \{\varphi \in H_0^1(G): \n \cdot \varphi = 0\}\\
V_2 &=& \{\varphi \in H^1(G): \n \cdot \varphi = 0,\,\, \varphi\cdot {\bf n}|_{\p G} = 0\}
\end{eqnarray*}
where
\begin{eqnarray*}
H^1 &=& \{u\in L^2(G): \nabla u\in L^2(G)\}\\
H_0^1 &=&\{u\in L^2(G): \nabla u\in L^2(G)\,\, {\rm and}\,\, u|_{\p G} = 0\}
\end{eqnarray*}
$H_1$ and $H_2$ are equipped with the $L^2(G)$ norm. Define the  inner
product on $H$ by 

$ [\by_1, \by_2] = (v_1, v_2)_{H_1} + S(B_1, B_2)_{H_2}\,\,\text{where}\,\, \by_i = \{v_i, B_i\}$. 
The above inner product is equivalent to $(\by_1, \by_2)_H = (v_1,
v_2)_{H_1} + (B_1, B_2)_{H_2}$.
 The norm on $H$ is given by $||\by||_H = \sqrt {(y, y)_H}$.
The space $V_1$ is endowed with the inner product
$[| \varphi, \psi |]_{V_1} = (\nabla \varphi, \nabla \psi)_{L^2(G)}$.
The norm on $V_1$ is given by $||\varphi||_{V_1} = \sqrt {[| \varphi, \varphi |]_{V_1}}$.
Note that the $V_1$ norm given here is equivalent to the usual $H^1(G)$ norm since
$||\nabla \varphi||_{L^2(G)} \ge C||\varphi||_{L^2(G)}$ for a suitable $C$ by the Poincar\'e inequality.

\sni Let $V_2$ be  endowed  with the inner product $[| \varphi, \psi
|]_{V_2} = ({\rm curl}\,\varphi, {\rm curl}\,\psi)_{L^2(G)}$  so that the norm
on $V_2$ is given by $||\varphi||_{V_2} = \sqrt {[| \varphi, \varphi
|]_{V_2}}$. From Proposition 1.8 and Lemma 1.6 of \cite {Tem}, we
can conclude that the norm given by $\{||\varphi||^2_{L^2(G)} + ||{\rm curl}\,
\varphi||^2_{L^2(G)} + ||\varphi\cdot {\bf n}||^2_{H^{1/2}(\p G)}\}^{1/2}$
is equivalent to the $H^1(G)$ norm.Thus on $V_2$, the norm defined
above is equivalent to the $H^1(G)$ norm. We thus endow the space
$V$ with the scalar product:
\[[| \by_1, \by_2 |] =  [| v_1, v_2
|]_{V_1} + S[| B_1, B_2 |]_{V_2}.\]
Having defined the spaces $H$ and $V$, we have the dense, continuous and compact embedding:

$V \hookrightarrow H = H^{'}\hookrightarrow V^{'}$.

The next step consists in defining the operators $\bA$ and $\bB$ that appear in (\ref {mhde1}).
$\bA$ is defined through a bilinear coercive form, and $\bB$ by means of a trilinear continuous form.

Define a function $a: V\times V \to {\bf R}$ as follows:
\begin{equation}
a(\by_1, \by_2) = \frac{1}{R_e}[| v_1, v_2|]_{V_1} +
\frac{S}{R_m}[|B_1, B_2|]_{V_2}\label {a}
\end{equation}
The function $a$  defined by (\ref {a}) is continuous and coercive.
 By the Lax-Milgram lemma, there exists an operator $\bA: V \to
V^{'}$ such that $a(\by, \bz) = <\bA \by,\bz>_{V^{'} \t V}$ for all
$\by, \bz \in V$.

\sni
Therefore  $\bA : V \to V^{'}$ can be restricted to a self-adjoint operator
$\bA : {\mathcal D}(\bA) \to H$. 

\sni
We can write
${\mathcal D}(\bA) = {\mathcal D}({\bA}_1)\t {\mathcal D}({\bA}_2)$ where $\bA_1\,\,, \bA_2\,\,,
{\mathcal D}({\bA}_1),\,\, \text{and}\,\,{\mathcal D}({\bA}_2)$
 are obtained as follows:

Consider the ``Stokes" problem in $G$:
\[
-\frac{1}{R_e}\Delta v + \n p = g\quad{\rm  with}\quad \n .v = 0,\,\,\,\, {\rm and}\,\,\,\, v|_{\p G} = 0.
\]
This can be written as $\frac{1}{R_e}[|v, \varphi|]_{V_1} = (g, \varphi)$ for $\varphi \in V_1$.

\noindent
Define $\bA_1 v = \frac{1}{R_e}[| v, \cdot |]_{V_1} = g
\in  V_1^{'}$.
By  the Cattabriga regularity theorem,
$  {\mathcal D}({\bA}_1) = H^2(G) \cap V_1$ whenever $g\in H_1$.

\sni
To define $\bA_2$, consider the elliptic problem in $G$:
\[
\frac{1}{R_m} {\rm curl}\,\,{\rm curl}\, B = g\quad{\rm  with}\quad \n\cdot B = 0;\,\, B\cdot  {\bf n}|_{\p G} = 0;
 \,\,{\rm curl}\, B|_{\p G} = 0.
\]
so that $\bA_2 B =\frac {1}{R_m} {\rm curl}\,\,{\rm curl}\, B = g$.
It is shown in \cite {SerTem} that if $g\in H$, then
\[{\mathcal D}({\bA}_2) = \{ u\in H^2(G): \n \cdot u = 0, u \cdot {\bf n}|_{\p G} = 0 \,\,{\rm and}\,\,
{\rm curl}\, u |_{\p G} = 0\}.\]
The next step consists in defining $\bB$ that figures in (\ref {mhde1}). 

Consider the trilinear
form
$
b(\cdot, \cdot, \cdot)\colon V\t V\t V \to {\bf R}$ by
\begin{equation}
 b(\by_1, \by_2, \by_3) = {\mathaccent 126 b}(v_1, v_2, v_3) - S{\mathaccent 126 b} (B_1, B_2, v_3) + S{\mathaccent 126 b} (v_1, B_2, B_3)
 - S{\mathaccent 126 b} (B_1, v_2, B_3)\label {b}
\end{equation}
 for all $\by_i = (v_i, B_i) \in V$ where ${\mathaccent 126 b} (\cdot, \cdot, \cdot)\colon (H^1(G))^{\otimes 3} \to {\bf R}$ is defined by
\[ {\mathaccent 126 b} (\varphi, \psi, \theta) = \displaystyle{\sum_{i,j = 1}^2 \int_G {\varphi_i\frac {\p \psi_j}{ \p x_i} \theta_j}\,dx}.\]
The function $b$ defined by (\ref {b}) is continuous.
Note that for all $\varphi\in H^1(G)$ and $\psi \in V_1$,
\begin{equation*}
 {\mathaccent 126 b} (\varphi, \psi, \psi) = {\displaystyle{\sum_{i,j} \,\int_G \, \varphi_i\,\frac{\p \psi_j}{ \p x_i}\, \psi_j}\,dx}
=  0
\end{equation*}
by integration by parts.
Also, ${\mathaccent 126 b} (\varphi, \psi, \theta) = - {\mathaccent 126 b} (\varphi, \theta, \psi)$ if $\psi$ or $\theta \in V_1$ and
 $\varphi \in H^1(G)$.
Therefore $b(\by, \bz, \bz) = 0$.

Define $\bB\colon V\t V \to V^{'}$ as the continuous bilinear
operator such that $ b(\by_1, \by_2, \by_3) = < \bB(\by_1, \by_2),\,
\by_3
>$ for all $\by_1, \by_2, \by_3 \in V$. The
 existence of such  an operator is guaranteed by the Riesz representation theorem.
With $\bB(\by)$ denoting $\bB(\by, \by)$, the  stochastic MHD system  appears as follows:
\begin{equation}
 d\by + [\bA\,\by + \bB (\by)]\,dt = f(t)dt +  \int_{\mathbb Z} g(\by(t-), z) \tilde{N}^{1/\e}(dt, dz).\label{sde}
\end{equation}
with $\by(0) \in H$. The hypotheses ${\bf H}$ follow from the above description, and is akin to the two-dimensional Navier-Stokes system with added complexities that arise from the coupling with the Maxwell equation. 

Physically, the two-dimensional MHD system pertains to the case when the domain is a cylinder $G\times \R$ with $G$ as a 
domain in $\R^2$, and  all the quantities are independent of $z$. Thus $v$ and $B$ are parallel to the $xy$-plane.

\medskip
\noindent
\leftline{{\sc 5.  Two-dimensional Tidal Equation}}
Let $G$ be an open domain in $\R^2$ with a smooth boundary $\p G$, and fix any $T > 0$. 
Consider the stochastic system:
\begin{align}
\frac{\p u}{\p t} +\bA u + \bB (u) + \nabla z &= f(t) + \Gamma(t, u) \,\,\text{in}\,\,[0, T] \times G \label{eqn1.1}\\
 \,\,\frac{\p z}{\p t} + \nabla \cdot (hu) &=0  \,\,\text{in}\,\, [0, T]\ 
\times G \label{eqn1.2}
\end{align}
with the initial values $u(0, x) = u_0(x)$ and $z(0, x) = z_0(x)$ are specified. The boundary condition $u(t, x) = 0$ is satisfied for all $ t \in [0, T]$ and $x \in \p G$.

The coefficients $\bA$ and $\bB$ are given by 
\begin{equation}
\bA = \begin{pmatrix} -a\Delta & -\beta\\ \beta & -a \Delta\end{pmatrix}
\end{equation}
and 
\begin{equation}
\bB(u) = \beta |u + w^0(t, x)|(u + w^0)
\end{equation}
where $w^0(t, x)$ is a known random function, and $\beta(x) = \frac{r}{h(x)}$ with $r > 0$. The function $h$ is continuously differentiable such that 
\begin{equation}
\k = \min_G h(x), \mu_1 = \max_G h(x), \mu_2 = \max_G |\nabla h(x)|
\end{equation}
where $\k, \mu_1, \mu_2$ are positive constants. The function $f$ is an additive forcing term. 
The noise term on the right side of \eqref{eqn1.1} is given by 
\[
\Gamma(t, u) = g(t, u, z) \tilde{N}(dt, dz)
\]
where  $g$ satisfies the properties listed for the noise coefficient in hypotheses $\bf H$. $\tilde N$ is a compensated Poisson random measure with compensator  given by $\nu(dz) dt$ for a $\sigma$-finite Borel measure $\l$ defined on $\Xi \in {\mathcal B}(L^2(G))$.  We will call $L^2(G)$ as $H$. 
The $H$-norm is denoted by $| \cdot |$, and the inner product by $(\cdot , \cdot )$.  We will denote the space $H_0^1(G)$  by $V$. The $V$-norm and inner product are denoted by $\norm{\cdot}$ and 
$(( \cdot ))$. Let $V\pr := \mathbb{H}^{-1} (G)$ denote the dual of $V$, so that we have the dense, continuous embedding 
\[
 V \subset_\to H \subset_{\to} V\pr.
\]
The dual pairing between $V\pr$ and $V$ is denoted by $\langle \cdot , \cdot \rangle$.

Consider the nonlinear operator $F$ on $H_0^1(G)$ defined by $F(u) :=  \bA u + \bB (u) - f$. Let $u, v \in V$ and $w:= u- v$.  Then, it is simple to check that for any $\e$ satisfying 
$0 < \e < \frac{\a \k}{2L}$, and $\rho := \displaystyle{\frac{\a (\mu_2^2 + \k^2)}{2\k}}$, we have 
\begin{equation}
\langle F(u) - F(v), hw \rangle - - \rho |w|_{L^2}^2 + \e \int_{Z} |g(t, u, z) - g(t, v, z)|^2 \nu(dz)  \le 0. \notag
\end{equation}
It is easy to verify that the coefficients satisfy hypotheses $\bf H$. 

\section{Weak Convergence Approach to LDP and MDP}

We summarize the weak convergence approach to LDP  as developed by Budhiraja and Dupuis (see \cite{BuDu19}). For a locally compact Polish space $\mathbb{Z}$,  let $\mathcal{M}_{FC}(%
\mathbb{Z})$ be the space of all (non-negative) Radon measures $\nu $ on $(\mathbb{Z},\mathcal{B}(%
\mathbb{Z}))$ (i.e., $\nu (K)<\infty $ for every compact $K\subset 
\mathbb{Z}\times [0,T]$). Endow $\mathcal{M}_{FC}(\mathbb{Z}\times [0,T])$ with the usual vague
topology, which makes $\mathcal{M}_{FC}(\mathbb{Z}\times [0,T])$ a Polish space \cite{BDM09}. 

 For $M>0$, define $S^M := \{\phi: \mathbb{Z} \times [0,T] \to [0, \infty): L_T(\phi) \le M\}$. A function $\phi \in S^M$ can be identified with a measure $\nu_T^{\phi} \in \mathcal{M}_{FC}(\mathbb{Z}\times [0,T])$, defined by
\[
 \nu_T^{\phi}(B) = \int_B \phi(s, x) \nu(dx) ds, \;\;\forall \;\;B \in \mathcal{B}(\mathbb{Z} \times [0,T]). 
\]
Thus the usual topology of weak convergence of measures induces a topology on $S^M$ through this identification  under which $S^{M}$ is a compact space.  Notice that $\mathbb{S} = \cup_{M \ge 1}S^M$ (c.f. \eqref{eq:defS}).

Now let $\mathcal{P}$ be the predictable $\sigma$-field on $ [0,T] \times \Omega $, that is, the $\sigma$-field generated by left-continuous adapted processes. Let $\bar{\mathcal{A}}, \bar{\mathcal{A}}_{+}$ respectively denote the set of maps (viewed as ``controls")  $\vphi: \mathbb{Z}\times  [0,T] \times \Omega  \to \R$ and $\vphi: \mathbb{Z}\times  [0,T] \times \Omega  \to [0,\infty)$ which are $\mathcal{B}(\mathbb Z)\times \mathcal{P} / \mathcal B([0,\infty))$-measurable. Let $\bar{\mathcal{A}}_{b,+} \subset \bar{\mathcal{A}}_{+}$ consists of those $\vphi \in \bar{\mathcal{A}}_{+}$  which are bounded and identically 1 outside a compact set. Specifically, let $\left\{
K_{n}\subset \mathbb{Z},n=1,2,\ldots \right\} $ be an increasing sequence of
compact sets such that $\cup _{n=1}^{\infty }K_{n}=\mathbb{Z}$, and define $\bar{\mathcal{A}}_{b,+} = \cup_{n=1}^\infty \bar{\mathcal{A}}_{b,n,+}$ where 
\begin{align*}
\bar{\mathcal{A}}_{b,n,+}& \doteq \left\{ \varphi \in \bar{\mathcal{A}}_{+}:%
\text{ for all }(z,t,\omega )\in \mathbb{Z}\times  [0,T] \times \Omega\text{, }%
n\geq \varphi (z,t,\omega )\geq 1/n\right. \\
& \hspace{1in}\left. \text{ if }z\in K_{n}\text{ and }\varphi (z,t,\omega )=1%
\text{ if }z\in K_{n}^{c}\right\},
\end{align*}%

For a (random) control $\vphi \in \bar{\mathcal{A}}_{b,+}$ and $\theta>0$, let $N^{\theta \vphi}$ be a ``controlled'' Poisson measure with intensity measure  $\theta \vphi(z,t, \omega) \nu(dz) dt,$ and we assume $(\Omega, \mathcal{F}, \{\mathcal{F}_t\}, \mathbb{P})$ supports all these processes. Mathematically, we can construct any $N^{\theta \vphi}$ on  $(\Omega, \mathcal{F}, \{\mathcal{F}_t\},  \mathbb{P})$ as follows: let $\bar N$ be a PRM on $\mathbb Z \times [0,\infty)\times [0,T]$ with mean measure $\nu\ot \leb\ot \leb$, and define the counting process $N^{\theta \vphi}$ on $\mathbb Z \times [0,T]$ by 
\begin{equation}
N^{\theta \varphi}( U \times (0,t])=\int_{U\times\lbrack0,\infty)\times [
0,t]}1_{[0,\theta\varphi(z,s)]}(r)\bar{N}(dx\,dr\,ds),\quad t\in\lbrack0,T],U\in 
\mathcal{B}(\mathbb{Z}).  \label{Eqn: control}
\end{equation}
Of course, $N^{\ep^{-1}}$ appearing in \eqref{epsiloneqn} corresponds to  \eqref{Eqn: control} with $\theta = \ep^{-1}$ and $\vphi \equiv 1$.



\paragraph{A General Large Deviation Result from \cite{BDM09}:}

For each $\ep>0$, let $\mathcal{G}^{\e}: \mathcal{M}_{FC}(\mathbb{Z}\times [0,T]) \to \mathbb{U}$ be a measurable map. We now state a general theorem for obtaining LDP of $\{Z^{\e} \equiv \mathcal{G}^{\e}(\ep N^{\ep^{-1}})\}$

\begin{condition} \label{LDPcond}
There exists a measurable map $\mathcal{G}^0: \mathbb{M} \to \mathbb{U}$ such that the following hold:

\begin{enumerate}
 \item[(a)] For any fixed $M \in \mathbb{N}$, let $\varphi_n, \varphi \in S^M$ such that $\varphi_n \to \varphi$ as $n \to \infty$. Then 
\[
\mathcal{G}^0(\nu_T^{\varphi_n}) \to \mathcal{G}^0(\nu_T^{\varphi}) \;\,\text{in}\;\,\mathbb{U}.
\]
\item[(b)] For any $M \in \mathbb{N}$, let $\varphi_{\e}, \varphi \in \mathcal{U}^M_+ \doteq \{\varphi \in \bar{\mathcal{A}}_{b,+}: \varphi(\o) \in S^M, \, \mathbb{P}\; a.s.\}$ such that $\varphi_{\e}$ converges in distribution to $\varphi$ as $\e \to 0$. Then 
\[
 \mathcal{G}^{\e}(\e N^{\e^{-1}\varphi_{\e}}) \implies \mathcal{G}^0(\nu_T^{\varphi}).
\]
\end{enumerate}
\end{condition}
For $\bar{\bu} \in \mathbb{U}$, define $\mathbb{S}^{\mathrm{LD}}_{\bar{\bu} } = \{\varphi \in \mathbb{S}: \bar{\bu}  = \mathcal{G}^0(\nu_T^{\varphi})\}$. With $L_T$ as in \eqref{Ltdef}, define $I^{\mathrm{LD}}: \mathbb{U} \to [0, \infty]$  by 
\begin{equation}
 I^{\mathrm{LD}}(\bar{\bu} ) = \inf_{\varphi \in \mathbb{S}^{\mathrm{LD}}_{\bar{\bu} }} L_T(\varphi), \;\;h \in \mathbb{U}.
\label{LDPrate}
\end{equation}
By convention, $I^{\mathrm{LD}}(\bar{\bu} ) = \infty$ if $\mathbb{S}^{\mathrm{LD}}_{\bar{\bu} } = \emptyset$.

\begin{theorem}\cite{BDM09}
Suppose that Condition \ref{LDPcond} holds. Then $I^{\mathrm{LD}}$ defined by 
\eqref{LDPrate} is a rate function on $\mathbb{U}$ and the family $\{Z^{\e} \equiv \mathcal{G}^{\e}(\ep N^{\ep^{-1}}) \}$ satisfies an LDP with rate function $I^{\mathrm{LD}}$.
\end{theorem}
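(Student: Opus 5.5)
The plan is to prove the Laplace principle \eqref{introineq1} with $b(\e)=\e$ and rate function $I^{\mathrm{LD}}$. For Polish $\mathbb U$ this is equivalent to the LDP, provided $I^{\mathrm{LD}}$ is shown to be a rate function. The tool is the variational representation for functionals of Poisson random measures from \cite{BDM09} (see also \cite{BuDu19}). For every bounded measurable $F:\mathbb U\to\R$,
\begin{equation*}
-\e\log\mathbb{E}\left[e^{-F(Z^{\e})/\e}\right]=\inf_{\varphi\in\bar{\mathcal{A}}_{b,+}}\mathbb{E}\left[L_T(\varphi)+F\big(\mathcal{G}^{\e}(\e N^{\e^{-1}\varphi})\big)\right].
\end{equation*}
This identity lets us read off both Laplace bounds from Condition \ref{LDPcond}.

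\emph{Step 1: $I^{\mathrm{LD}}$ is a good rate function.} For $M<\infty$, $L_T$ is lower semicontinuous with respect to the weak topology on $S^M$. Consequently
\[
\{I^{\mathrm{LD}}\le M\}=\bigcap_{n}\mathcal{G}^0\big(S^{M+1/n}\big).
\]
The key point is that the infimum defining $I^{\mathrm{LD}}$ is attained. If $\varphi_k\in\mathbb{S}^{\mathrm{LD}}_{\bar\bu}$ with $L_T(\varphi_k)\downarrow I^{\mathrm{LD}}(\bar\bu)$, compactness of $S^{M+1}$ gives a convergent subsequence $\varphi_k\to\varphi$. Condition \ref{LDPcond}(a) then gives $\mathcal{G}^0(\nu_T^{\varphi})=\bar\bu$, and lower semicontinuity gives $L_T(\varphi)\le I^{\mathrm{LD}}(\bar\bu)$. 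Hence the level set equals $\mathcal{G}^0(S^M)$. This is the continuous image of a compact set under (a), so it is compact.

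\emph{Step 2: Laplace upper bound}, i.e. $\liminf_{\e\to 0}\big(-\e\log\mathbb{E} e^{-F(Z^\e)/\e}\big)\ge\inf\{F+I^{\mathrm{LD}}\}$ for $F\in C_b(\mathbb U)$.
\begin{itemize}
\item Choose $\e$-optimal controls $\varphi_\e\in\bar{\mathcal{A}}_{b,+}$ in the representation. Since $F$ is bounded, $\sup_\e\mathbb{E}L_T(\varphi_\e)\le 2\|F\|_\infty+1$.
\item Localize to $S^M$. Stop the control at the first time $L_t(\varphi_\e)$ exceeds $M$, replacing $\varphi_\e$ by $1$ afterwards.
\item Bound the cost of this localization. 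By Chebyshev, the stopped control differs from $\varphi_\e$ only on an event of probability at most $C/M$, which changes the functional by at most $2\|F\|_\infty C/M$. So we may assume $\varphi_\e\in\mathcal{U}^M_+$.
\item Since $S^M$ is compact, $\{\varphi_\e\}$ is tight. Along a subsequence $\varphi_\e\Rightarrow\varphi$, and Condition \ref{LDPcond}(b) gives $\mathcal{G}^\e(\e N^{\e^{-1}\varphi_\e})\Rightarrow\mathcal{G}^0(\nu_T^\varphi)$.
\item Fatou's lemma and lower semicontinuity of $L_T$ then give the limit inferior at least $\mathbb{E}[L_T(\varphi)+F(\mathcal{G}^0(\nu_T^\varphi))]\ge\inf\{F+I^{\mathrm{LD}}\}$. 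Finally let $M\to\infty$.
\end{itemize}

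\emph{Step 3: Laplace lower bound.} Fix $\bar\bu$ with $F(\bar\bu)+I^{\mathrm{LD}}(\bar\bu)\le\inf\{F+I^{\mathrm{LD}}\}+\delta$, and let $\varphi$ attain $I^{\mathrm{LD}}(\bar\bu)$ as in Step 1. The representation only allows controls in $\bar{\mathcal{A}}_{b,+}$, so we approximate $\varphi$ by a deterministic $\varphi^{(n)}$ that takes values in $[1/n,n]$ on $K_n$ and equals $1$ off $K_n$. We need two properties:
\begin{itemize}
\item $\varphi^{(n)}\to\varphi$ in $S^{M}$ for fixed $M$;
\item $L_T(\varphi^{(n)})\to L_T(\varphi)$.
\end{itemize}
Both follow from dominated convergence using $\ell(r)\le\ell(\varphi)$ for $r$ between $1$ and $\varphi$. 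By (a), $F(\mathcal{G}^0(\nu_T^{\varphi^{(n)}}))\to F(\bar\bu)$. Plugging the constant-in-$\e$ control $\varphi^{(n)}$ into the representation and applying (b) with $\varphi_\e\equiv\varphi^{(n)}$ gives
\[
\limsup_{\e\to 0}\big(-\e\log\mathbb{E} e^{-F(Z^\e)/\e}\big)\le L_T(\varphi^{(n)})+F\big(\mathcal{G}^0(\nu_T^{\varphi^{(n)}})\big).
\]
Let $n\to\infty$ and then $\delta\to 0$.

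I expect the main obstacle to be the control bookkeeping rather than the limit arguments: justifying the truncation to $\mathcal{U}^M_+$ in Step 2, and constructing the approximating $\varphi^{(n)}$ in Step 3, while keeping the controls predictable and preserving $L_T$. The weak-limit parts are immediate from Condition \ref{LDPcond}.
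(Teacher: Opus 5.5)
Your proposal is correct and is essentially the standard weak-convergence proof of \cite{BDM09}, which the paper cites rather than reproves. It has the same three ingredients: compactness of level sets via compactness of $S^M$ and Condition \ref{LDPcond}(a); the Laplace upper bound from near-optimal controls localized to $\mathcal{U}^M_+$ by stopping, followed by tightness and (b); and the lower bound from a deterministic truncation of a minimizing control. One point to state explicitly: in Step 2 the weak limit $\varphi$ is only an $S^M$-valued random variable and need not lie in $\mathcal{U}^M_+$, so you are using (b) in the sense intended in \cite{BDM09}, where it applies to limits in distribution of elements of $\mathcal{U}^M_+$, rather than the paper's literal wording, which requires $\varphi \in \mathcal{U}^M_+$.
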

We are interested in proving LDP for a class of SPDEs which in integral form is given by  
\begin{equation}
\bu(t) + \int_0^t [\bA \bu (s) + \bB (\bu(s))] ds = \bu (0) + \int_0^t f(s)ds + \e \int_0^t \int_{\mathbb Z} g(s, \bu(s-), z) \tilde{N}^{\e^{-1}}(ds, dz)
\end{equation}
with $\bA,  \bB$ and $g$ satisfying hypotheses ${\bf H.1} - {\bf H.7}$ and 
with $\bu(0)$  in $H$,  $f \in L^2([0, T]; V\pr)$. 

Next, we state some results that will be useful for establishing LDP for $\bu^\ep$ and refer the reader to \cite{BCD} and \cite{BDM08} for the proofs. 
\begin{lemma}\label{MLlemma}
Suppose that hypotheses ${\bf{H.5}}$ and ${\bf{H.6}}$ hold.
Then we have, 
\begin{align}
(i) \,\,\sup_{\varphi \in S^M} \int_0^T \int_{\mathbb Z} (M_g^2(z) + L_g^2(z))(\varphi(t, z) + 1)\nu(dz)dt & < \infty. \label{mlcond1}\\
(ii) \,\, \sup_{\varphi \in S^M} \int_0^T \int_{\mathbb Z} (M_g(z) + L_g(z))| \varphi(t, z) - 1|\nu(dz)dt & < \infty. \label{mlcond2}
\end{align}
(iii) \,\, For any given $\eta > 0$, there exists a $\d > 0$ such that for all $A \subset [0, T]$ satisfying 
$|A| < \delta$, we have 
\begin{equation}
\sup_{\varphi \in S^M} \int_A \int_{\mathbb Z} (M_g(z) + L_g(z))|\varphi(t, z) - 1| \nu(dz)dt < \eta. \label{mlcond3}
\end{equation}
\end{lemma}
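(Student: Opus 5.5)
The plan is to reduce all three statements to two elementary facts about the superlinear function $\ell(r)=r\log r-r+1$, combined with the integrability of $M_g,L_g$. Set $\theta(z):=M_g(z)+L_g(z)$. Then $\theta\in L^2(\nu)$ by \textbf{H.5}--\textbf{H.6}, and by \textbf{H.8} there is $\d>0$ with $\int_B e^{\d\theta^2}\,d\nu<\infty$ for every $B$ with $\nu(B)<\infty$; note $\theta^2\le 2(M_g^2+L_g^2)$, so it suffices to use $\d/2$ in place of $\d$ below. The two elementary facts are the following.

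\begin{enumerate}[(a)]
\item For $a,b\ge0$ and $\sigma\ge1$, we have $ab\le e^{\sigma a}+\ell(b)/\sigma$, which is the Young inequality for the conjugate pair $e^{x}$ and $\ell$.
\item For every $\beta>1$ there is $\kappa(\beta)<\infty$ with $|r-1|\le\kappa(\beta)\ell(r)$ when $|r-1|\ge\beta-1$, and $|r-1|^2\le\kappa(\beta)\ell(r)$ when $|r-1|\le\beta-1$.
\end{enumerate}

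For (i), the term $\int_0^T\!\int \theta^2\,d\nu\,dt=T\norm{\theta}^2_{L^2(\nu)}$ is finite. For the term $\int\theta^2\varphi$, split the space according to whether $\theta^2\le1$ or $\theta^2>1$.
\begin{itemize}
\item On $\{\theta^2\le 1\}$, bound $\varphi\le 1+|\varphi-1|$, so this part is controlled by the argument for (ii) below (there with $\theta$ replaced by $\theta^2\le1$).
\item On $E=\{\theta^2>1\}$, first note $\nu(E)\le\norm{\theta}_2^2<\infty$, so $\int_E e^{\d\theta^2}d\nu<\infty$ by \textbf{H.8}. Applying (a) with $a=\d\theta^2$, $b=\varphi$, $\sigma=1$ gives
\[\d\int_0^T\!\!\int_E\theta^2\varphi\,d\nu\,dt\le T\int_E e^{\d\theta^2}d\nu+L_T(\varphi)\le C+M.\]
\end{itemize}

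For (ii) and (iii), fix $\beta=2$ and, for $A\subset[0,T]$, split $A\times\mathbb Z$ into $\{|\varphi-1|\le1\}$ and $\{|\varphi-1|>1\}$.
\begin{itemize}
\item On $\{|\varphi-1|\le1\}$, Cauchy--Schwarz and (b) give
\[\int_A\!\int\theta|\varphi-1|\le \Big(|A|\,\norm{\theta}_2^2\Big)^{1/2}\big(\kappa M\big)^{1/2}.\]
\item On $\{|\varphi-1|>1\}$, further split by $\theta\le m$ or $\theta>m$, for a parameter $m$ to be chosen.
  \begin{itemize}
  \item Where $\theta\le m$, the contribution is at most $m\kappa\int_A\!\int\ell(\varphi)$, which is $\le m\kappa M$ in general. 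This is enough for (ii).
  \item Where $\theta>m$, use $|\varphi-1|\le\varphi+1$ and apply (a) with $a=\theta$, $b=\varphi$, $\sigma$ large. This bounds the contribution by
  \[\int_A\!\int_{\{\theta>m\}}\big(e^{\sigma\theta}+\theta\big)\,d\nu\,dt+\frac{M}{\sigma}.\]
  Since $e^{\sigma\theta}\le C_\sigma e^{\d\theta^2}$ and $\nu(\theta>m)<\infty$, the integral is finite, and it equals $|A|$ times a constant.
  \end{itemize}
\end{itemize}
This already yields (ii) with $A=[0,T]$.

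The main obstacle is (iii), because of the term $\int_A\!\int_{\{|\varphi-1|>1,\ \theta\le m\}}$. The bound $m\kappa M$ does not shrink with $|A|$, since $\ell(\varphi)$ could concentrate its mass on $A$. I would handle it with a third use of (a), replacing the crude bound $|\varphi-1|\le\kappa\ell(\varphi)$. On this set $\nu$-mass is still an issue, so I would work on the region $\{|\varphi-1|>1\}$ directly. There $\varphi>2$, hence $\varphi\le 2(\varphi-1)$ and $\ell(\varphi)\ge c\,\varphi\log\varphi$. This gives
\[\theta\varphi\le \theta\varphi\,1_{\{\varphi\le K\}}+\frac{\theta}{c\log K}\,\ell(\varphi).\]
The second piece is at most $\frac{m}{c\log K}M$. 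The first piece is at most $\theta K$ on a set where $\varphi>2$, whose $\nu\otimes dt$-measure inside $A\times\mathbb Z$ is at most $M/\ell(2)$; I would bound it by $Km\cdot|A|^{1/2}$-type estimates via Cauchy--Schwarz against $1_{\{|\varphi-1|>1\}}$, which combine with $|A|\,\norm{\theta}_2$.

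To conclude (iii), choose parameters in this order: $\sigma$ so that $M/\sigma<\eta/4$; then $m$ and $K$ so that $mM/(c\log K)<\eta/4$; then finally $\d$ (the bound on $|A|$) small so that the remaining terms, each proportional to a positive power of $|A|$, are together less than $\eta/2$. Taking the supremum over $\varphi\in S^M$ gives uniformity, since every constant above depends on $\varphi$ only through $M$.
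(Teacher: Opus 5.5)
Your argument is correct. It is essentially the standard proof from the works the paper defers to for this lemma (Budhiraja--Chen--Dupuis, Budhiraja--Dupuis--Maroulas): your two facts, $ab\le e^{\sigma a}+\ell(b)/\sigma$ and the quadratic/linear lower bounds on $\ell$, combined with Cauchy--Schwarz against $\theta\in L^2(\nu)$ and Chebyshev bounds on the sets where $\varphi$ or $\theta$ is large. You were right to invoke \textbf{H.8} even though the statement lists only \textbf{H.5}--\textbf{H.6}, because exponential integrability is genuinely needed: for $\nu$ Lebesgue measure on $(0,1)$ and $M_g(z)=\log(1/z)\in L^2(\nu)$, the controls $\varphi=x$ on $(0,z_0)\times[0,T]$ (and $\varphi=1$ elsewhere) with $Tz_0\,\ell(x)=M$ make the integral in (i) grow like $M\log(1/z_0)$ as $z_0\to 0$.
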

\begin{lemma}\label{hlemma}
Suppose that 
\[
\int_0^T \int_{\mathbb Z} |h(s, z)|^2 \nu(dz)ds < \infty,
\]
 and for all $\d > 0$, and any Borel set $B$ in $[0, T]\times {\mathbb Z}$ with  $\nu_T(B)  < \infty$, 
 \[
 \int_B e^{\d |h(s, z)|} \nu(dz)ds < \infty.
 \]
 Then the following hold:
 \begin{enumerate}
 \item{(i)} If $\varphi_n \to \varphi$ in $S^M$, then 
 \begin{equation}
 \int_0^T \int_{\mathbb Z} h(s, z)(\varphi_n(s, z) - 1)\nu(dz)ds \to \int_0^T \int_{\mathbb Z} h(s, z)(\varphi(s, z) - 1)\nu(dz)ds. \label{h1}
 \end{equation}
 \item{(ii)} Given $\e > 0$, there exists a compact set $K_{\e} \subset \mathbb Z$ such that 
 \begin{equation}
 \sup_{\varphi \in S^M}\int_0^T \int_{K_{\e}^c} |h(s, z)| \,|\varphi(s, z) - 1| \nu(dz)ds < \e. \label{h2}
 \end{equation}
 \item{(iii)} For any compact $K \subset \mathbb Z$, 
 \begin{equation}
  \sup_{\varphi \in S^M}\int_0^T \int_{K} |h(s, z)|1_{\{ |h(s, z)| \ge R\}}\, \varphi(s, z)  \nu(dz)ds \ \xrightarrow{R \to \infty}\ 0. \label{h3}
 \end{equation}

 \end{enumerate}
\end{lemma}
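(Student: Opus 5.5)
The statement to prove is Lemma \ref{hlemma}. The plan is to prove (ii) and (iii) first by elementary splitting, using the superlinear inequality
\[
ab \le e^{\sigma a} + \tfrac{1}{\sigma}\ell(b), \qquad a,b\ge 0,\ \sigma\ge 1,
\]
together with the bound $L_T(\varphi)\le M$ on $S^M$. Part (i) will then follow from (ii), (iii) and the vague convergence $\nu_T^{\varphi_n}\to\nu_T^{\varphi}$.

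\emph{Part (iii).} Fix a compact $K$, so $\nu_T([0,T]\times K)<\infty$. Write $\{|h|\ge R\}$ for the corresponding subset of $[0,T]\times K$. Applying the inequality with $a=|h|$ and $b=\varphi$ gives
\[
\int_{[0,T]\times K} |h|\,1_{\{|h|\ge R\}}\,\varphi \,d\nu_T \;\le\; \int_{\{|h|\ge R\}} e^{\sigma|h|}\,d\nu_T + \frac{M}{\sigma}.
\]
Choose $\sigma$ large first, so that $M/\sigma$ is small. For that fixed $\sigma$, the exponential integrability hypothesis on the finite-measure set $[0,T]\times K$ makes $e^{\sigma|h|}$ integrable there. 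Dominated convergence then sends the first term to $0$ as $R\to\infty$, uniformly in $\varphi\in S^M$.

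\emph{Part (ii).} Split $|\varphi-1|$ over the sets $\{|\varphi-1|\le 1\}$ and $\{\varphi>2\}$, with the remaining piece $\{1<\varphi\le 2\}$ treated like the first. On the set where $\varphi\le 2$, use $\ell(r)\ge c|r-1|^2$ and Cauchy--Schwarz:
\[
\int_{K^c} |h|\,|\varphi-1|\,1_{\{\varphi\le 2\}}\,d\nu_T \;\le\; \Big(\int_{K^c}|h|^2\,d\nu_T\Big)^{1/2}\Big(\frac{M}{c}\Big)^{1/2}.
\]
The first factor is small once $K$ is large, since $h\in L^2(\nu_T)$. On the set $\{\varphi>2\}$, bound $|\varphi-1|\le\varphi$. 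Then split further according to $|h|\le 1$ or $|h|>1$:
\begin{itemize}
\item If $|h|\le 1$, the integrand is at most $\varphi$, and on $\{\varphi>2\}$ we have $\varphi\le C\,\ell(\varphi)$. Since $\sup_{\varphi\in S^M}\int_{\{\varphi>2\}}\ell(\varphi)\,d\nu_T$ is only bounded, this piece needs a refinement, described below.
\item If $|h|>1$, use the superlinear inequality as in (iii), noting that $\nu_T(\{|h|>1\})\le\|h\|_2^2<\infty$. The exponential hypothesis then applies on this finite-measure set, and dominated convergence handles the intersection with $K^c$.
\end{itemize}

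\emph{Refinement for $|h|\le 1$, $\varphi>2$ on $K^c$.} Write $|h|\varphi\le e^{\sigma|h|}-1+\ell(\varphi)/\sigma$, using the version $ab\le (e^{\sigma a}-1)+\ell(b)/\sigma$, which holds because $\ell$ is the Legendre dual of $e^{a}-1$. For $|h|\le 1$ we have $e^{\sigma|h|}-1\le C_\sigma|h|$. However, $|h|$ is only in $L^2$, not $L^1$. To fix this, use $|h|\le|h|^2/\eta$ on $\{|h|\ge\eta\}$, and on $\{|h|<\eta\}$ bound the contribution directly by $\eta\int\varphi\,1_{\{\varphi>2\}}\,d\nu_T\le C\eta M$. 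Choosing $\sigma$ large, then $\eta$ small, then $K$ large makes everything small. This bookkeeping is the main obstacle.

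\emph{Part (i).} Given $\varepsilon>0$, use (ii) to truncate both integrals to a compact $K$, uniformly in $n$ and for $\varphi$ itself. Next, use (iii) to replace $h$ by $h1_{\{|h|<R\}}$ on $[0,T]\times K$, with an error that is small uniformly in $n$. On the finite measure $\nu_T|_{[0,T]\times K}$, approximate the bounded function $h1_{\{|h|<R\}}$ in $L^1$ by a continuous function $\tilde h$ with compact support. The error $\int|h1_{\{|h|<R\}}-\tilde h|\,\varphi_n\,d\nu_T$ must be small uniformly in $n$, and this follows from the uniform integrability of $\{\varphi_n\}$ given by $L_T(\varphi_n)\le M$ and superlinearity of $\ell$. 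Vague convergence of $\nu_T^{\varphi_n}$ then gives $\int\tilde h\,\varphi_n\,d\nu_T\to\int\tilde h\,\varphi\,d\nu_T$. The term with $-1$ is independent of $n$. Letting the truncation parameters go to their limits proves \eqref{h1}.
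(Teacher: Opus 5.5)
Your proof is correct and is essentially the standard argument behind this lemma, which the paper does not prove but takes from \cite{BCD}. Parts (ii) and (iii) follow from $ab\le e^{\sigma a}+\ell(b)/\sigma$ together with $L_T(\varphi)\le M$. Part (i) follows by truncating with (ii)--(iii) and then passing to the limit on $[0,T]\times K$, using uniform integrability of $\{\varphi_n\}$ and vague convergence. This is the same mechanism the paper uses, through Lemma~2.8 of \cite{BD}, in Step~2 of the proof of Theorem~\ref{ThmLDP1}.

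Two small points:
\begin{enumerate}
\item The $\eta$-bookkeeping in your refinement is unnecessary. Since $\nu_T(\{\varphi>2\})\le M/\ell(2)$ uniformly on $S^M$, Cauchy--Schwarz gives $\int_{K^c\cap\{\varphi>2\}}|h|\,d\nu_T\le\|h1_{K^c}\|_2\,(M/\ell(2))^{1/2}$ directly.
\item In (i) you should truncate $\tilde h$ to $[-R,R]$. Then, with $f=|h1_{\{|h|<R\}}1_K-\tilde h|$, the bound $\int f\varphi_n\,d\nu_T\le c\|f\|_1+2R\,cM/\ell(c)$ lets you fix $c$ before choosing $\tilde h$.
\end{enumerate}
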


\paragraph{ A General Moderate Deviation Result from \cite{BDG}}

\label{Sec:thm} For any $\epsilon>0$, let $\mathscr{G}^{\epsilon}$ be a
measurable map from $\mathbb{M}$ to $\mathbb{U}$, where $\mathbb{U}$ is a
Polish space. Let $a:\mathbb{R}_{+}\rightarrow (0,1)$ be such that as $%
\ep\rightarrow0$ 
\begin{equation}
a(\ep)\rightarrow0\mbox{ and }b(\ep)=\frac{\ep}{%
a^{2}(\ep)}\rightarrow0.  \label{eq:eq1138}
\end{equation}
In this section we will formulate a general sufficient condition for the
collection $\mathscr{G}^{\ep}(\ep N^{\ep^{-1}})$ to
satisfy a large deviation principle with speed $b(\ep)$ and a rate
function that is given through a suitable quadratic form.

For $\ep>0$ and $M<\infty$, consider the spaces 
\begin{align}
\mathcal{S}_{+,\ep}^{M} & \doteq\{\varphi:\mathbb{Z}\times
\lbrack0,T]\rightarrow\mathbb{R}_{+}: \ L_{T}(\varphi)\leq
Ma^{2}(\ep)\}  \label{ctrlclass}, \\
\mathcal{S}_{\ep}^{M} & \doteq\{\psi:\mathbb{Z}\times\lbrack
0,T]\rightarrow\mathbb{R}:\ \psi=(\varphi-1)/a(\ep),\varphi \in%
\mathcal{S}_{+,\ep}^{M}\}.  \notag
\end{align}
We also let 
\begin{align}
{\mathscr{U}}_{+,\ep}^{M} & \doteq\left\{ \varphi\in\bar{\mathcal{A}}%
_{b}:\varphi(\cdot,\cdot,\omega)\in\mathcal{S}_{+,\ep}^{M},\text{ }%
\mathbb{\bar{P}}\text{-a.s.}\right\}  \label{eq:def_of_Us}, \quad
{\mathscr{U}}_{\ep}^{M}  \doteq\left\{ \psi\in\bar{\mathcal{A}}%
:\psi(\cdot,\cdot,\omega)\in\mathcal{S}_{\ep}^{M},\text{ }\mathbb{%
\bar{P}}\text{-a.s.}\right\} .  \notag
\end{align}

The norm in the Hilbert space $L^{2}(\nu_{T})$ will be denoted by $\norm{
\cdot }_{2}$ and $B_{2}(r)$ denotes the closed ball of radius $r$ in $L^{2}(\nu_{T}) $. 

 A set $\{\psi^{\ep}\}\subset%
\bar {\mathcal{A}}$ with the property that $\sup_{\ep>0}\|\psi
^{\ep}\|_{2}\leq M$ a.s. for some $M<\infty$ will be regarded as a
collection of $B_{2}(M)$-valued random variables, where $B_{2}(M)$ is
equipped with the weak topology on the Hilbert space $L^{2}(\nu_{T})$. Since 
$B_{2}(M)$ is weakly compact, such a collection of random variables is
automatically tight. Throughout this paper $B_{2}(M)$ will be regarded as
the compact metric space obtained by equipping it with the weak topology on $%
L^{2}(\nu_{T})$.

Suppose $\varphi \in \mathcal{S}_{+,\ep }^{M}$.  Then as shown in  Lemma  
3.2  in \cite{BDG}, there exists $\kappa _{2}(1)\in (0,\infty )$ that is
independent of $\ep $ and such that $\psi 1_{\{|\psi
|<1/a(\ep )\}}\in B_2((M\kappa_2(1))^{1/2})$ where 
$\psi = (\varphi - 1)/a(\e)$. 

We now state an LDP result that is crucial  for establishing  MDP for a family of random elements (recall that MDP entails large deviation analysis in a different scaling regime.) 

For each $\ep>0$, let $\mathscr{G}^{\e}: \mathcal{M}_{FC}(\mathbb{Z}\times [0,T]) \to \mathbb{U}$ be a measurable map.
\begin{condition}
\label{gencond} There exists a measurable map $\mathscr{G}_{0}:L^{2}(\nu
_{T})\rightarrow \mathbb{U}$  such that the following  conditions hold.\label{cond1}
		
\begin{enumerate}
\item[(a)] Given $M\in (0,\infty )$, suppose that $h^{\ep },h\in
B_{2}(M)$ and $h^{\ep }\rightarrow h$. Then 
\begin{equation*}
\mathscr{G}_{0}(h^{\ep })\rightarrow \mathscr{G}_{0}(h).
\end{equation*}
			
\item[(b)] Given $M\in (0,\infty )$, let $\{\varphi ^{\ep
}\}_{\ep >0}$ be such that for every $\ep >0$, $\varphi^{\ep }\in \mathscr{U}_{+,\ep }^{M}$ and for some $\beta \in (0,1]$, $\psi ^{\ep }1_{\{|\psi ^{\ep }|\leq \beta
/a(\ep )\}}\Rightarrow \psi $ in $B_{2}(M')$ for some constant $M'>0$ and 
where $\psi ^{\ep }=(\varphi ^{\ep }-1)/a(\ep )$.
Then 
\begin{equation*}
\mathscr{G}^{\ep }(\ep N^{\ep ^{-1}\varphi^{\ep }})\Rightarrow \mathscr{G}_{0}(\psi ).
\end{equation*}%
\label{cond2}
\end{enumerate}
\end{condition}

For $\bar{\by} \in \mathbb{U}$, define $\mathbb{S}^{\mathrm{MD}}_{\bar{\by} } = \{\psi\in L^{2}(\nu_{T}): \bar{\by}=\mathscr{G}_{0}(\psi)\}$, and define $I^{\text{MD}}: \mathbb{U} \to [0, \infty]$  by 
\begin{equation}
I^{\mathrm{MD}}(\bar{\by})=\inf_{\psi\in\mathbb{S}^{\mathrm{MD}}_{\bar{\by}}}\left[ \frac{1}{2}\|\psi\|_{2}^{2}%
\right].  \label{eq:first_rate}
\end{equation}

\begin{theorem}\label{LDPpoi} \cite{BDG} Suppose that the functions $\mathscr{G}^{\ep }$ and $%
\mathscr{G}_{0}$ satisfy Condition \ref{gencond}. Then $I^{\mathrm{MD}}$ defined by (\ref{eq:first_rate}) is a rate function, and $\{Y^{\ep }\equiv \mathscr{G}^{\ep }(\ep N^{\ep ^{-1}})\}$ satisfies an LDP with speed $b(\ep )$ and the rate function $I^{\mathrm{MD}}$.
\end{theorem}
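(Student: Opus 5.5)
The plan is to verify the Laplace principle for $\{Y^\ep\}$ with speed $1/b(\ep)$ and rate function $I^{\mathrm{MD}}$; since $\mathbb U$ is Polish, this is equivalent to the LDP. The basic tool is the variational representation for functionals of a PRM from \cite{BDM09}. For bounded measurable $G$ and $\theta>0$,
\[
-\log \mathbb E\, e^{-G(N^{\theta})}=\inf_{\varphi\in\bar{\mathcal A}_{b,+}}\mathbb E\Big[\theta L_T(\varphi)+G(N^{\theta\varphi})\Big].
\]
Take $\theta=\ep^{-1}$ and $G=h(\mathscr G^\ep(\ep\,\cdot))/b(\ep)$ with $h\in C_b(\mathbb U)$, and multiply by $b(\ep)$. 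Since $b(\ep)/\ep=a^{-2}(\ep)$, this gives
\[
-b(\ep)\log\mathbb E\, e^{-h(Y^\ep)/b(\ep)}=\inf_{\varphi}\mathbb E\Big[a^{-2}(\ep)L_T(\varphi)+h\big(\mathscr G^\ep(\ep N^{\ep^{-1}\varphi})\big)\Big].
\]
Everything then reduces to analysing this control problem as $\ep\to0$.

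\emph{Step 1 (rate function).} I first show that $\{I^{\mathrm{MD}}\le c\}$ is compact. Take $\bar\by_n$ in this set and $\psi_n$ with $\bar\by_n=\mathscr G_0(\psi_n)$ and $\tfrac12\|\psi_n\|_2^2\le c+1/n$. Then $\psi_n\in B_2(\sqrt{2c+2})$, which is weakly compact, so a subsequence converges weakly to some $\psi$. By Condition \ref{gencond}(a), $\mathscr G_0(\psi_n)\to\mathscr G_0(\psi)$, and weak lower semicontinuity of the norm gives $I^{\mathrm{MD}}(\mathscr G_0(\psi))\le c$.

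\emph{Step 2 (Laplace upper bound, i.e.\ $\liminf$ of the left side $\ge\inf[I^{\mathrm{MD}}+h]$).} For each $\ep$ choose an $\ep$-optimal control $\varphi^\ep$. Because $h$ is bounded, $\mathbb E[a^{-2}L_T(\varphi^\ep)]\le 2\|h\|_\infty+1$. A standard stopping argument, which costs an arbitrarily small error $\sigma$, then lets me replace $\varphi^\ep$ by a control with $L_T(\varphi^\ep)\le Ma^2(\ep)$ a.s., i.e.\ $\varphi^\ep\in\mathscr U^M_{+,\ep}$.

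Put $\psi^\ep=(\varphi^\ep-1)/a(\ep)$. By Lemma 3.2 of \cite{BDG}, $\psi^\ep 1_{\{|\psi^\ep|\le\beta/a\}}\in B_2((M\kappa_2(\beta))^{1/2})$. This ball is compact, so along a subsequence the truncated controls converge in law to some $\psi$. Condition \ref{gencond}(b) then gives $\mathscr G^\ep(\ep N^{\ep^{-1}\varphi^\ep})\Rightarrow\mathscr G_0(\psi)$.

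The key cost inequality uses a Taylor expansion of $\ell(1+x)$ near $0$: there is $c(\beta)\to 0$ as $\beta\to0$ such that $\ell(1+x)\ge \tfrac{1-c(\beta)}2x^2$ for $|x|\le\beta$. Hence
\[
a^{-2}L_T(\varphi^\ep)\ge \tfrac{1-c(\beta)}{2}\big\|\psi^\ep1_{\{|\psi^\ep|\le\beta/a\}}\big\|_2^2 .
\]
Weak lower semicontinuity of the norm and Fatou's lemma then give
\[
\liminf\big(\text{left side}\big)\ge(1-c(\beta))\,\mathbb E\big[\tfrac12\|\psi\|_2^2+h(\mathscr G_0(\psi))\big]-\sigma .
\]
Letting $\beta,\sigma\to0$ yields the bound $\ge\inf_{\bar\by}[I^{\mathrm{MD}}(\bar\by)+h(\bar\by)]$.

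\emph{Step 3 (Laplace lower bound).} Fix $\bar\by$ and $\psi$ with $\tfrac12\|\psi\|_2^2\le I^{\mathrm{MD}}(\bar\by)+\sigma$. I first approximate $\psi$ in $L^2(\nu_T)$ by a bounded $\tilde\psi$ supported on a set $K\times[0,T]$ with $K$ compact. Continuity of $\mathscr G_0$ from Condition \ref{gencond}(a) controls the change in $h(\mathscr G_0(\cdot))$, and hence in $\tfrac12\|\cdot\|_2^2+h(\mathscr G_0(\cdot))$, under this approximation. Then set $\varphi^\ep=1+a(\ep)\tilde\psi$. For small $\ep$ this is a deterministic control in $\bar{\mathcal A}_{b,+}$, and $\psi^\ep=\tilde\psi$ exactly, with no truncation effect once $a(\ep)\|\tilde\psi\|_\infty\le\beta$. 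Since $\ell(1+x)=\tfrac{x^2}{2}+O(|x|^3)$, we get $a^{-2}L_T(\varphi^\ep)\to\tfrac12\|\tilde\psi\|_2^2$, and Condition \ref{gencond}(b) gives $h(\mathscr G^\ep(\ep N^{\ep^{-1}\varphi^\ep}))\to h(\mathscr G_0(\tilde\psi))$ in expectation. Plugging this control into the representation yields the matching $\limsup$ bound.

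The main obstacle is Step 2. There one must justify reducing to a.s.\ bounded costs, and then control the part of $\psi^\ep$ where $|\psi^\ep|>\beta/a$, where no $L^2$ bound is available. The plan is to discard that part, using only $\ell\ge0$ on the lower bound of the cost. Condition (b) is designed to tolerate this, since it involves only the truncated controls, so the limit is insensitive to the discarded part.
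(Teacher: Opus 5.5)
Your proposal is correct and is essentially the standard weak-convergence proof of \cite{BDG}; the paper does not prove this theorem and simply cites that reference. The ingredients all match it: the variational representation, localization to $\mathscr U^M_{+,\epsilon}$, truncation at level $\beta/a(\epsilon)$ with $\ell(1+x)\ge\frac{1-c(\beta)}{2}x^2$ together with Condition~\ref{gencond}(b) for the upper bound, and the control $1+a(\epsilon)\tilde\psi$ with bounded, compactly supported $\tilde\psi$ for the lower bound. One small slip is in Step 2, where the factor $1-c(\beta)$ should multiply only the quadratic term, giving $\frac{1-c(\beta)}{2}\mathbb E\|\psi\|_2^2+\mathbb E\, h(\mathscr G_0(\psi))-\sigma\ge\inf_{\bar\by}\big[(1-c(\beta))I^{\mathrm{MD}}(\bar\by)+h(\bar\by)\big]-\sigma$ (alternatively, assume $h\ge0$ without loss of generality); after this correction, letting $\beta\to0$ works as you describe.
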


Finally, we state some results that will be needed for proving MDP of $\bu^\ep$. 

For any fixed $\delta > 0$, let
\[
\mathcal{H}^{\d} = \{h: \mathbb{Z} \to \mathbb{R}: \int_A \exp\{\d h^2(z)\} \nu(dz) < \infty \,\,\text{for all}\,\, A \in \mathcal{B}(\mathbb{Z}) \,\,\text{with}\,\, \nu(A) < \infty\}.
\]
The next three Lemmas appear as Lemma 3.2, Lemma 4.3 and Lemma 4.4 in 
\cite {BDG}.
\begin{lemma}\label{lemma1}
Let $\varphi \in \mathcal{S}_{+, \e}^M$ for some finite $M$, and $\psi = (\varphi - 1)/a(\e)$. Then, there exist 
positive-valued functions $\kappa_1, \kappa{\pr}_1$ and $\kappa_2$ defined on $(0, \infty)$ such that 
\begin{enumerate}
\item[1.] $\int_0^T \int_{\mathbb Z} |\psi |1_{\{|\psi | \ge \beta/ a(\e)\}} \nu(dz)ds \le Ma(\e)\kappa_1(\beta),$ 
\item[2.] $\int_0^T \int_{\mathbb Z} \varphi 1_{\{\varphi \ge \beta\}} \nu(dz)ds \le Ma^2(\e)\kappa{\pr}_1(\beta)$ if $\beta > 1$, and
\item[3.] $\int_0^T \int_{\mathbb Z} |\psi |^21_{\{|\psi | \le \beta/ a(\e)\}} \nu(dz)ds \le M \kappa_2(\beta).$ 
\end{enumerate}
The functions $\kappa_1(\beta), \kappa{\pr}_1(\beta)$ tend to $0$ as $\beta \to \infty$, and $\kappa_2$ is non-decreasing.
\end{lemma}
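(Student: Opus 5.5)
The three estimates in Lemma \ref{lemma1} rest on elementary comparisons between $\ell$ and quadratic or linear functions. I will prove each by splitting $\mathbb Z\times[0,T]$ into regions where $\varphi$ is close to $1$ and where it is far from $1$. The only input is $L_T(\varphi)\le Ma^2(\e)$, with $\varphi=1+a(\e)\psi$.

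\textbf{Step 1: two properties of $\ell$.} First, I will show that for each $\beta>0$ there is a constant $c_2(\beta)>0$ with
\[
\ell(1+x)\ge c_2(\beta)\,x^2 \quad \text{for } |x|\le\beta .
\]
This holds because $\ell(1+x)/x^2\to 1/2$ as $x\to0$, and the ratio is positive and continuous on $[-1,\beta]\setminus\{0\}$ (only $x\ge -1$ matters, since $\varphi\ge0$). Moreover $c_2(\beta)$ is non-increasing in $\beta$. Second, I will show that there is a function $c_1(\beta)\uparrow\infty$ with
\[
\ell(r)\ge c_1(\beta)\,|r-1| \quad \text{whenever } |r-1|\ge\beta .
\]
Here I use that $\ell(r)/|r-1|$ behaves like $\log r$ for large $r$, and that by convexity this ratio is increasing in $|r-1|$ on each side of $1$. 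The left side, $r\in[0,1-\beta]$, is nonempty only for $\beta<1$, and there the ratio is bounded below by $\ell(1-\beta)/\beta>0$. Hence $c_1(\beta)=\min\{\ell(1+\beta)/\beta,\ \ell(1-\beta)/\beta\}$, with only the first term kept when $\beta\ge1$. This $c_1$ is positive for every $\beta$ and tends to $\infty$ as $\beta\to\infty$.

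\textbf{Step 2: the three bounds.} On the set $\{|\psi|\ge\beta/a(\e)\}$ we have $|\varphi-1|\ge\beta$. The second property of Step 1 then gives
\[
a(\e)\int|\psi|\,1_{\{|\psi|\ge\beta/a(\e)\}} \le c_1(\beta)^{-1}\int\ell(\varphi)\le c_1(\beta)^{-1}Ma^2(\e).
\]
This is item 1 with $\kappa_1(\beta)=1/c_1(\beta)\to0$. For item 2 with $\beta>1$, the set $\{\varphi\ge\beta\}$ satisfies $\varphi-1\ge\beta-1>0$ and $\varphi\le \frac{\beta}{\beta-1}(\varphi-1)$. Therefore
\[
\int\varphi\,1_{\{\varphi\ge\beta\}}\le \frac{\beta}{(\beta-1)\,c_1(\beta-1)}\,Ma^2(\e),
\]
and this constant tends to $0$ as $\beta\to\infty$, since $c_1(\beta-1)\sim\log\beta$. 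For item 3, on the set $\{|\psi|\le\beta/a(\e)\}$ we have $|\varphi-1|\le\beta$. The first property of Step 1 then gives
\[
a^2(\e)|\psi|^2\le c_2(\beta)^{-1}\ell(\varphi),
\]
and integrating yields the bound $M\kappa_2(\beta)$ with $\kappa_2=1/c_2$, which is non-decreasing in $\beta$.

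\textbf{Main obstacle.} The only delicate point is establishing the monotonicity and limiting behaviour of the constants in Step 1. The constants must be independent of $\e$, and $\kappa_1,\kappa_1'$ must decay as $\beta\to\infty$. Both follow from convexity of $\ell$ together with $\ell(r)\sim r\log r$ as $r\to\infty$. I would verify them directly by elementary calculus.
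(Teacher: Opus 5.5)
Your proof is correct and follows essentially the same route as the source. The paper gives no argument of its own and cites Lemma 3.2 of \cite{BDG}, which is proved exactly as you do: take the pointwise bounds $|x-1|\le\kappa_1(\beta)\ell(x)$ for $|x-1|\ge\beta$, $x\le\kappa_1'(\beta)\ell(x)$ for $x\ge\beta>1$, and $|x-1|^2\le\kappa_2(\beta)\ell(x)$ for $|x-1|\le\beta$, and integrate them against $L_T(\varphi)\le Ma^2(\e)$; your convexity argument for the constants, including the monotonicity of $\kappa_2$ and the decay of $\kappa_1,\kappa_1'$, is sound.
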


\begin{lemma}\label{lemma2}  
Let $h \in L^2(\l) \cap \mathcal{H}^{\d}$ and $I$ be a measurable subset of $[0, T]$. Fix $\zeta, M > 0$. 
Then there exist  functions $\Gamma_h,\, \rho_h, \,\theta: (0, \infty) \to (0, \infty)$ such that $\Gamma_h(u) \to 0$ as 
$u \to \infty$,  and $\theta(u) \to 0$ as 
$u \to 0$,  such that for all positive $\e$ and $\beta$, 
\begin{equation}
\sup_{\psi \in S_{\e}^M} \int_{\mathbb Z \times I} | h(z) \psi(z, s)|1_{\{|\psi | \ge \beta/a(\e)\}} \nu(dz)ds \le \Gamma_h( \beta) a^{1 - \zeta}(\e) + (1 + | I |) \theta(\e)\label{l2est1}
\end{equation}
and
\begin{equation}
\sup_{\psi \in S_{\e}^M} \int_{\mathbb Z\times I} |h(z) \psi(z, s)| \nu(dz)ds \le \rho_h(\beta) \sqrt{| I |}  + \Gamma_h( \beta) a^{1 - \zeta}(\e) + (1 + | I |) \theta(\e). \label{l2est2}
\end{equation}
\end{lemma}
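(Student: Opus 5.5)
The statement to prove is Lemma \ref{lemma2}: the two bounds \eqref{l2est1} and \eqref{l2est2}, uniform over $\psi=(\varphi-1)/a(\e)$ with $\varphi\in\mathcal S^M_{+,\e}$. The plan is to split $\mathbb Z\times I$ according to the size of $|h|$ and of $\psi$. Throughout I use two elementary properties of $\ell(r)=r\log r-r+1$. First, for $|r-1|\le \beta$ one has $\ell(r)\ge c(\beta)(r-1)^2$. Second, for $r$ large, $\ell(r)\ge c\,r\log r$. These produce the constraint $\int\ell(\varphi)\,d\nu_T\le Ma^2(\e)$, which feeds the conclusions of Lemma \ref{lemma1}.

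\emph{Proof of \eqref{l2est1}.} On the set $\{|\psi|\ge\beta/a(\e)\}$, write $\varphi=1+a(\e)\psi$. Wherever $\psi<0$ we have $|\psi|\le 1/a(\e)$, so (up to harmless constants) the relevant region is $\{\varphi\ge 1+\beta\}$, where $|h\psi|\le |h|\varphi/a(\e)$. Fix a level $L$ and split into $\{|h|\le L\}$ and $\{|h|>L\}$.
\begin{itemize}
\item On $\{|h|\le L\}$, part 1 of Lemma \ref{lemma1} gives the bound $L\,Ma(\e)\kappa_1(\beta)$.
\item On $\{|h|>L\}$, apply the Young-type inequality $xy\le e^{\d x^2}+\d^{-1}y\log y$, valid for $y\ge e$. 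The term $\int \varphi\log\varphi$ over $\{\varphi\ge1+\beta\}$ is at most $C(\beta)Ma^2(\e)$. The term $\int_{\{|h|>L\}\times I}e^{\d h^2}$ is finite because $\nu(|h|>L)<\infty$, since $h\in L^2$, and $h\in\mathcal H^\d$; moreover it tends to $0$ as $L\to\infty$ by dominated convergence.
\end{itemize}
Dividing by $a(\e)$, then choosing $L=L(\e)\to\infty$ slowly and absorbing a factor $a^{-\zeta}(\e)$, yields the form $\Gamma_h(\beta)a^{1-\zeta}(\e)+(1+|I|)\theta(\e)$. The factor $|I|$ enters through $\int_I\int e^{\d h^2}1_{\{|h|>L\}}\le |I|\cdot\int_{\mathbb Z} e^{\d h^2}1_{\{|h|>L\}}\,d\nu$, and that integral is what defines $\theta(\e)$. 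That $\Gamma_h(\beta)\to0$ as $\beta\to\infty$ comes from $\kappa_1(\beta)\to0$, together with the fact that the $\varphi\log\varphi$ mass on $\{\varphi\ge1+\beta\}$ divided by $\log(1+\beta)$ also goes to zero.

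\emph{Proof of \eqref{l2est2}.} Split into $\{|\psi|\ge\beta/a(\e)\}$ and $\{|\psi|<\beta/a(\e)\}$. The first region is exactly \eqref{l2est1}. On the second, apply Cauchy--Schwarz on $\mathbb Z\times I$:
\[
\int_{\mathbb Z\times I}|h||\psi|1_{\{|\psi|<\beta/a(\e)\}}\le \|h\|_{L^2(\nu)}\sqrt{|I|}\,\Big(\int |\psi|^21_{\{|\psi|<\beta/a(\e)\}}\,d\nu_T\Big)^{1/2}\le \|h\|_2\sqrt{M\kappa_2(\beta)}\sqrt{|I|},
\]
using part 3 of Lemma \ref{lemma1}. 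This identifies $\rho_h(\beta)=\|h\|_2\sqrt{M\kappa_2(\beta)}$.

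The main obstacle is the bookkeeping in \eqref{l2est1}. The divergent factor $1/a(\e)$ must be traded against the $a^2(\e)$ entropy budget and the tail integral $\int e^{\d h^2}$, so that what remains is a clean $a^{1-\zeta}$ term plus an $\e$-dependent $\theta$ that vanishes. My first approach is the explicit truncation at $L=a(\e)^{-\zeta/2}$ together with the convex duality $xy\le e^{\d x^2}+\ell$-type bounds. If this fails, I would follow Lemma 4.3 of \cite{BDG} directly.
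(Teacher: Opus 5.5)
The paper gives no proof of this lemma; it imports it from \cite{BDG}. Your overall plan is sound: truncate $|h|$ at a level $L\approx a(\e)^{-\zeta}$, use part 1 of Lemma \ref{lemma1} on $\{|h|\le L\}$, use an exponential/entropy duality on $\{|h|>L\}$, and use Cauchy--Schwarz with part 3 of Lemma \ref{lemma1} for \eqref{l2est2}. Your proof of \eqref{l2est2}, with $\rho_h(\beta)=\|h\|_{L^2(\nu)}\sqrt{M\kappa_2(\beta)}$, is correct.

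The gap is the tail term in \eqref{l2est1}, which is the step you flagged as the main obstacle. Because you use the full exponent $\d$ in $xy\le e^{\d x^2}+\d^{-1}y\log y$, after dividing by $a(\e)$ you are left with
\[
\frac{|I|}{a(\e)}\int_{\{|h|>L\}}e^{\d h^2}\,d\nu .
\]
All you know is that $\int_{\{|h|>L\}}e^{\d h^2}d\nu\to 0$ as $L\to\infty$, with no rate. To kill the factor $1/a(\e)$ you would need $L(\e)$ to grow \emph{fast}, not slowly. But the bounded piece $L\,Ma(\e)\kappa_1(\beta)$ forces $L\lesssim a(\e)^{-\zeta}$.

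These requirements are incompatible in general. Take $\mathbb Z=[e,\infty)$, $h(z)=z$, $\nu(dz)=z^{-1}(\log z)^{-2}e^{-\d z^2}dz$. Then $h\in L^2(\nu)\cap\mathcal H^{\d}$, but $\int_{\{|h|>L\}}e^{\d h^2}d\nu=1/\log L$. At $L=a(\e)^{-\zeta}$ your term is therefore of order $|I|/(\zeta\, a(\e)\log(1/a(\e)))\to\infty$, so your bound cannot be closed for any $\zeta$.

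The pieces you set aside as harmless need the same kind of rate. Inside $\{|h|>L\}$, the region $\{\varphi\le 1-\beta\}$ and the band $\{1+\beta\le\varphi<e\}$ contribute $\frac{C}{a(\e)}\int_{\{|h|>L\}\times I}|h|$.

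Your justification of $\Gamma_h(\beta)\to0$ also fails as written. The entropy term from your duality is of size $Ma(\e)/\d$ uniformly in large $\beta$, since $r\log r/\ell(r)\ge 1$ and tends to $1$, not $0$. Nothing in your inequality divides by $\log(1+\beta)$, so this term cannot be charged to $\Gamma_h(\beta)$.

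Both problems disappear if you use only part of the exponential integrability. Use the Legendre duality of $\ell$ (its conjugate is $e^p-1$), namely $xy\le e^{x}+\ell(y)$ for $x,y\ge 0$, together with $|\psi|\le (1+\varphi)/a(\e)$. On $\{|h|>L\}\times I$ this gives the bound
\[
\frac{|I|}{a(\e)}\int_{\{|h|>L\}}\big(|h|+e^{|h|}\big)\,d\nu+Ma(\e).
\]
Since $|h|-\d h^2$ is decreasing for $|h|\ge 1/(2\d)$, for $L\ge 1/(2\d)$ you get
\[
\int_{\{|h|>L\}}e^{|h|}\,d\nu\le e^{L-\d L^2}\int_{\{|h|>1/(2\d)\}}e^{\d h^2}\,d\nu .
\]
The right side is finite because $\nu(|h|>1/(2\d))<\infty$, and it is super-polynomially small at $L=a(\e)^{-\zeta}$; the same bound covers $|h|\le e^{|h|}$. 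You can then take
\[
\Gamma_h(\beta)=M\kappa_1(\beta),\qquad \theta(\e)=\frac{1}{a(\e)}\int_{\{|h|>a(\e)^{-\zeta}\}}\big(|h|+e^{|h|}\big)\,d\nu+Ma(\e).
\]
The $\beta$-independent entropy cost $Ma(\e)$ is the ``$1$'' in $(1+|I|)\theta(\e)$, and the tail integral supplies the $|I|$ part.
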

\begin{lemma}\label{lemma3} 
Let $h \in L^2(\l) \cap \mathcal{H}^{\d}$ be positive valued. Then, for any $\beta > 0$, 
\[
\lim_{\e \to 0} \sup_{\{\psi \in S_{\e}^M\}} \int_{\mathbb Z \times [0, T]} h(z) |\psi(z, s)|1_{\{|\psi | \ge \beta/a(\e)\}}
\nu(dz)ds = 0.
\]
\end{lemma}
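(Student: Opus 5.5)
This lemma (Lemma 4.4 of \cite{BDG}) asserts that for positive $h\in L^2(\nu)\cap\mathcal{H}^{\d}$ and fixed $\beta>0$,
\[
\sup_{\psi \in S_{\e}^M}\int_{\mathbb Z\times[0,T]} h|\psi|1_{\{|\psi|\ge\beta/a(\e)\}}\,d\nu_T\to 0 .
\]
The plan is to derive it from Lemma \ref{lemma2} with $I=[0,T]$ and a small $\zeta\in(0,1)$, say $\zeta=1/2$. That estimate \eqref{l2est1} gives, uniformly in $\psi\in S_\e^M$,
\[
\int_{\mathbb Z \times [0,T]} h|\psi|1_{\{|\psi|\ge\beta/a(\e)\}}\,\nu(dz)ds\le \Gamma_h(\beta)\,a^{1-\zeta}(\e)+(1+T)\theta(\e).
\]
For fixed $\beta$, the first term vanishes as $\e\to0$ since $a(\e)\to0$, and the second vanishes since $\theta(\e)\to0$. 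This gives the claim directly.

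To keep the argument self-contained, I would also sketch why \eqref{l2est1} holds. Write $\psi=(\varphi-1)/a(\e)$. On $\{|\psi|\ge\beta/a(\e)\}$ we have $|\varphi-1|\ge\beta$, so $|h\psi|1_{\{\cdot\}}=a(\e)^{-1}h|\varphi-1|1_{\{|\varphi-1|\ge\beta\}}$. I split this set into three pieces:
\begin{enumerate}
\item[(a)] $\{\varphi\le 1-\beta\}$, nonempty only if $\beta<1$;
\item[(b)] $\{\varphi\ge 1+\beta\}$ intersected with $\{h\le K\}$;
\item[(c)] $\{\varphi\ge 1+\beta\}$ intersected with $\{h>K\}$.
\end{enumerate}
Throughout I use the superlinear bound $\ell(r)\ge c(\beta)|r-1|$ (for $r\le1-\beta$ also $\ell(r)\ge c(\beta)>0$), so $\nu_T(\{|\varphi-1|\ge\beta\})\le C(\beta)Ma^2(\e)$ and, by Lemma \ref{lemma1}, $\int|\varphi-1|1_{\{|\varphi-1|\ge\beta\}}\le M a^2(\e)\kappa(\beta)$.

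On (a) and (b) $h$ is controlled: by Cauchy--Schwarz $\int h1_A\le \|h\|_{L^2}\nu_T(A)^{1/2}$ on (a), while $h\le K$ on (b). After dividing by $a(\e)$ these give bounds of order $a(\e)^{1-\zeta}$ or better. Piece (c) is the genuine obstacle, since there both $h$ and $\varphi$ are large.

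For (c) I would use the Young/entropy inequality $ab\le e^{\d b}+\frac{1}{\d}\ell(a)$-type bound, namely $\varphi h\le \sigma^{-1}\ell(\varphi)+C e^{\sigma h}$, applied with $\sigma$ of size $\d h$, i.e. using $e^{\d h^2}$. The set $\{h>K\}$ has finite $\nu$-measure because $h\in L^2$, and $\int_{\{h>K\}}e^{\d h^2}\,d\nu\to0$ as $K\to\infty$ by $h\in\mathcal H^{\d}$. Choosing $K=K(\e)\to\infty$ slowly then yields the term $\theta(\e)$.

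Since the lemma is taken from \cite{BDG}, the proof itself is simply the two-line deduction from \eqref{l2est1} above.
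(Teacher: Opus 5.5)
The paper does not prove this lemma; it simply imports it from \cite{BDG}, next to Lemma~\ref{lemma2}. You instead read it off from Lemma~\ref{lemma2}. With $I=[0,T]$ and any fixed $\zeta\in(0,1)$, \eqref{l2est1} gives the bound $\Gamma_h(\beta)a^{1-\zeta}(\epsilon)+(1+T)\theta(\epsilon)$ uniformly over $S^M_\epsilon$. For fixed $\beta$ this tends to $0$, because $a(\epsilon)\to 0$ and $\theta(\epsilon)\to 0$. As the two lemmas are stated in the paper, this deduction is correct and is the most economical route.

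Be aware, though, that it adds no content of its own. The $\epsilon$-decay asserted in Lemma~\ref{lemma3} is exactly the $a^{1-\zeta}(\epsilon)$ and $\theta(\epsilon)$ terms of \eqref{l2est1}, which is itself only quoted. The only genuinely self-contained part of your proposal is therefore the sketch, and it needs repair in two places.

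\emph{Piece (a).} Since $\nu_T(\{\varphi\le 1-\beta\})\le Ma^2(\epsilon)/\ell(1-\beta)$, your Cauchy--Schwarz bound gives
$\frac{1}{a(\epsilon)}\int_{\{\varphi\le 1-\beta\}}h\,d\nu_T\le \|h\|_{L^2(\nu_T)}\,(M/\ell(1-\beta))^{1/2}$.
This is bounded but not small, so your claim that (a) is of order $a^{1-\zeta}(\epsilon)$ does not follow. You must truncate $h$ on (a) as well. The part with $h\le K$ is $O(Ka(\epsilon))$, and the part with $h>K$ is $O(\|h1_{\{h>K\}}\|_{L^2(\nu)})$.

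\emph{Piece (c).} The prefactor $1/a(\epsilon)$ also has to be absorbed, and \emph{choosing $K$ slowly} does not ensure this if you integrate $e^{\delta h^2}$ over all of $\{h>K\}\times[0,T]$. That leaves the term $\frac{T}{\delta K a(\epsilon)}\int_{\{h>K\}}e^{\delta h^2}\,d\nu$. A clean fix is the pointwise bound $h\varphi\le \frac{2}{\delta h}(e^{\delta h^2/2}+\ell(\varphi))$, followed by Cauchy--Schwarz on $\{\varphi\ge 1+\beta\}$, whose $\nu_T$-measure is $O(a^2(\epsilon))$. Since $\int_{\{h>1\}}e^{\delta h^2}\,d\nu<\infty$ (because $\nu(\{h>1\})<\infty$), this bounds piece (c) by $C/K$ uniformly in $\epsilon$. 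Piece (b) is at most $CKa(\epsilon)$, so the choice $K=a(\epsilon)^{-1/2}$ makes all three pieces vanish.
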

The next Lemma is a particular case of Lemma 4.5 in \cite{BDG}.
\begin{lemma}\label{lemma4}
Let $h \in L^2(\l) \cap \mathcal{H}^{\d}$. Fix any $M > 0$. Then, there exists a constant $k_h > 0$ such that 
for any measurable set $I$ subset of $[0, T]$ and for all $\e > 0$, 
\[
\sup_{\varphi \in S_{+, \e}^M} \int_{\mathbb Z \times I}  h^2(z)\varphi(z, s)\nu(dz)ds \le k_h(\delta, M, |I|)a^2(\e) + M\delta^{-1}| I |.
\]
\end{lemma}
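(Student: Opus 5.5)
The plan is a pointwise convex-duality (Young-type) estimate combined with a splitting of $\mathbb Z\times I$ according to the size of $\varphi$ and of $h$. Every contribution will be sent either to a term of order $a^2(\e)$ with constant depending on $(h,\delta,M,|I|)$, or to a term linear in $|I|$. Three facts are used throughout. First, for $\varphi\in\mathcal S^M_{+,\e}$ we have $\int_{\mathbb Z\times[0,T]}\ell(\varphi)\,\nu(dz)ds\le Ma^2(\e)$. Second, for $x,y\ge0$ one has $xy\le e^{x}-1+\ell(y)$. Third, $h\in L^2(\nu)\cap\mathcal H^\delta$ implies that $\{|h|>1\}$ has finite $\nu$-measure, that $e^{\delta h^2}$ is $\nu$-integrable there, and hence that $h^4\in L^1(\nu)$ (use $h^4\le h^2$ on $\{|h|\le 1\}$).

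I would start by writing $h^2\varphi=h^2+h^2(\varphi-1)$ and splitting $\mathbb Z\times I$ into $A_1=\{\varphi\le \beta\}$ and $A_2=\{\varphi>\beta\}$, with a fixed $\beta>1$ (say $\beta=e^2$). The pure term gives exactly $|I|\,\|h\|^2_{L^2(\nu)}$.

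On $A_1$, use $\ell(y)\ge c_\beta(y-1)^2$ for $y\in[0,\beta]$. Then $2ab\le \eta a^2+\eta^{-1}b^2$ gives
\[
\int_{A_1}h^2|\varphi-1|\le \tfrac{\eta}{2}|I|\,\|h\|^4_{L^4(\nu)}+\tfrac{M}{2\eta c_\beta}a^2(\e).
\]

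On $A_2$, split further according to whether $h^2\le R$ or $h^2>R$. Where $h^2\le R$, part 2 of Lemma \ref{lemma1} gives a bound $R\,M\kappa_1'(\beta)a^2(\e)$. Where $h^2>R$, apply the Young inequality with $x=\delta h^2$ and $y=\varphi$:
\[
h^2\varphi\le \delta^{-1}\bigl(e^{\delta h^2}1_{\{h^2>R\}}+\ell(\varphi)\bigr).
\]
This contributes at most $\delta^{-1}|I|\int_{\{h^2>R\}}e^{\delta h^2}d\nu+\delta^{-1}Ma^2(\e)$, and the first integral tends to $0$ as $R\to\infty$. Collecting everything, we get
\[
\sup_{\varphi}\int_{\mathbb Z\times I}h^2\varphi\le k_h\,a^2(\e)+|I|\Bigl(\|h\|_2^2+\tfrac{\eta}{2}\|h\|_4^4+\delta^{-1}\!\!\int_{\{h^2>R\}}e^{\delta h^2}d\nu\Bigr),
\]
where $k_h$ collects the $\eta,\beta,R,\delta,M$-dependent constants.

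The main obstacle is the final step: matching the coefficient of $|I|$ to $M\delta^{-1}$. The $\eta$- and $R$-terms can be made as small as desired, at the price of enlarging $k_h$. The term $\|h\|_2^2|I|$, however, comes from $\varphi\equiv1$, which lies in every $\mathcal S^M_{+,\e}$. So the stated form can only hold if $M\delta^{-1}$ exceeds $\|h\|_2^2$. I would obtain this from the freedom in $\delta$: since $\mathcal H^\delta\subset\mathcal H^{\delta'}$ for $\delta'<\delta$, we may replace $\delta$ by $\delta'\le M/(2\|h\|_2^2)$ and rerun the argument with $\delta'$. We then choose $\eta$ and $R$ so that the remaining $|I|$-terms are at most $M/(2\delta')$. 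This yields the stated inequality with $\delta'$ in place of $\delta$.

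So the $\delta$ in the conclusion has to be read as a sufficiently small admissible exponent, which I expect is the intended sense of ``a particular case of Lemma 4.5 in \cite{BDG}''. Checking this reading against that source is the point I am least sure of.
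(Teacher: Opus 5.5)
Your diagnosis is right. Since $\varphi\equiv 1$ lies in every $\mathcal{S}^M_{+,\epsilon}$, the left side is at least $|I|\,\|h\|^2_{L^2(\nu)}$. Because $k_h$ does not depend on $\epsilon$, letting $\epsilon\to 0$ forces $\|h\|^2_{L^2(\nu)}\le M/\delta$. Any $h=c\,1_K$ with $0<\nu(K)<\infty$ and $c^2\nu(K)>M/\delta$ lies in $L^2(\nu)\cap\mathcal{H}^\delta$ and violates this, so the inequality is false in general at the given $\delta$.

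The paper does not address this, since it gives no argument of its own, only the pointer to Lemma 4.5 of \cite{BDG}. Your corrected statement, with $\delta$ replaced by some $\delta'\le\min\{\delta,M/(2\|h\|_2^2)\}$, is proved correctly. The steps check out: $\ell(y)\ge (y-1)^2/(2\beta)$ on $[0,\beta]$, part 2 of Lemma \ref{lemma1}, Fenchel--Young on $\{h^2>R\}$, and $h^4\in L^1(\nu)$.

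The corrected version is also all the paper needs. In \eqref{uept4}, \eqref{uept5}, \eqref{Yep2}, \eqref{Yep3} only an $\epsilon$-uniform finite bound is used. The exponent there is at the authors' disposal, because H.8 is inherited by smaller exponents. When bounding $\int M_g^2|\varphi_\epsilon-1|$, the authors add the $M_2^2t$ term separately anyway, which fits the alternative fix of keeping $\delta$ and adding $(\|h\|_2^2+\eta)|I|$ on the right.

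Since there is no proof in the paper to compare with, the useful comparison is with the direct route, which is much shorter. Apply $xy\le e^x-1+\ell(y)$ globally with $x=\delta'h^2$ and $y=\varphi$, keeping the $-1$. For every $\delta'\in(0,\delta]$ and $\varphi\in\mathcal{S}^M_{+,\epsilon}$ this gives
\[
\int_{\mathbb{Z}\times I}h^2\varphi\,\nu(dz)\,ds\;\le\;\frac{1}{\delta'}\Bigl(|I|\int_{\mathbb{Z}}\bigl(e^{\delta' h^2}-1\bigr)\,d\nu+M\,a^2(\epsilon)\Bigr).
\]
Set $c(\delta'):=\int_{\mathbb{Z}}(e^{\delta'h^2}-1)\,d\nu$. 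It is finite: on $\{h^2\le 1\}$ the integrand is at most $\delta'e^{\delta'}h^2$, and $\{h^2>1\}$ has finite $\nu$-measure. It also tends to $0$ as $\delta'\downarrow 0$ by dominated convergence. So once $c(\delta')\le M$, you get the stated inequality with $\delta'$ in place of $\delta$ and $k_h=M/\delta'$.

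This needs no splitting into $A_1,A_2$, no $L^4$ bound, and no appeal to Lemma \ref{lemma1}. The splitting only becomes necessary once the $-1$ is dropped: $e^{\delta h^2}$ is not $\nu$-integrable when $\nu(\mathbb{Z})=\infty$, but $e^{\delta h^2}-1$ is.

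What your longer route buys is sharper information at a \emph{fixed} exponent. The coefficient of $|I|$ can be brought down to $\|h\|_2^2+\eta$ for any $\eta>0$, which is optimal in view of $\varphi\equiv 1$. The one-line bound gives $\delta^{-1}c(\delta)$ instead, which decreases to $\|h\|_2^2$ only as $\delta\downarrow 0$, at the cost of a blowing-up $a^2(\epsilon)$ coefficient.
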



\end{document}